\renewcommand{\subsection}{\@startsection{subsection}{2}%
  \z@{.5\linespacing\@plus.7\linespacing}{-.5em}%
  {\normalfont\bfseries\boldmath}}
        \theoremstyle{plain} 
        \newtheorem{theorem}             {Theorem}  [section]
        \newaliascnt{lemma}{theorem}
        \newtheorem{lemma} [lemma]{Lemma}
        \newaliascnt{corollary}{theorem}
        \newtheorem{corollary}  [corollary]{Corollary}
        \theoremstyle{definition}
        \newaliascnt{definition}{theorem}
        \newtheorem{definition} [definition]{Definition}
        \newaliascnt{example}{theorem}
        \newtheorem{example} [example]{Example}
        \theoremstyle{remark}
        \newtheorem{remark}              [theorem]{Remark}
        \newtheorem*{acknowledgment}      {Acknowledgments}
\begin{document}
\title[Symplectic groupoids and discrete mechanics]{Symplectic
  groupoids and discrete constrained Lagrangian mechanics}

\author[J.~C.~Marrero]{Juan Carlos Marrero}
\address{Unidad Asociada
  ULL--CSIC ``Geometr{\'\i}a Diferencial y
  Mec\'anica Geom\'etrica''\\
  Departamento de Matem\'atica Fundamental\\
  Facultad {de} Matem\'aticas\\
  Universidad de La Laguna\\
  La Laguna, Tenerife, Canary Islands, Spain}
\email{jcmarrer@ull.es}

\author[D.~Mart\'in de Diego]{David Mart\'in de Diego}
\address{Instituto de Ciencias Matem\'aticas (CSIC--UAM--UC3M--UCM)\\
Campus de Cantoblanco, UAM\\
C/ Nicolas Cabrera 15\\ 28049 Madrid, Spain}
\email{david.martin@icmat.es}

\author[A.~Stern]{Ari Stern}
\address{Department of Mathematics\\
Washington University in St.~Louis\\
Campus Box 1146\\
One Brookings Drive\\
St.~Louis, Missouri 63130-4899, USA}
\email{astern@math.wustl.edu}

\begin{abstract}
  In this article, we generalize the theory of discrete Lagrangian
  mechanics and variational integrators in two principal directions.
  First, we show that Lagrangian submanifolds of symplectic groupoids
  give rise to discrete dynamical systems, and we study the properties
  of these systems, including their regularity and reversibility, from
  the perspective of symplectic and Poisson geometry.  Next, we use
  this framework---along with a generalized notion of generating
  function due to \citeauthor{SnTu1972}---to develop a theory of
  \emph{discrete constrained Lagrangian mechanics}.  This allows for
  systems with arbitrary constraints, including those which are
  non-integrable (in an appropriate discrete, variational sense).  In
  addition to characterizing the dynamics of these constrained
  systems, we also develop a theory of reduction and Noether
  symmetries, and study the relationship between the dynamics and
  variational principles.  Finally, we apply this theory to discretize
  several concrete examples of constrained systems in mechanics and
  optimal control.
\end{abstract}

\date{January 1, 2014}

\subjclass[2010]{Primary 70G45; Secondary 53D17, 37M15}

\maketitle

\section{Introduction}

\setlength{\epigraphwidth}{.6\textwidth}
\epigraph{Understand[ing] in depth the symplectic geometry of the
  generating function method \ldots should lead to a method for
  nonholonomic constraints.}{McLachlan and Scovel, \emph{A survey of
    open problems in symplectic integration} \citep{McSc1996}}

In this article, we generalize the theory of discrete Lagrangian
mechanics and variational integrators in two principal directions.
First, we show that Lagrangian submanifolds of symplectic groupoids
give rise to discrete dynamical systems, and we study the properties
of these systems, including their regularity and reversibility, from
the perspective of symplectic and Poisson geometry.  Next, we use this
framework---along with a generalized notion of generating function due
to \citet{SnTu1972}---to develop a theory of \emph{discrete
  constrained Lagrangian mechanics}.  This allows for systems with
arbitrary constraints, including those which are non-integrable (in an
appropriate discrete, variational sense).  In addition to
characterizing the dynamics of these constrained systems, we also
develop a theory of reduction and Noether symmetries, and study the
relationship between the dynamics and variational principles.
Finally, we apply this theory to discretize several concrete examples
of constrained systems in mechanics and optimal control.

Before giving a more in-depth overview of the paper, and summarizing
the key results, let us first give a brief background on discrete
Lagrangian mechanics, in order to provide the context for the present
work.

\subsection{Background}
The subject of \emph{discrete Lagrangian mechanics} concerns the study
of certain discrete dynamical systems on manifolds.  As the name
suggests, these discrete systems exhibit many geometric features which
are analogous to those in continuous Lagrangian mechanics: in
particular, the dynamics of these systems satisfy variational
principles, have symplectic or Poisson flow maps, conserve momentum
maps associated to Noether-type symmetries, and admit a theory of
reduction.  While discrete Lagrangian systems are quite mathematically
interesting, in their own right, they also have important applications
to structure-preserving numerical simulation of dynamical systems in
geometric mechanics and optimal control theory.

In the simplest form of discrete Lagrangian mechanics, one begins with
a function $ L \colon Q \times Q \rightarrow \mathbb{R} $, called the
\emph{discrete Lagrangian}, where $Q$ is some configuration manifold.
A trajectory $ q _0, \ldots, q _n \in Q $ is a solution of the system
if it extremizes the \emph{discrete action sum},
\begin{equation*}
  S \left( q _0, \ldots, q _n \right) = \sum _{ k = 1 } ^n L \left( q
    _{k-1} , q _k \right) ,
\end{equation*} 
where the endpoints $ q _0 $ and $ q _n $ are held fixed.  This is
essentially a discrete version of Hamilton's principle of stationary
action, where the tangent bundle $ T Q $ has been replaced by $ Q
\times Q $, and the action integral has been replaced by an action
sum.  Solving this variational principle, one obtains the
\emph{discrete Euler--Lagrange equations},
\begin{equation*}
  0 = \frac{ \partial }{ \partial q _k } S \left( q _0, \ldots, q _n
  \right) =  \partial _0 L \left( q _k , q _{ k + 1 } \right) + \partial _1 L
  \left( q _{ k - 1 } , q _k \right) ,
\end{equation*}
for $ k = 1 , \ldots, n - 1$, where $ \partial _0 L $ and $ \partial
_1 L $ denote the partial derivatives of $L$ with respect to the first
and second arguments, respectively.  These discrete Euler--Lagrange
equations define an implicit function (i.e., a relation), mapping $
\left( q _{ k - 1 } , q _k \right) \mapsto \left( q _k , q _{ k + 1 }
\right) $. If $L$ is sufficiently nondegenerate, then this relation is
the graph of a flow map $ Q \times Q \rightarrow Q \times Q $, at
least locally.

Alternatively, but equivalently, one can define a pair of
\emph{discrete Legendre transformations} $ \mathbb{F} ^{ \pm } L
\colon Q \times Q \rightarrow T ^\ast Q $, given by
\begin{equation*}
  \mathbb{F}  ^- L \left( q _{ k - 1 }, q _k \right) = - \partial _0 L
  \left( q _{ k - 1 }, q _k \right), \qquad   \mathbb{F}  ^+ L \left( q
    _{ k - 1 }, q _k \right) = \partial _1 L \left( q _{ k - 1 }, q _k
  \right).
\end{equation*} 
This defines a symplectic relation on $ T ^\ast Q $, given by
\begin{equation*}
  p _{k-1} =   \mathbb{F}  ^- L \left( q _{ k - 1 }, q _k \right),
  \qquad p _k = \mathbb{F}  ^+ L \left( q _{ k - 1 }, q _k \right),
\end{equation*} 
so if the discrete Legendre transforms are (locally) invertible, then
this gives a (local) symplectic automorphism
\begin{equation*}
  \mathbb{F}  ^+ L \circ \left( \mathbb{F}  ^- L \right) ^{-1} \colon
  T ^\ast Q \rightarrow T^\ast Q , \quad \left( q _{ k -1 } , p _{ k
      -1 } \right) \mapsto \left( q _k , p _k \right) ,
\end{equation*} 
which can be interpreted as the discrete Hamiltonian flow of the
system.  More precisely, if $ \omega $ denotes the canonical
symplectic form on $ T^\ast Q $, then $ L \colon Q \times Q
\rightarrow \mathbb{R} $ is a generating function for the Lagrangian
submanifold $ \mathrm{d} L \left( Q \times Q \right) \subset \left( T
  ^\ast Q, - \omega \right) \times \left( T ^\ast Q , \omega \right)
$, which can be identified with the graph of the discrete Hamiltonian
flow in the nondegenerate case.  Observe that 
\begin{equation*}
  \mathbb{F} ^+ L \left(
    q _{ k - 1 }, q _k \right) = p _k = \mathbb{F} ^- L \left( q _k , q
    _{ k + 1 } \right) \Leftrightarrow \partial _0 L \left( q _k , q
    _{ k + 1 } \right) + \partial _1 L  \left( q _{ k - 1 } , q _k
  \right) = 0,
\end{equation*} 
so this approach is equivalent to the previous one.

Numerical methods which are constructed in this fashion are called
\emph{variational integrators}, due to the key role played by the
variational principle. This approach to discretizing Lagrangian
systems was put forward in seminal papers by \citet{Suris1990},
\citet{MoVe1991}, and others in the early 1990s, and the general
theory was developed over the subsequent decade (see \citet{MaWe2001}
for a comprehensive overview).

\citet{Weinstein1996} observed that these systems could be understood
as a special case of a more general theory, describing discrete
Lagrangian mechanics on arbitrary Lie groupoids.  Given a Lie groupoid
$ G \rightrightarrows Q $ and a discrete Lagrangian $ L \colon G
\rightarrow \mathbb{R} $, a composable sequence of elements $ g _1,
\ldots, g _n \in G $ is a solution trajectory if it extremizes the
discrete action sum 
\begin{equation*}
  S \left( g _1, \ldots, g _n \right) = \sum _{ k = 1 } ^n L \left( g
    _k \right) ,
\end{equation*} 
where the product $ g _1 \cdots g _n = g $ is held fixed.  This
variational principle yields a discrete Lagrangian flow map $ g _k
\mapsto g _{ k + 1 } $.  Equivalently, $L$ can be interpreted as a
generating function for the Lagrangian submanifold $ \mathrm{d} L (G)
\subset T ^\ast G $.  Since $ T ^\ast G $ is a symplectic groupoid
over the dual Lie algebroid $ A ^\ast G $, which has a canonical
Poisson structure, this Lagrangian submanifold defines a Poisson
relation on $ A ^\ast G $.  In the special case of the so-called pair
groupoid $ Q \times Q \rightrightarrows Q $, this recovers the
previous theory.  We will recall the details of this approach in
\autoref{sec:groupoids}, along with some of the more recent
developments that appeared in \citet{MaMaMa2006}, including an
explicit characterization of the discrete Lagrangian and Hamiltonian
dynamics.

\subsection{Organization of the paper}
In \autoref{sec:groupoids}, we begin by recalling the basic
definitions and properties of Lie groupoids and algebroids,
highlighting the close relationship between symplectic groupoids and
Poisson manifolds.  After laying these foundations, we next describe
how Lagrangian submanifolds of symplectic groupoids give rise to
discrete dynamics, and how this can be seen as an abstract
generalization of discrete Lagrangian mechanics.  The main result of
this section, \autoref{thm:lagrangianBisection}, is a significant
generalization of earlier results characterizing the regularity and
reversibility of discrete Lagrangian systems.  We prove, in a
generalized sense, that if either of the discrete Legendre transforms
is a (local) diffeomorphism, then both are, and hence the discrete
flow is a (local) Poisson automorphism.

Next, in \autoref{sec:constraints}, we develop a theory of discrete
constrained Lagrangian mechanics, for systems which are restricted to
some constraint submanifold of a Lie groupoid.  In this case, rather
than having a discrete Lagrangian defined on the entire groupoid, we
only have $ L \colon N \rightarrow \mathbb{R} $ for some $ N \subset G
$.  When $N$ is a proper submanifold, this cannot be a generating
function in the usual sense; however, we show that a more general
notion of generating function, due to \citet{SnTu1972}, can be used to
generate a Lagrangian submanifold $ \Sigma _L \subset T ^\ast G $.
Using this approach, we characterize the regularity of discrete
constrained Lagrangian systems, and obtain explicit formulas for the
discrete Legendre transforms and discrete Euler--Lagrange equations.
Finally, after showing that $ \Sigma _L \subset T ^\ast G $ has the
structure of an affine bundle over $N$ associated to the conormal
bundle $ \nu ^\ast N $, we discuss how the dynamics on $ \Sigma _L $
can thus be interpreted as the evolution of configurations (on the
base) and Lagrange multipliers (on the fibers).

In \autoref{sec:morphisms}, we introduce morphisms of discrete
constrained Lagrangian systems, and use this to develop the reduction
theory for these systems.  The notion of Noether symmetries is also
introduced, and we prove the appropriate discrete version of Noether's
theorem, relating symmetries to conserved quantities.

Subsequently, in \autoref{sec:variational}, we show how the dynamics
of discrete constrained Lagrangian systems can be derived from a
constrained variational principle on $N$, and demonstrate that these
dynamics are equivalent to those obtained previously via the
generating function approach.

After this, in \autoref{sec:examples}, we examine examples of discrete
constrained Lagrangian systems for several applications.  These
applications include constrained mechanics and optimal control on Lie
groups, an optimal control problem with non-integrable constraints
known as the plate-ball system, and time-dependent constrained
mechanics with either fixed or adaptive time-stepping.

Throughout the paper, we have occasion to draw on certain technical
results involving bisections of Lie groupoids, which are also
instrumental to the multiplicative structure of the cotangent groupoid
and to the proof of \autoref{thm:variationalIsomorphism}.  We provide
some supplementary details and discussion of these results in
\autoref{app:bisections}.

\section{Symplectic groupoids and discrete Poisson dynamics}
\label{sec:groupoids}

\subsection{Lie groupoids and algebroids} Let us first briefly recall
some definitions pertaining to Lie groupoids and Lie algebroids.  This
will provide an abbreviated reference and fix the notation used
throughout the remainder of the paper.  For a detailed treatment of
this rich topic, see the comprehensive work of \citet{Mackenzie2005}.

\begin{definition}
  A \emph{groupoid} is a small category in which every morphism is
  invertible.  That is, the groupoid denoted $ G \rightrightarrows Q $
  consists of a set of objects $Q$, a set of morphisms $G$, and the
  following structural maps:
  \begin{enumerate}
  \item a \emph{source map} $ \alpha \colon G \rightarrow Q $ and
    \emph{target map} $ \beta \colon G \rightarrow Q $;
  \item a \emph{multiplication map} $ m \colon G _2 \rightarrow G $, $
    \left( g, h \right) \mapsto gh $, where
    \begin{equation*}
      G _2 = G \mathrel{_\beta \times _\alpha } G = \left\{ \left( g,
          h \right) \in G \times G \;\middle\vert\; \beta (g) = \alpha
        (h) \right\} 
    \end{equation*}
    is called the set of \emph{composable pairs}, such that
    multiplication is associative whenever defined;
  \item an \emph{identity section} $ \epsilon \colon Q \rightarrow G $
    of $\alpha$ and $\beta$, such that for all $ g \in G $,
    \begin{equation*}
      \epsilon \left( \alpha (g) \right) g = g = g \epsilon \left(
        \beta (g) \right) ;
    \end{equation*}
  \item and an \emph{inversion map} $ i \colon G \rightarrow G $, $ g
    \mapsto g ^{-1} $, such that for all $ g \in G $,
    \begin{equation*}
      g g ^{-1} = \epsilon \left( \alpha (g) \right) , \qquad g ^{-1} g
      = \epsilon \left( \beta (g) \right) .
    \end{equation*}
  \end{enumerate}
\end{definition}

\begin{remark}
  Intuitively, groupoids can be viewed as either ``categories with
  extra structure'' or as ``groups with missing structure,'' and both
  perspectives are useful.  For instance, it can be helpful to see $ g
  \in G $ as an ``arrow'' from $ \alpha (g) $ to $ \beta (g) $, where
  the multiplicative structure defines the composition of arrows $
  \xymatrix{\bullet \ar@/^/[r]_g \ar@/^4ex/[rr]_{ g h } & \bullet
    \ar@/^/[r] _h & \bullet} $.  Alternatively, the groupoid $G$ can
  be thought of as being a weaker version of a group, where
  multiplication is defined only partially (on $ G _2 \subset G \times
  G $) rather than totally (on $ G \times G $).
\end{remark}

Next, for any element $ g \in G $ of a groupoid, there are associated
left and right translation maps, which act, respectively, on those
elements with which $g$ is composable on the left and on the right.

\begin{definition}
  Given a groupoid $ G \rightrightarrows Q $ and an element $ g \in G
  $, define the \emph{left translation} $ \ell _g \colon \alpha ^{-1}
  \left( \beta (g) \right) \rightarrow \alpha ^{-1} \left( \alpha (g)
  \right) $ and \emph{right translation} $ r _g \colon \beta ^{-1}
  \left( \alpha (g) \right) \rightarrow \beta ^{-1} \left( \beta (g)
  \right) $ by $g$ to be
  \begin{equation*}
    \ell _g (h) = g h , \qquad r _g (h) = h g .
  \end{equation*}
\end{definition}

For discrete mechanics, we will focus on a particular class of
groupoids, Lie groupoids, which---much like Lie groups, of which they
are a generalization---have a differential structure in addition to
(and compatible with) their algebraic structure.

\begin{definition}
  A \emph{Lie groupoid} is a groupoid $ G \rightrightarrows Q $ where
  $G$ and $Q$ are differentiable manifolds, $\alpha$ and $\beta$ are
  submersions, and the multiplication map $ m $ is differentiable.
\end{definition}

\begin{remark}
  Observe that it is necessary for $\alpha$ and $\beta$ to be
  submersions so that $ G _2 $ is a differentiable manifold;
  otherwise, one could not make sense of $m$ as a differentiable map.
  It also follows from the definition that $m$ is a submersion,
  $\epsilon$ is an immersion, and $i$ is a diffeomorphism.
\end{remark}

In a natural way, one can now introduce the notion of left- and
right-invariant vector fields on a Lie groupoid.

\begin{definition}
  Given a Lie groupoid $ G \rightrightarrows Q $, a vector field $ \xi
  \in \mathfrak{X} (G) $ is \emph{left-invariant} if it is
  $\alpha$-vertical (i.e., $ T \alpha (\xi) = 0 $) and $ T _h \ell _g
  \left( \xi (h) \right) = \xi \left( g h \right) $ for all $ \left(
    g, h \right) \in G _2 $.  Similarly, $\xi$ is
  \emph{right-invariant} if it is $\beta$-vertical (i.e., $ T \beta
  (\xi) = 0 $), and $ T _h r _g \left( \xi (h) \right) = \xi \left( h
    g \right) $ for all $ \left( h, g \right) \in G _2 $.
\end{definition}

Just as the ``infinitesimal version'' of a Lie group is a Lie algebra,
the infinitesimal version of a Lie groupoid is a Lie algebroid.  We
will first define Lie algebroids as abstract structures, in their own
right, before subsequently recalling the definition of the Lie
algebroid associated to a particular Lie groupoid.

\begin{definition}
  A \emph{Lie algebroid} is a real vector bundle $ A \rightarrow Q $,
  equipped with a Lie bracket $ \left\llbracket \cdot , \cdot
  \right\rrbracket $ on its space of sections $ \Gamma (A) $, and with
  a bundle map $ \rho \colon A \rightarrow T Q $ called the
  \emph{anchor map}.  Furthermore, if we also denote by $ \rho \colon
  \Gamma (A) \rightarrow \mathfrak{X} (Q) $ the homomorphism of $ C
  ^\infty (Q) $-modules of sections induced by the anchor, then we
  require that this satisfies the ``Leibniz rule,''
  \begin{equation*}
    \left\llbracket X, f Y \right\rrbracket = f\left\llbracket X,
      Y \right\rrbracket + \rho (X) \left[ f \right] Y ,
  \end{equation*}
  for all $ X, Y \in \Gamma (A) $ and $ f \in C ^\infty (Q) $.
\end{definition}

\begin{remark}
  Using the Jacobi identity, it is not hard to show that the induced
  map $ \rho \colon \Gamma (A) \rightarrow \mathfrak{X} (Q) $ is a Lie
  algebra homomorphism, where $ \mathfrak{X} (Q) $ is endowed with the
  usual Jacobi--Lie bracket $ \left[ \cdot, \cdot \right] $ for vector
  fields.
\end{remark}

\begin{definition}
  Given a Lie groupoid $ G \rightrightarrows Q $, the \emph{associated
    Lie algebroid} $ A G \rightarrow Q $ is defined by its fibers $ A
  _q G = \ker T _{ \epsilon (q) } \alpha $, i.e., the space of
  $\alpha$-vertical tangent vectors at the identity section, for each
  $ q \in Q $.  There is a bijection between sections $ X \in \Gamma
  \left( A G \right) $ and left-invariant vector fields $
  \overleftarrow{ X } \in \mathfrak{X} (G) $, defined by
  \begin{equation}\label{eqn:leftInvariant}
    \overleftarrow{ X } (g) =  T _{ \epsilon \left( \beta (g)
        \right) } \ell _g \left( X \left( \beta (g) \right) \right) .
  \end{equation}
  The Lie algebroid structure on $ A G $ is then given by the bracket
  $ \left\llbracket \cdot , \cdot \right\rrbracket $ and anchor $\rho$
  satisfying the conditions
  \begin{equation*}
    \overleftarrow{ \left\llbracket X , Y \right\rrbracket } = [
    \overleftarrow{ X } , \overleftarrow{ Y } ], \qquad \rho (X) (q) =
    T _{ \epsilon (q) } \beta \left( X (q) \right) ,
  \end{equation*}
  for all $ X, Y \in \Gamma \left( A G \right) $ and $ q \in Q $.
\end{definition}

\begin{remark}
  Alternatively, one can also establish a bijection between sections $
  X \in \Gamma \left( A G \right) $ and right-invariant vector fields
  $ \overrightarrow{ X } \in \mathfrak{X} (Q) $, defined by
  \begin{equation}\label{eqn:rightInvariant}
    \overrightarrow{ X } (g) = - T _{ \epsilon \left( \alpha (g) \right)
    } \left( r _g \circ i \right) \left( X \left( \alpha (g) \right)
    \right) ,
  \end{equation}
  which yields the Lie bracket relation
  \begin{equation*}
    \overrightarrow{ \left\llbracket X, Y \right\rrbracket } = - [
    \overrightarrow{ X } , \overrightarrow{ Y } ] .
  \end{equation*}
  Thus, $ X \mapsto \overleftarrow{ X } $ is a Lie algebra
  isomorphism, while $ X \mapsto \overrightarrow{ X } $ is a Lie
  algebra anti-isomorphism.

  Note that for every $ v \in A _q G $, we have the following
  expression for the tangent to the inversion map:
  \begin{equation}
    \label{eqn:tangentInversion}
    T _{ \epsilon (q) } i (v) = - v + T _{ \epsilon (q) } \left(
      \epsilon \circ \beta \right) (v) .
  \end{equation}
  This can be seen by taking $ v = \dot{ g } (0) $ for some
  $\alpha$-vertical path $ g (t) $, and observing that $ T _{ \left(
      \epsilon (q), \epsilon (q) \right) } m \left( 0, v \right) =
  \frac{\mathrm{d}}{\mathrm{d}t} m \left( \epsilon (q), g (t) \right)
  \rvert _{ t = 0 } = \dot{ g } (0) = v $; likewise, for the
  $\beta$-vertical path $ g ^{-1} (t) $, we have $ T _{ \left(
      \epsilon (q) , \epsilon (q) \right) } m \left( T _{ \epsilon (q)
    } i (v), 0 \right) = T _{ \epsilon (q) } i (v) $.  Therefore, $ T
  _{ \left( \epsilon (q), \epsilon (q) \right) } m \left( T _{
      \epsilon (q) } i (v), v \right) = v + T _{ \epsilon (q) } i (v)
  $---but since $ m \left( g ^{-1} (t) , g (t) \right) = \epsilon
  \left( \beta (g) \right) $, we obtain $ v + T _{ \epsilon (q) } i
  (v) = T _{ \epsilon (q) } \left( \epsilon \circ \beta \right) (v) $,
  which rearranges to give the desired equality.
\end{remark}

Finally, we discuss a subclass of Lie groupoids having even more
additional structure.  These are the symplectic groupoids, which are
endowed with a symplectic manifold structure that is ``compatible''
with the Lie groupoid structure, in a sense which we will now define
precisely.

\begin{definition}
  A \emph{symplectic groupoid} is a Lie groupoid $ \widetilde{ G }
  \rightrightarrows P $, such that
  \begin{enumerate}
  \item $ ( \widetilde{ G } , \widetilde{ \omega }
    ) $ is a symplectic manifold,
  \item the graph of $ \widetilde{ m } \colon \widetilde{ G } _2
    \rightarrow \widetilde{ G } $ is a Lagrangian submanifold of $
    \widetilde{ G } ^- \times \widetilde{ G } ^- \times \widetilde{ G
    } $, where $ \widetilde{ G } ^- = ( \widetilde{ G }, - \widetilde{
      \omega } ) $ denotes the negative symplectic structure.
  \end{enumerate}
\end{definition}

If $ \widetilde{ G } \rightrightarrows P $ is a symplectic groupoid
and $ \widetilde{ \omega } $ is the symplectic $2$-form on $
\widetilde{ G } $, then one may prove that $ (\ker T _\mu \widetilde{
  \alpha }) ^{ \widetilde{ \omega } } = \ker T _\mu \widetilde{ \beta
} $ for $ \mu \in \widetilde{ G } $, where $ (\ker T _\mu \widetilde{
  \alpha }) ^{ \widetilde{ \omega } } $ denotes the symplectic
orthogonal complement of the subspace $ \ker T _\mu \widetilde{ \alpha
} $.  That is, the subspaces of $ \widetilde{ \alpha } $-vertical and
$ \widetilde{ \beta } $-vertical tangent vectors are symplectic
orthogonal to one another.  Moreover, there exists a unique Poisson
structure on $P$ such that $ \widetilde{ \alpha } \colon \widetilde{ G
} \rightarrow P $ is anti-Poisson and $ \widetilde{ \beta } \colon
\widetilde{ G } \rightarrow P $ is Poisson.  In addition, the identity
section $ \widetilde{ \epsilon } \colon P \rightarrow \widetilde{ G }
$ is a Lagrangian immersion. (See
\citet{CoDaWe1987,Mackenzie2005,Marle2005}.)

One example of a symplectic groupoid, in particular, lies at the heart
of discrete Lagrangian mechanics: this is the cotangent groupoid.
Given a Lie groupoid $ G \rightrightarrows Q $, let $ A ^\ast G
\rightarrow Q $ be the dual vector bundle of the associated Lie
algebroid $ A G $; then the \emph{cotangent groupoid} is a symplectic
groupoid $ T ^\ast G \rightrightarrows A ^\ast G $.  Given an element
$ \mu \in T ^\ast _g G $, the source and target are defined such that,
for all sections $ X \in \Gamma \left( A G \right) $,
\begin{equation}\label{eqn:cotangentSourceTarget}
  \bigl\langle \widetilde{ \alpha } (\mu) , X \left( \alpha (g)
    \right) \bigr\rangle  =  \bigl\langle \mu,  \overrightarrow{ X
    }(g) \bigr\rangle , \qquad
  \bigl\langle \widetilde{ \beta } (\mu) , X \left( \beta (g)
    \right) \bigr\rangle  =  \bigl\langle \mu,   \overleftarrow{ X }(g)
  \bigr\rangle .
\end{equation}
(The definition of multiplication in $ T ^\ast G $ is slightly more
intricate, and involves left and right translation by bisections;
these are discussed in \autoref{app:bisections}.)

\subsection{Discrete dynamics of Lagrangian submanifolds}
In this section, we discuss how a Lagrangian submanifold $ \Sigma
\subset \widetilde{ G } $ of a symplectic groupoid $ \widetilde{ G }
\rightrightarrows P $ gives rise to discrete dynamics.  In general,
the dynamics are only defined implicitly, as a relation, rather than
as an explicit flow map; we describe the conditions under which an
explicit dynamical flow can be given.  These dynamics can be
interpreted either as \emph{discrete Lagrangian dynamics} on $\Sigma$
or as \emph{discrete Hamiltonian dynamics} on $P$.  In the discrete
Hamiltonian case, the explicit flow is shown to be a (local) Poisson
automorphism.

\begin{definition}
  \label{def:discreteLagrangianDynamics}
  Given a symplectic groupoid $ \widetilde{ G } \rightrightarrows P $,
  let $ \Sigma \subset \widetilde{ G } $ be a Lagrangian submanifold.
  Then a sequence $ \mu _1, \ldots, \mu _n \in \widetilde{ G } $
  satisfies the \emph{discrete Lagrangian dynamics} of $\Sigma$ if $
  \mu _1 , \ldots, \mu _n \in \Sigma $ and
  \begin{equation*}
    \widetilde{ \beta } \left( \mu _k \right) = \widetilde{ \alpha }
    \left( \mu _{ k + 1 } \right) , \quad k = 1, \ldots, n - 1 ,
  \end{equation*}
  i.e., $ \mu _1, \ldots, \mu _n \in \Sigma $ forms a composable
  sequence in $ \widetilde{ G } $.
\end{definition}

The original case of discrete Lagrangian dynamics on Lie groupoids,
introduced by \citet{Weinstein1996} and elaborated further by
\citet{MaMaMa2006}, arises when $ G \rightrightarrows Q $ is a Lie
groupoid equipped with a function $ L \colon G \rightarrow \mathbb{R}
$, called the \emph{discrete Lagrangian}.  In this case, $L$ generates
a Lagrangian submanifold $ \mathrm{d} L (G) \subset T ^\ast G
\rightrightarrows A ^\ast G $ of the cotangent groupoid, and the
resulting discrete dynamics are described as follows.

\begin{theorem}
  \label{thm:unconstrained}
  Let $ G \rightrightarrows Q $ be a Lie groupoid equipped with a
  discrete Lagrangian $ L \colon G \rightarrow \mathbb{R} $. Then a
  sequence $ \mu _1 , \ldots, \mu _n \in T ^\ast G $ satisfies the
  discrete Lagrangian dynamics of $ \mathrm{d} L (G) \subset T ^\ast G
  $ if and only if
  \begin{equation*}
    \mu _k = \mathrm{d} L \left( g _k \right) \text{ for some } g _k
    \in G , \quad k = 1, \ldots, n ,
  \end{equation*}
  and the discrete Euler--Lagrange equations
  \begin{equation*}
    \overleftarrow{ X } [L] \left( g _k \right) = \overrightarrow{ X }
    [L] \left( g _{ k + 1 } \right) , \quad k = 1, \ldots, n - 1 ,
  \end{equation*}
  (cf.~\citet{MaMaMa2006}) are satisfied.
\end{theorem}

\begin{proof}
  The first condition is simply the statement that $ \mu _1 , \ldots
  ,\mu _n \in \mathrm{d} L (G) $, as required by
  \autoref{def:discreteLagrangianDynamics}. This sequence, which we
  can now express as $ \mathrm{d} L ( g _1 ) , \ldots, \mathrm{d} L (
  g _n ) $, is therefore composable in $ T ^\ast G $ if and only if
  \begin{equation*}
    ( \widetilde{ \beta } \circ \mathrm{d} L ) \left( g _k \right) =
    ( \widetilde{ \alpha } \circ \mathrm{d} L ) \left( g _{ k + 1 }
    \right) , \quad k = 1, \ldots, n - 1 .
  \end{equation*} 
  The maps $ \mathbb{F} ^- L = \widetilde{ \alpha } \circ \mathrm{d} L
  ,\ \mathbb{F} ^+ L = \widetilde{ \beta } \circ \mathrm{d} L \colon G
  \rightarrow A ^\ast G $ are called the \emph{discrete Legendre
    transforms} of $L$, and the condition above may also be written as
  $ \mathbb{F} ^+ L ( g _k ) = \mathbb{F} ^- L ( g _{ k + 1 } ) $.
  Applying the definitions of $ \widetilde{ \alpha } , \widetilde{
    \beta } $ from \eqref{eqn:cotangentSourceTarget}, this means that
  for any section $ X \in \Gamma \left( A G \right) $,
  \begin{equation*}
    \bigl\langle \mathrm{d} L \left( g _k \right) , \overleftarrow{ X } \left(
      g _k \right) \bigr\rangle = \bigl\langle \mathrm{d} L \left( g
      _{ k + 1 } \right), \overrightarrow{ X } \left( g _{ k + 1 }
    \right) \bigr\rangle , \quad k = 1, \ldots, n - 1 ,
  \end{equation*}
  i.e.,
  \begin{equation*}
    \overleftarrow{ X } [L] \left( g _k \right) = \overrightarrow{ X }
    [L] \left( g _{ k + 1 } \right) , \quad k = 1, \ldots, n - 1 ,
  \end{equation*}
  as claimed.
\end{proof}

\begin{remark}
  \citet{MaMaMa2006} showed that these discrete Euler--Lagrange
  equations are also equivalent to the sequence $ g _1 , \ldots, g _n
  \in G $ corresponding to a critical point of the action sum
  \begin{equation*}
    ( g _1 , \ldots, g _n ) \mapsto \sum _{ k = 1 } ^n L ( g _k ) ,
  \end{equation*} 
  considered over an appropriate space of admissible sequences. This
  equivalence also follows from the more general variational results
  of \autoref{sec:variational}.

  In the special case where $G$ is the pair groupoid $ Q \times Q
  \rightrightarrows Q $, this precisely recovers the discrete
  Euler--Lagrange equations,
  \begin{equation*}
    \partial _0 L \left( q _k , q _{ k + 1 } \right) + \partial _1 L
    \left( q _{ k - 1 } , q _k \right) = 0 , \qquad k = 1, \ldots, n -
    1 ,
  \end{equation*}
  as in \citet{MaWe2001}.
\end{remark}

These dynamics are implicitly defined, since they are given by a
relation on $ \widetilde{ G } $ rather than an explicitly defined map.
Restating \autoref{def:discreteLagrangianDynamics} in these terms, we
see that $ \mu _1, \ldots, \mu _n \in \widetilde{ G } $ satisfies the
discrete Lagrangian dynamics if and only if each pair of successive
elements satisfies the relation $ \left( \mu _k , \mu _{ k + 1 }
\right) \in \widetilde{ G } _2 \cap \left( \Sigma \times \Sigma
\right) $.  (In fact, such a relation may be defined by \emph{any}
subset of a groupoid, although in general, the resulting dynamics will
not preserve any notable geometric structure.)

This raises the following question: Under what conditions is this
relation, in fact, the graph of an explicit flow map $ \mu _k \mapsto
\mu _{ k + 1 } $ (at least locally), and what properties does this
flow map have?  Clearly, if the restricted source map $ \widetilde{
  \alpha } \rvert _\Sigma \colon \Sigma \rightarrow P $ is a (local)
diffeomorphism, then this flow is given by the composition $ \left(
  \widetilde{ \alpha } \rvert _\Sigma \right) ^{-1} \circ \widetilde{
  \beta } \rvert _\Sigma $.  Furthermore, if the restricted target map
$ \widetilde{ \beta } \rvert _\Sigma \colon \Sigma \rightarrow P $ is
\emph{also} a (local) diffeomorphism, then the flow is reversible, and
its (local) inverse is $ ( \widetilde{ \beta } \rvert _\Sigma ) ^{-1}
\circ \widetilde{ \alpha } \rvert _\Sigma $.  In case both the
restricted source and target maps are (local) diffeomorphisms, we say
that $\Sigma$ is a \emph{(local) Lagrangian bisection} of the
symplectic groupoid $ \widetilde{ G } $, since it is both a (local) $
\widetilde{ \alpha } $- and $ \widetilde{ \beta } $-section, as well
as a Lagrangian submanifold.  (See \autoref{app:bisections} for more
on bisections of Lie groupoids.)

In the case where $ \Sigma = \mathrm{d}L (G) $ is generated by the
discrete Lagrangian $L$, the restricted source and target maps
correspond precisely to the discrete Legendre transforms $ \mathbb{F}
^\pm L $, and the local bisection condition corresponds to regularity
of the discrete Lagrangian. (This is discussed further, and
generalized to constrained mechanics, in \autoref{sec:constraints}.)
Therefore, $ \widetilde{ \alpha } \rvert _\Sigma $ and $ \widetilde{
  \beta } \rvert _\Sigma $ can be seen as \emph{generalized discrete
  Legendre transforms}, and the question of whether they are local
diffeomorphisms generalizes the question of discrete regularity.

We now show that, in fact, if \emph{either} of the maps $ \widetilde{
  \alpha } \rvert _\Sigma $ or $ \widetilde{ \beta } \rvert _\Sigma $
is a local diffeomorphism, then both are.  That is, if the Lagrangian
submanifold $ \Sigma \subset \widetilde{ G } $ is either a local $
\widetilde{ \alpha } $- or $ \widetilde{ \beta } $-section, then it is
both, i.e., $\Sigma$ is a local Lagrangian bisection.  (Note that this
result depends crucially upon the fact that $ \widetilde{ G } $ is a
\emph{symplectic} groupoid and that $ \Sigma \subset \widetilde{ G } $
is Lagrangian; it is not true for arbitrary submanifolds of Lie
groupoids.)  We begin by proving a lemma on complementary subspaces in
a symplectic vector space; in the main theorem, this lemma is then
applied to the $ \widetilde{ \alpha } $- and $ \widetilde{ \beta }
$-vertical tangent subspaces, exploiting the fact that they are
symplectic orthogonal complements.

We note that this is a significant generalization of previous results
on the regularity of discrete Lagrangian dynamics.
\citet{Weinstein1996} first raised the question of how regularity
results for the pair groupoid $ Q \times Q $ might be generalized to
arbitrary Lie groupoids $ G \rightrightarrows Q $, and this question
was answered by \citet[Theorem 4.13]{MaMaMa2006}.  Here,
we extend this answer from the Lagrangian submanifold $ \mathrm{d} L
(G) \subset T ^\ast G $ to arbitrary Lagrangian submanifolds of
symplectic groupoids.

\begin{lemma}
  \label{lem:symplecticComplements}
  Let $ \left( Z , \omega \right) $ be a $ 2{d} $-dimensional
  symplectic vector space with subspaces $ V, W \subset Z $, and
  denote their symplectic orthogonal complements by $ V ^\omega , W
  ^\omega \subset Z $, respectively.  If $ \dim V + \dim W = 2{d} $,
  then $ \dim \left( V \cap W \right) = \dim \left( V ^\omega \cap W
    ^\omega \right) $.
\end{lemma}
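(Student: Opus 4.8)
The plan is to reduce everything to a linear-algebra identity relating the dimension of an intersection to the dimension of a sum, using the standard facts that $\dim V^\omega = 2d - \dim V$ and $(V \cap W)^\omega = V^\omega + W^\omega$ for subspaces of a symplectic (or indeed any nondegenerate-bilinear-form) vector space. First I would record these two facts: the orthogonal-complement operation reverses inclusions and is involutive ($ (V^\omega)^\omega = V $), so it converts intersections into sums and vice versa, giving $(V \cap W)^\omega = V^\omega + W^\omega$. Taking dimensions, $\dim(V \cap W) = 2d - \dim(V^\omega + W^\omega)$.

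Next I would apply the Grassmann dimension formula twice. On one hand, $\dim(V^\omega + W^\omega) = \dim V^\omega + \dim W^\omega - \dim(V^\omega \cap W^\omega) = (2d - \dim V) + (2d - \dim W) - \dim(V^\omega \cap W^\omega)$. Substituting the hypothesis $\dim V + \dim W = 2d$, this becomes $\dim(V^\omega + W^\omega) = 2d - \dim(V^\omega \cap W^\omega)$. Combining with the previous display yields $\dim(V \cap W) = 2d - \bigl(2d - \dim(V^\omega \cap W^\omega)\bigr) = \dim(V^\omega \cap W^\omega)$, which is exactly the claim.

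I do not anticipate a serious obstacle here: the only thing that needs care is justifying $(V \cap W)^\omega = V^\omega + W^\omega$, which follows from nondegeneracy of $\omega$ (so that $V \mapsto V^\omega$ is an inclusion-reversing involution on the lattice of subspaces, hence an anti-isomorphism of lattices sending $\cap$ to $+$); alternatively one can verify $\supseteq$ directly and $\subseteq$ by a dimension count, or simply cite that the annihilator in the dual satisfies $(V \cap W)^\circ = V^\circ + W^\circ$ and transport along the isomorphism $Z \cong Z^\ast$ induced by $\omega$. Note that the hypothesis $\dim V + \dim W = 2d$ is used exactly once, to cancel the $2d$ terms; without it one only gets the general identity $\dim(V \cap W) - \dim(V^\omega \cap W^\omega) = \dim V + \dim W - 2d$, of which the lemma is the balanced case.
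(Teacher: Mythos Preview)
Your proof is correct and is essentially the same as the paper's: both use the standard identity that the symplectic-orthogonal operation swaps sums and intersections, together with the Grassmann dimension formula and the hypothesis $\dim V + \dim W = 2d$. The only cosmetic difference is that the paper starts from $(V+W)^\omega = V^\omega \cap W^\omega$ and computes $\dim(V^\omega \cap W^\omega)$ directly, whereas you start from the dual identity $(V\cap W)^\omega = V^\omega + W^\omega$; the arithmetic is identical.
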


\begin{proof}
  Observe that $ \left( V + W \right) ^\omega = V ^\omega \cap W
  ^\omega $.  Since the subspaces $ V + W $ and $ \left( V + W \right)
  ^\omega $ are symplectic orthogonal complements, the sum of their
  dimensions is $ 2 {d} $.  Therefore,
  \begin{align*}
    \dim \left( V ^\omega \cap W ^\omega \right) &= 2{d} - \dim \left(
      V + W \right) \\
    &= 2{d} - \dim V - \dim W + \dim \left( V \cap W \right) \\
    &= \dim \left( V \cap W \right),
  \end{align*}
  which completes the proof.
\end{proof}

\begin{theorem}
  \label{thm:lagrangianBisection}
  Let $ \widetilde{ G } \rightrightarrows P $ be a $ 2{d}$-dimensional
  symplectic groupoid, and suppose $ \Sigma \subset \widetilde{ G } $
  is a Lagrangian submanifold.  Then the restricted source map $
  \widetilde{ \alpha } \rvert _\Sigma \colon \Sigma \rightarrow P $ is
  a local diffeomorphism if and only if the restricted target map $
  \widetilde{ \beta } \rvert _\Sigma \colon \Sigma \rightarrow P $ is.
\end{theorem}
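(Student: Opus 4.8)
The plan is to reduce the statement to a pointwise linear-algebra fact about the tangent spaces along $\Sigma$ and then invoke \autoref{lem:symplecticComplements}. First I would settle the dimension bookkeeping. Since the identity section $ \widetilde{ \epsilon } \colon P \rightarrow \widetilde{ G } $ is a Lagrangian immersion, $ \dim P = d $, and hence the Lagrangian submanifold $ \Sigma \subset \widetilde{ G } $ also has $ \dim \Sigma = d $. Because $ \widetilde{ \alpha } $ and $ \widetilde{ \beta } $ are submersions onto $P$, the vertical subspaces $ \ker T _\mu \widetilde{ \alpha } $ and $ \ker T _\mu \widetilde{ \beta } $ both have dimension $d$ at every $ \mu \in \widetilde{ G } $. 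Consequently, $ \widetilde{ \alpha } \rvert _\Sigma $ is a local diffeomorphism if and only if, for every $ \mu \in \Sigma $, the differential $ T _\mu ( \widetilde{ \alpha } \rvert _\Sigma ) \colon T _\mu \Sigma \rightarrow T _{ \widetilde{ \alpha } (\mu) } P $ is an isomorphism; and since its domain and codomain both have dimension $d$, this holds precisely when $ T _\mu ( \widetilde{ \alpha } \rvert _\Sigma ) $ is injective, i.e., when $ T _\mu \Sigma \cap \ker T _\mu \widetilde{ \alpha } = \{ 0 \} $. The identical reasoning shows that $ \widetilde{ \beta } \rvert _\Sigma $ is a local diffeomorphism if and only if $ T _\mu \Sigma \cap \ker T _\mu \widetilde{ \beta } = \{ 0 \} $ for every $ \mu \in \Sigma $.

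It therefore suffices to prove, for each fixed $ \mu \in \Sigma $, that the two intersections $ T _\mu \Sigma \cap \ker T _\mu \widetilde{ \alpha } $ and $ T _\mu \Sigma \cap \ker T _\mu \widetilde{ \beta } $ have the same dimension. I would apply \autoref{lem:symplecticComplements} to the symplectic vector space $ Z = \bigl( T _\mu \widetilde{ G } , \widetilde{ \omega } _\mu \bigr) $ with $ V = T _\mu \Sigma $ and $ W = \ker T _\mu \widetilde{ \alpha } $. The hypothesis $ \dim V + \dim W = d + d = 2 d $ is exactly the dimension count just established, so the lemma gives $ \dim ( V \cap W ) = \dim \bigl( V ^{ \widetilde{ \omega } } \cap W ^{ \widetilde{ \omega } } \bigr) $. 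Now $ V = T _\mu \Sigma $ is Lagrangian, so $ V ^{ \widetilde{ \omega } } = V $, while the defining property of a symplectic groupoid recalled above gives $ W ^{ \widetilde{ \omega } } = ( \ker T _\mu \widetilde{ \alpha } ) ^{ \widetilde{ \omega } } = \ker T _\mu \widetilde{ \beta } $. Hence $ \dim \bigl( T _\mu \Sigma \cap \ker T _\mu \widetilde{ \alpha } \bigr) = \dim \bigl( T _\mu \Sigma \cap \ker T _\mu \widetilde{ \beta } \bigr) $; in particular one intersection is trivial if and only if the other is, and combining this with the reduction of the previous paragraph yields the claimed equivalence.

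I do not expect a genuinely hard step once \autoref{lem:symplecticComplements} is in hand; the points that need care are the dimension bookkeeping (which relies on the identity section being Lagrangian, so that $ \dim P = d $, and on $ \Sigma $ being Lagrangian, so that $ \dim \Sigma = d $) and the two structural identifications $ V ^{ \widetilde{ \omega } } = V $ and $ ( \ker T _\mu \widetilde{ \alpha } ) ^{ \widetilde{ \omega } } = \ker T _\mu \widetilde{ \beta } $. The latter is precisely where the \emph{symplectic}-groupoid hypothesis and the Lagrangian condition on $\Sigma$ genuinely enter, and it explains why the conclusion can fail for arbitrary submanifolds of arbitrary Lie groupoids.
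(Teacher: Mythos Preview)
Your proposal is correct and follows essentially the same approach as the paper: reduce to a pointwise statement, apply \autoref{lem:symplecticComplements} to $T_\mu\Sigma$ and $\ker T_\mu\widetilde{\alpha}$, and use that $T_\mu\Sigma$ is its own symplectic orthogonal while $(\ker T_\mu\widetilde{\alpha})^{\widetilde{\omega}}=\ker T_\mu\widetilde{\beta}$. The only cosmetic difference is that the paper labels the subspaces in the opposite order ($V=\ker T_\mu\widetilde{\alpha}$, $W=T_\mu\Sigma$) and deduces $\dim P=d$ from the equality of the fiber dimensions rather than from the identity section being Lagrangian.
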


\begin{proof}
  For any $ \mu \in \Sigma $, let $ V = \ker T _\mu \widetilde{ \alpha
  } $ (i.e., the tangent space to the source fiber at $\mu$) and $ W =
  T _\mu \Sigma $.  The source and target fibers are symplectic
  orthogonal complements, so $ V ^{\widetilde{\omega}} = \ker T _\mu
  \widetilde{ \beta } $, and because the fibers have equal dimension,
  it follows that $ \dim V = \dim V ^{\widetilde{\omega}} = {d} $.
  Next, because $ \Sigma $ is a Lagrangian submanifold, this implies
  that $ W ^{\widetilde{\omega}} = W $, so $ \dim W = {d} $.
  Therefore, $ \dim V + \dim W = 2{d} $, so these subspaces of $ T
  _\mu \widetilde{ G } $ satisfy the conditions of
  \autoref{lem:symplecticComplements}.

  Finally, $ \widetilde{ \alpha } \rvert _\Sigma $ is a local
  diffeomorphism at $\mu$ if and only if the kernel
  \begin{equation*}
    \ker T _\mu \left( \widetilde{ \alpha } \rvert _\Sigma \right) =
    \ker T _\mu \widetilde{ \alpha } \cap T _\mu \Sigma = V \cap W 
  \end{equation*}
  is trivial.  Likewise, $ \widetilde{ \beta } \rvert _\Sigma $ is a
  local diffeomorphism at $\mu$ if and only if the kernel
  \begin{equation*}
    \ker T _\mu ( \widetilde{ \beta } \rvert _\Sigma )= \ker T _\mu
    \widetilde{ \beta } \cap T _\mu \Sigma = V ^{\widetilde{\omega}}
    \cap W ^{\widetilde{\omega}} 
  \end{equation*}
  is trivial.  But \autoref{lem:symplecticComplements} implies that
  these kernels have equal dimension, so one is trivial if and only if
  the other is.
\end{proof}

\begin{remark}
  Although we are only concerned with the conditions for $ \widetilde{
    \alpha } \rvert _\Sigma $ and $ \widetilde{ \beta } \rvert _\Sigma
  $ to be local diffeomorphisms, the proof of
  \autoref{thm:lagrangianBisection} also applies to any other property
  that can be described in terms of the dimension of the kernels of
  their tangent maps.  (For example: $ \widetilde{ \alpha } \rvert
  _\Sigma $ has constant rank if and only if $ \widetilde{ \beta }
  \rvert _\Sigma $ does.)
\end{remark}

Using \autoref{thm:lagrangianBisection}, we deduce that if $
\widetilde{ \alpha } \rvert _\Sigma $ is a local diffeomorphism, then
so is $ \widetilde{ \beta } \rvert _\Sigma $, and hence the discrete
Lagrangian flow map $ \left( \widetilde{ \alpha } \rvert _\Sigma
\right) ^{-1} \circ \widetilde{ \beta } \rvert _\Sigma $ is a local
automorphism on $\Sigma$.  Reversing the order of composition, it also
follows that the discrete Hamiltonian flow map $ \widetilde{ \beta }
\rvert _\Sigma \circ \left( \widetilde{ \alpha } \rvert _\Sigma
\right) ^{-1} $ is a local automorphism on $P$---and moreover, it is a
local Poisson automorphism.  To see this, consider the Poisson map $ (
\widetilde{ \alpha } , \widetilde{ \beta } ) \colon \widetilde{ G }
\rightarrow P ^{-} \times P $, $ \mu \mapsto (\widetilde{ \alpha }
(\mu) , \widetilde{ \beta } (\mu) ) $, and observe that the image of
$\Sigma$ is precisely the graph of $ \widetilde{ \beta } \rvert
_\Sigma \circ \left( \widetilde{ \alpha } \rvert _\Sigma \right) ^{-1}
$ in $ P ^{-} \times P $.  However, since $ \Sigma $ is Lagrangian,
its image under the Poisson map $ ( \widetilde{ \alpha }, \widetilde{
  \beta } ) $ is coisotropic; thus, it follows from a result of
\citet{Weinstein1988} that $ \widetilde{ \beta } \rvert _\Sigma \circ
\left( \widetilde{ \alpha } \rvert _\Sigma \right) ^{-1} $ is a
(local) Poisson automorphism on $P$, since its graph is coisotropic in
$ P ^{-} \times P $.  This argument is essentially due to
\citet{Ge1990}, who further showed that this map from (local)
Lagrangian bisections of a symplectic groupoid to (local) Poisson
automorphisms on the base is a group homomorphism.  (The group
structure that allows for composition of bisections is discussed in
\autoref{app:bisections}.)

\section{Discrete constrained Lagrangian mechanics}
\label{sec:constraints}

\subsection{Generating Lagrangian submanifolds of $T^*G$}
In this section, we will be concerned with generating a Lagrangian
submanifold $ \Sigma _L \subset T ^\ast G $ of the cotangent groupoid,
associated to a function $L$ called the \emph{discrete Lagrangian}.
This is a particular example of the formalism introduced in the
previous section.  As shown in \autoref{thm:unconstrained}, one way to
do this is by defining a discrete Lagrangian $ L \colon G \rightarrow
\mathbb{R} $ and taking $ \Sigma _L = \mathrm{d} L (G) $; this is the
case considered in previous work on discrete Lagrangian mechanics,
including \citet{Weinstein1996} and \citet{MaMaMa2006}.

Setting aside the groupoid structure for the moment, this approach
exploits the fact that for any manifold $M$ and function $ L \colon M
\rightarrow \mathbb{R} $, the submanifold $ \mathrm{d} L (M) \subset T
^\ast M $ is Lagrangian.  However, there is a more general
construction due to \citet{SnTu1972} (see also \citet{Tulczyjew1977}),
which we will use to generalize the earlier approach to discrete
mechanics.

\begin{theorem}[\citet{SnTu1972}]
  \label{thm:tulczyjew}
  Let $M$ be a smooth manifold, $ N \subset M $ a submanifold, and $ L
  \colon N \rightarrow \mathbb{R} $.  Then
  \begin{multline*}
    \Sigma _L = \{ p \in T ^\ast M \;\vert\; \pi _M (p)
      \in N \text{ and } \left\langle p, v \right\rangle =
      \left\langle \mathrm{d} L , v \right\rangle \\
      \text{for all } v \in T N \subset T M \text{ such that } \tau _M
      (v) = \pi _M (p) \} 
  \end{multline*}
  is a Lagrangian submanifold of $ T ^\ast M $.
\end{theorem}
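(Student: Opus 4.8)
The plan is to check the three defining properties of a Lagrangian submanifold in turn: that $\Sigma_L$ is an embedded submanifold of $T^\ast M$, that $\dim \Sigma_L = \tfrac{1}{2}\dim T^\ast M$, and that the canonical symplectic form pulls back to zero on $\Sigma_L$. Throughout I write $m = \dim M$, $n = \dim N$, let $\iota \colon N \hookrightarrow M$ be the inclusion, and read $\mathrm{d}L$ as the differential of $L$ viewed as a function on $N$, i.e. a section of $T^\ast N$; note that the pairing $\langle \mathrm{d}L, v\rangle$ in the statement only ever involves $v \in T_q N$, so no extension of $L$ off $N$ is needed and there is no ambiguity.

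First I would identify $\Sigma_L$ as the preimage of a submanifold under a submersion. Observe that $\Sigma_L$ is contained in $\pi_M^{-1}(N)$, which is an embedded submanifold of $T^\ast M$ of dimension $m+n$ because $\pi_M$ is a submersion. Fibrewise restriction of covectors to $TN$ defines a vector bundle epimorphism $r \colon \pi_M^{-1}(N) \to T^\ast N$ over $N$ (with $r$ over $q$ equal to the transpose of $T_q\iota$), and this $r$ is a surjective submersion. By construction $\Sigma_L = r^{-1}\bigl(\mathrm{d}L(N)\bigr)$, the preimage of the image of the section $\mathrm{d}L \colon N \to T^\ast N$. Since $\mathrm{d}L(N)$ is an embedded submanifold of $T^\ast N$ of codimension $n$, its preimage $\Sigma_L$ is an embedded submanifold of $\pi_M^{-1}(N)$, hence of $T^\ast M$, of codimension $n$, so $\dim \Sigma_L = (m+n) - n = m = \tfrac{1}{2}\dim T^\ast M$. (Concretely, in adapted coordinates $(x^i, y^a)$ with $N = \{y = 0\}$ and canonical cotangent coordinates $(x^i, y^a, p_i, q_a)$, one has $\Sigma_L = \{(x, 0, \partial L/\partial x, q)\}$, visibly a submanifold parametrised by $(x^i, q_a)$, an affine bundle over $N$ modelled on the conormal bundle $\nu^\ast N$.)

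Next I would show $\Sigma_L$ is isotropic, and this is the one place where the precise form of the defining condition does the work. Let $j \colon \Sigma_L \hookrightarrow T^\ast M$ be the inclusion and set $\pi := \pi_M \circ j \colon \Sigma_L \to N$; let $\theta$ denote the tautological (Liouville) $1$-form on $T^\ast M$, so that the canonical symplectic form is $\omega = -\mathrm{d}\theta$ (the sign is immaterial). For $p \in \Sigma_L$, $q = \pi(p) \in N$, and $X \in T_p\Sigma_L$, one has $T_p\pi_M(X) = T_p\pi(X) \in T_q N$, so the defining condition of $\Sigma_L$ gives
\[
  (j^\ast\theta)_p(X) \;=\; \bigl\langle p, T_p\pi_M(X)\bigr\rangle \;=\; \bigl\langle \mathrm{d}L_q, T_p\pi(X)\bigr\rangle \;=\; (\pi^\ast \mathrm{d}L)_p(X).
\]
Hence $j^\ast\theta = \pi^\ast\mathrm{d}L = \mathrm{d}(L\circ\pi)$ is exact, and therefore $j^\ast\omega = -\mathrm{d}(j^\ast\theta) = 0$. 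Combined with the dimension count from the previous step, $\Sigma_L$ is Lagrangian. (In the adapted coordinates above this is just $j^\ast\omega = \mathrm{d}(\partial L/\partial x^i)\wedge \mathrm{d}x^i = \tfrac{\partial^2 L}{\partial x^i \partial x^j}\,\mathrm{d}x^j\wedge\mathrm{d}x^i = 0$ by symmetry of the Hessian.)

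The only genuinely fiddly point is the first step — verifying that $\Sigma_L$ really is an embedded submanifold of the correct dimension, since its description superimposes a constraint on the base point ($\pi_M(p)\in N$) with an affine constraint on the fibre (each fibre over $q\in N$ being forced to equal $\mathrm{d}L_q + \nu_q^\ast N$). The submersion-preimage argument handles this cleanly and uniformly; once it is in place, the isotropy is essentially immediate from the tautological-form identity above, and the result follows.
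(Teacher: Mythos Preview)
Your proof is correct and complete. However, note that the paper does not actually prove this theorem: it is stated with attribution to Tulczyjew and a citation to \cite{Tulczyjew1977}, and is used as a black box thereafter. There is therefore no proof in the paper to compare against. Your argument---identifying $\Sigma_L$ as the preimage of the section $\mathrm{d}L(N)\subset T^\ast N$ under the fibrewise restriction submersion $r\colon \pi_M^{-1}(N)\to T^\ast N$ to establish the submanifold and dimension properties, then showing isotropy via $j^\ast\theta = \mathrm{d}(L\circ\pi)$---is the standard clean approach and would serve perfectly well as a self-contained proof were one to be included.
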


(Here, $ \pi _M \colon T ^\ast M \rightarrow M $ and $ \tau _M \colon
T M \rightarrow M $ denote the cotangent and tangent bundle
projections, respectively.)  In the special case $ N = M $, this gives
the familiar Lagrangian submanifold $ \Sigma _L = \mathrm{d} L (M)
\subset T ^\ast M $.

Turning back to the groupoid formulation, this allows one to define a
discrete Lagrangian on some constraint submanifold $ N \subset G $,
rather than necessarily on all of $G$.  This realization motivates the
following definition.

\begin{definition}
  A \emph{discrete constrained Lagrangian system} consists of a triple
  $ \left( G, N , L \right) $, where $ G \rightrightarrows Q $ is a
  Lie groupoid, $ N \subset G $ is a submanifold, and $ L \colon N
  \rightarrow \mathbb{R} $ is a function called the discrete
  Lagrangian.
\end{definition}

It follows immediately from \autoref{thm:tulczyjew} that a discrete
constrained Lagrangian system generates a Lagrangian submanifold $
\Sigma _L \subset T ^\ast G $ of the cotangent groupoid $ T ^\ast G
\rightrightarrows A ^\ast G $.  The relationship among these spaces is
shown in the following diagram:
\begin{equation*}
  \xymatrix{ & \Sigma _L \mathrel{\ar@{^(->}[r]} \ar[d]& T ^\ast G
    \ar@<.5ex>[r] ^{ \widetilde{ \alpha } }  \ar@<-.5ex>[r]_{
      \widetilde{ \beta } }  \ar[d]& A ^\ast G \ar[d]\\ \mathbb{R}  &
    \ar[l]_L    N  \mathrel{\ar@{^(->}[r]}& G \ar@<.5ex>[r] ^\alpha
    \ar@<-.5ex>[r] _\beta  & Q  } 
\end{equation*}

\begin{definition}
  Let $ \left( G, N, L \right) $ be a discrete constrained Lagrangian
  system.  Then define the \emph{discrete Legendre transformations} $
  \mathbb{F} ^\pm L \colon \Sigma _L \rightarrow A ^\ast G $ to be the
  restricted source and target maps,
  \begin{equation*}
    \mathbb{F}  ^- L = \widetilde{ \alpha } \rvert _{ \Sigma _L },
    \qquad \mathbb{F}  ^+ L = \widetilde{ \beta } \rvert _{ \Sigma _L
    } .
  \end{equation*}
\end{definition}

\begin{corollary}
  The discrete Legendre transformation $ \mathbb{F} ^- L $ is a local
  diffeomorphism if and only if $ \mathbb{F} ^+ L $ is.
\end{corollary}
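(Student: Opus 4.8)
The plan is to reduce the Corollary directly to \autoref{thm:lagrangianBisection}. By \autoref{thm:tulczyjew}, the discrete constrained Lagrangian system $(G,N,L)$ produces a genuine Lagrangian submanifold $\Sigma_L \subset T^\ast G$, and the cotangent groupoid $T^\ast G \rightrightarrows A^\ast G$ is a symplectic groupoid. By the \emph{definition} of the discrete Legendre transformations, $\mathbb{F}^- L = \widetilde{\alpha}\rvert_{\Sigma_L}$ and $\mathbb{F}^+ L = \widetilde{\beta}\rvert_{\Sigma_L}$ are literally the restricted source and target maps appearing in the theorem. So the statement is an immediate specialization: apply \autoref{thm:lagrangianBisection} with $\widetilde{G} = T^\ast G$, $P = A^\ast G$, and $\Sigma = \Sigma_L$.

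The only thing to check before invoking the theorem is that the hypotheses are actually met. First, that $T^\ast G$ is a symplectic groupoid over $A^\ast G$ — this was recalled in the preceding subsection (with source and target given by \eqref{eqn:cotangentSourceTarget}), so it may be cited. Second, that $\Sigma_L$ is a Lagrangian submanifold of $T^\ast G$ — this is exactly the conclusion of \autoref{thm:tulczyjew} applied to $M = G$, $N \subset G$, $L \colon N \to \mathbb{R}$, and was already noted in the text immediately after the statement of that theorem. With both hypotheses in hand, \autoref{thm:lagrangianBisection} says precisely that $\widetilde{\alpha}\rvert_{\Sigma_L}$ is a local diffeomorphism if and only if $\widetilde{\beta}\rvert_{\Sigma_L}$ is, which by definition is the assertion that $\mathbb{F}^- L$ is a local diffeomorphism if and only if $\mathbb{F}^+ L$ is.

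There is no real obstacle here: the corollary is a formal consequence of the general theorem together with the identifications already established. If anything, the only subtlety worth a remark is that $\Sigma_L$ need not be a graph over either $N$ or $A^\ast G$ in general — it is merely Lagrangian — but this is exactly the generality that \autoref{thm:lagrangianBisection} was designed to handle, so no extra hypothesis on $N$ or $L$ (such as regularity) is needed for the biconditional itself. One could optionally close by observing that, when either (hence both) of $\mathbb{F}^\pm L$ is a local diffeomorphism, $\Sigma_L$ is a local Lagrangian bisection of $T^\ast G$ and therefore induces, via the argument following \autoref{thm:lagrangianBisection}, a local Poisson automorphism of $A^\ast G$; but this is not required to prove the corollary as stated.
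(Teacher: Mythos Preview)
Your proposal is correct and follows exactly the paper's own approach: the paper's proof is simply ``Direct application of \autoref{thm:lagrangianBisection},'' and you have spelled out that this application is licensed because $T^\ast G \rightrightarrows A^\ast G$ is a symplectic groupoid, $\Sigma_L$ is Lagrangian by \autoref{thm:tulczyjew}, and $\mathbb{F}^\pm L$ are by definition the restricted source and target maps. The additional remarks you make are accurate but superfluous for the corollary as stated.
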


\begin{proof}
  Direct application of \autoref{thm:lagrangianBisection}.
\end{proof}

\begin{definition}
  A discrete constrained Lagrangian system $ \left( G, N, L \right) $
  is said to be \emph{regular} if $ \mathbb{F} ^\pm L $ are local
  diffeomorphisms, and \emph{hyperregular} if they are global
  diffeomorphisms.
\end{definition}

Now, given a trajectory $ \mu _1, \ldots, \mu _n \in T ^\ast G $, it
follows from \autoref{def:discreteLagrangianDynamics} that this
satisfies the discrete Lagrangian dynamics of $ \Sigma _L $ when $ \mu
_1 , \ldots, \mu _n \in \Sigma _L $ and
\begin{equation*}
  \mathbb{F}  ^+ L \left( \mu _k \right) = \mathbb{F}  ^- L \left( \mu
    _{ k + 1 } \right) , \quad k = 1, \ldots, n - 1 .
\end{equation*}
Hence, if $ \left( G, N , L \right) $ is regular, then the discrete
Lagrangian flow map is given by the local automorphism $ \left(
  \mathbb{F} ^- L \right) ^{-1} \circ \mathbb{F} ^+ L $ on $ \Sigma _L
$, while the discrete Hamiltonian flow map is given by the local
Poisson automorphism $ \mathbb{F} ^+ L \circ \left( \mathbb{F} ^- L
\right) ^{-1} $ on $ A ^\ast G $.  In the special case $ N = G $, $
\Sigma _L = \mathrm{d} L (G) $, this is in agreement with the
formulation of \citet{Weinstein1996} and \citet{MaMaMa2006}, as
discussed in \autoref{thm:unconstrained}.

\begin{remark}
  \label{rmk:subgroupoid}
  It should be emphasized that this formalism does not require $N$ to
  be a \emph{subgroupoid} of $G$, but only a \emph{submanifold}.  If $
  N \subset G $ is indeed a subgroupoid, then one can simply reduce to
  the unconstrained dynamics on $ N \rightrightarrows Q $, which are
  given by the Lagrangian submanifold $ \mathrm{d} L (N) \subset T
  ^\ast N \rightrightarrows A ^\ast N $.  Therefore, in the
  subgroupoid case, $N$ can be thought of as specifying \emph{discrete
    integrable constraints} on $G$.  By contrast, when $N$ is not a
  subgroupoid, it can be thought of as specifying \emph{discrete
    non-integrable constraints} on $G$.

  This is consistent with the definition of integrable and
  non-integrable constraints for continuous Lagrangian systems.  For
  example, a constraint distribution $ \Delta \subset T Q $ is
  integrable precisely when $\Delta$ is closed under the Jacobi--Lie
  bracket on $ T Q $, i.e., when $ \Delta $ is a \emph{subalgebroid}
  of the tangent Lie algebroid $ T Q $.  Just as continuous integrable
  constraints correspond to Lie subalgebroids, it is natural to think
  of discrete integrable constraints as corresponding to Lie
  subgroupoids.

  Finally, we note that this approach to \emph{non-integrable}
  constrained mechanics should not be confused with
  \emph{nonholonomic} mechanics. Indeed, in nonholonomic mechanics,
  one would generally not have a symplectic or Poisson flow, and thus
  the associated graph would not be a Lagrangian submanifold. The
  approach presented here is more closely related to the so-called
  \emph{vakonomic} approach to non-integrable constraints, which is
  commonly applied to problems in optimal control, for example.
\end{remark}

\subsection{Affine bundle structure of \texorpdfstring{$ \Sigma _L
    $}{\textbackslash Sigma\_L}} For any discrete constrained
Lagrangian system $ \left( G, N , L \right) $, the Lagrangian
submanifold $ \Sigma _L \subset T ^\ast G $ is also a bundle over $N$.
More precisely, taking the projection to be the restriction of the
cotangent bundle projection $ \pi _G \rvert _{ \Sigma _L } \colon
\Sigma _L \rightarrow N $, we obtain an affine bundle whose associated
vector bundle is $ \nu ^\ast N $, the conormal bundle of $N$ in $G$.
To see this, note that for any $ \mu \in \Sigma _L $ and $ \Lambda \in
\nu ^\ast N $ at the same basepoint $ \pi _G (\mu) = \pi _G (\Lambda)
= g $, we have
\begin{equation*}
  \left\langle \mu + \Lambda , v \right\rangle = \left\langle \mu , v
  \right\rangle = \left\langle \mathrm{d} L (g), v \right\rangle 
\end{equation*}
for all $ v \in T _g N $, and thus $ \mu + \Lambda \in \Sigma _L $.
(For more details, see \citet{LiMa1987} and references therein.)

Therefore, $ \Sigma _L $ is isomorphic to $ \nu ^\ast N $, but
generally not in any canonical way.  To choose a particular
isomorphism, if one so desires, it suffices to specify a distinguished
section $ \sigma \colon N \rightarrow \Sigma _L $, and then
\begin{equation*}
  \Sigma _L = \left\{ \sigma (g) + \Lambda \;\middle\vert\; \Lambda
    \in \nu ^\ast N,\ g = \pi _G (\Lambda) \right\} \cong \nu ^\ast N .
\end{equation*}
In particular, suppose that $ \widehat{L} $ is an extension of $L$ to
a neighborhood of $N$ in $G$, so that $ L = \widehat{L} \rvert _N $.
Then this defines the distinguished section $ \sigma = \mathrm{d}
\widehat{L} \rvert _N $, and hence
\begin{equation*}
  \Sigma _L = \bigl\{ \mathrm{d} \widehat{L} (g) + \Lambda
    \;\bigm\vert\; \Lambda \in \nu ^\ast N,\ g = \pi _G (\Lambda)
  \bigr\} .
\end{equation*}
Applying the definitions of $ \widetilde{ \alpha } $ and $ \widetilde{
  \beta } $, it follows that the discrete Legendre transformations can
be written as
\begin{equation*}
  \bigl\langle \mathbb{F}  ^- L \bigl( \mathrm{d} \widehat{L} (g) +
  \Lambda \bigr) , X \left( \alpha (g) \right) \bigr\rangle =
  \bigl\langle  \mathrm{d} \widehat{L} (g) + \Lambda ,
  \overrightarrow{ X } (g) \bigr\rangle = \overrightarrow{ X }
  [\widehat{L}] (g) + \bigl\langle \Lambda , \overrightarrow{ X } (g)
  \bigr\rangle ,
\end{equation*}
and similarly,
\begin{equation*}
  \bigl\langle \mathbb{F}  ^+ L \bigl( \mathrm{d} \widehat{L} (g) +
  \Lambda \bigr) , X \left( \beta (g) \right) \bigr\rangle =
  \overleftarrow{ X } [\widehat{L}] (g) + \bigl\langle \Lambda,
  \overleftarrow{ X } (g) \bigr\rangle ,  
\end{equation*}
for all sections $ X \in \Gamma \left( A G \right) $.

Now, given a sequence $ \mu _1, \ldots, \mu _n \in T ^\ast G $, let $
g _k = \pi _G \left( \mu _k \right) $ and take $ \Lambda _k = \mu _k -
\mathrm{d} \widehat{L} \left( g _k \right) $ for $ k = 1, \ldots, n $.
Therefore, this is a solution of the discrete Lagrangian dynamics when
$ \Lambda _1, \ldots, \Lambda _n \in \nu ^\ast N $ and
\begin{equation}
\label{eqn:delLambda}
  \overleftarrow{ X } [\widehat{L}] \left( g _k \right)  +
  \bigl\langle \Lambda _k , \overleftarrow{ X } \left( g _k \right)
  \bigr\rangle  =   \overrightarrow{ X } [\widehat{L}] \left( g _{ k +
      1 } \right)  + \bigl\langle \Lambda _{ k + 1 } ,
  \overrightarrow{  X } \left( g _{ k + 1 } \right) \bigr\rangle ,
\end{equation}
for all sections $ X \in \Gamma \left( A G \right) $ and $ k = 1,
\ldots, n - 1 $.  In the special case $ N = G $, observe that $ \nu
^\ast N $ is simply the zero section of $ T ^\ast G $, which is
isomorphic to $G$ itself; hence, the dynamics reduce to the
unconstrained discrete Euler--Lagrange equations of
\autoref{thm:unconstrained}.

\subsection{Lagrange multipliers}
\label{sec:lagrangeMultipliers}
Suppose that the constraint submanifold $ N \subset G $ is defined by
\begin{equation*}
  N = \left\{ g \in G \;\middle\vert\; \phi ^a (g) = 0 ,\ a \in A
  \right\} ,
\end{equation*}
where $ \left\{ \phi ^a \right\} _{ a \in A } $ is a family of real
functions defined in a neighborhood of $N$ and $A$ is an index set.
It follows that $ \left\{ \mathrm{d} \phi ^a \rvert _N \right\} _{ a
  \in A } $ is a basis of sections of the conormal bundle $ \nu ^\ast
N $.  Hence, a section $ \Lambda $ of the conormal bundle can be
written
\begin{equation*}
  \Lambda = \lambda _a \,\mathrm{d} \phi ^a \rvert _N ,
\end{equation*}
where the real functions $ \lambda _a $ on $N$ are called
\emph{Lagrange multipliers}.  (Here, we have used the Einstein
summation notation to indicate that we are summing over $ a \in A $.)
In fact, since $ \phi ^a \rvert _N = 0 $, we can deduce that
\begin{equation*}
  \Lambda = \mathrm{d} \left( \lambda _a \phi ^a \rvert _N \right) .
\end{equation*}
Now, as before, suppose that the discrete Lagrangian $L \colon N
\rightarrow \mathbb{R} $ is the restriction to $N$ of a real function
$ \widehat{L} $ on $G$.  Therefore, an element $ \mu \in \Sigma _L $,
with $ g = \pi _G (\mu) $, can be written as
\begin{equation*}
  \mu = \mathrm{d} \widehat{L} (g) + \mathrm{d} \left( \lambda _a \phi
    ^a \right) (g) = \mathrm{d} \bigl( \widehat{L} + \lambda _a \phi
    ^a \bigr) (g) \in \Sigma _L .
\end{equation*}
In this sense, $ \Sigma _L $ can be seen as the space consisting of
elements $ g \in N $, together with the Lagrange multipliers $\lambda$
constraining $g$ to $N$.

Thus, if $ \left( g, \lambda \right) \in \Sigma _L $, we have
\begin{equation*} 
  \bigl\langle \mathbb{F} ^- L \left( g, \lambda \right) , X
  \left( \alpha (g) \right) \bigr\rangle = \bigl\langle \mathrm{d}
  \bigl( \widehat{L} + \lambda _a \phi ^a \bigr) (g) ,
  \overrightarrow{ X } (g) \bigr\rangle  = \overrightarrow{ X } \bigl[
  \widehat{L} + \lambda _a \phi ^a \bigr] (g),
\end{equation*} 
and likewise
\begin{equation*}
  \bigl\langle \mathbb{F}  ^+ L \left( g, \lambda \right) , X \left( \beta (g)
  \right) \bigr\rangle = \overleftarrow{ X } \bigl[ \widehat{L} +
  \lambda _a \phi ^a \bigr] (g) ,
\end{equation*}
for all sections $ X \in \Gamma \left( A G \right) $.  Consequently,
let $ \left( g _1, \lambda _1 \right) , \ldots , \left( g _n , \lambda
  _n \right) $ be a sequence of groupoid elements and Lagrange
multipliers.  Then this is a solution of the discrete Lagrangian
dynamics when $ g _1, \ldots, g _n \in N $, i.e.,
\begin{equation*}
  \phi ^a \left( g _k \right) = 0 \text{ for all } a \in A, \quad k =
  1, \ldots, n ,
\end{equation*}
and when
\begin{equation*}
  \overleftarrow{ X } \bigl[ \widehat{L} + \left( \lambda _k \right)
  _a \phi ^a \bigr]  \left( g _k \right)  = \overrightarrow{ X }
  \bigl[ \widehat{L} + \left( \lambda _{ k + 1 } \right) _a \phi ^a
  \bigr] \left( g _{ k + 1 } \right) , \quad k = 1, \ldots, n - 1 ,
\end{equation*}
for all sections $ X \in \Gamma \left( A G \right) $.

\section{Morphisms, reduction, and Noether symmetries}
\label{sec:morphisms}

\subsection{Groupoid morphisms and reduced dynamics}
In order to study reduction of discrete constrained Lagrangian
systems, we first recall the definition of a morphism of Lie
groupoids.  After this, we introduce the slightly more specialized
notion of a morphism for discrete constrained Lagrangian systems,
which preserves not only the groupoid structure, but also that of the
constraint submanifolds and the Lagrangian functions.

\begin{definition}
  Given two Lie groupoids, $ G \rightrightarrows Q $ and $ G ^\prime
  \rightrightarrows Q ^\prime $, a smooth map $ \Phi \colon G
  \rightarrow G ^\prime $ is a \emph{morphism of Lie groupoids} if,
  for every composable pair $ \left( g, h \right) \in G _2 $, it
  satisfies $ \left( \Phi (g), \Phi (h) \right) \in G ^\prime _2 $ and
  $ \Phi \left( gh \right) = \Phi (g) \Phi (h) $.
\end{definition}

\begin{definition}
  Given two discrete constrained Lagrangian systems, denoted $ \left(
    G, N, L \right) $ and $ \left( G ^\prime, N ^\prime , L ^\prime
  \right) $, a smooth map $ \Phi \colon G \rightarrow G ^\prime $ is a
  \emph{morphism of discrete constrained Lagrangian systems} if it is
  a morphism of Lie groupoids, and additionally, it satisfies $ N =
  \Phi ^{-1} \left( N ^\prime \right) $ (i.e., $ g \in N
  \Leftrightarrow \Phi (g) \in N ^\prime $) and $ L = L ^\prime \circ
  \Phi $.
\end{definition}

\begin{example}
  Suppose that $ \left( G, N , L \right) $ and $ \left( G ^\prime , N
    ^\prime , L ^\prime \right) $ are discrete constrained Lagrangian
  systems, with constraint submanifolds  defined by
  \begin{equation*}
    N = \left\{ g \in G \;\middle\vert\; \phi ^a (g) = 0 ,\ a \in A
    \right\} , \qquad N ^\prime = \left\{ g ^\prime \in G ^\prime
      \;\middle\vert\; \phi ^{ \prime a } \left( g ^\prime \right)  =
      0 , \ a  \in A \right\} .
  \end{equation*}
  If $ \Phi \colon G \rightarrow G ^\prime $ is a morphism of Lie
  groupoids satisfying $ \phi ^a = \phi ^{ \prime a } \circ \Phi $,
  then clearly $ \phi ^a (g) = 0 \Leftrightarrow \phi ^{ \prime a }
  \left( \Phi (g) \right) = 0 $, so $ g \in N \Leftrightarrow \Phi (g)
  \in N ^\prime $.  If, furthermore, we have $ L = L ^\prime \circ
  \Phi $, then this implies that $ \Phi $ is a morphism of the
  discrete constrained Lagrangian systems.
\end{example}

A morphism $ \Phi \colon G \rightarrow G ^\prime $ of Lie groupoids
induces a smooth map $ \Phi _0 \colon Q \rightarrow Q ^\prime $ on the
base, which satisfies
\begin{equation*}
  \alpha ^\prime \circ \Phi = \Phi _0 \circ \alpha , \quad \beta
  ^\prime \circ \Phi = \Phi _0 \circ \beta , \quad \Phi \circ \epsilon
  = \epsilon ^\prime \circ \Phi _0 , \quad \Phi \circ i = i ^\prime
  \circ \Phi _0 .
\end{equation*}
Moreover, $\Phi$ induces a morphism $ A \Phi \colon A G \rightarrow A
G ^\prime $ of the corresponding Lie algebroids, and
\begin{gather*}
  \overleftarrow{ A \Phi (v) } (g) = T \ell _{ \Phi (g) } \left( A
    \Phi (v) \right) = T \Phi \left( T \ell _g (v) \right) = T \Phi
  \left( \overleftarrow{ v } (g) \right) ,\\
  \overrightarrow{ A \Phi (w) } (g) = - T \left( r _{ \Phi (g) } \circ
    i ^\prime \right) \left( A \Phi (w) \right) = T \Phi \left( - T
    \left( r _g \circ i \right) (w) \right) = T \Phi \left(
    \overrightarrow{ w } (g) \right) ,
\end{gather*}
for any $ g \in G $ and any $ v \in A _{ \beta (g) } G $, $ w \in A _{
  \alpha (g) } G $.  Consequently, two sections $ X \in \Gamma \left(
  A G \right) $ and $ X ^\prime \in \Gamma \left( A G ^\prime \right)
$ satisfy $ A \Phi \circ X = X ^\prime \circ \Phi _0 $ if and only if
$ T \Phi \circ \overleftarrow{ X } = \overleftarrow{ X ^\prime } \circ
\Phi $, or equivalently, $ T \Phi \circ \overrightarrow{ X } =
\overrightarrow{ X ^\prime } \circ \Phi $.  (See, e.g.,
\citet[p.~125]{Mackenzie2005}.)  That is, two sections are ``$ A \Phi
$-related'' if and only if their corresponding left-invariant (and
right-invariant) vector fields are $ \Phi $-related.  It is now
possible to define the dual notions of these relations on the
cotangent groupoid and its base.

\begin{definition}
  Two covectors $ \mu \in T _g ^\ast G $ and $ \mu ^\prime \in T ^\ast
  _{ \Phi (g) } G ^\prime $ are \emph{$ \mathit{ \Phi ^\ast }
    $-related} if $ \left\langle \mu , v \right\rangle = \left\langle
    \mu ^\prime , T \Phi (v) \right\rangle $ for all $ v \in T _g G $.
  Similarly, $ p \in A ^\ast _q G $ and $ p ^\prime \in A ^\ast _{
    \Phi _0 (q) } G ^\prime $ are \emph{$ \mathit{A ^\ast \Phi}
    $-related} if $ \left\langle p, v \right\rangle = \left\langle p
    ^\prime , A \Phi (v) \right\rangle $ for all $ v \in A _q G $.
\end{definition}

We now have the necessary equipment to state and prove the main
theorem on the reduction of discrete constrained Lagrangian systems,
which states how the structure of the cotangent groupoid and
Lagrangian submanifold transform under morphisms.  As an immediate
corollary, we see that the Lagrangian dynamics of $ \left( G, N , L
\right) $ reduce to those of $ \left( G ^\prime, N ^\prime , L ^\prime
\right) $, with respect to $ \Phi ^\ast $-relatedness.  (The content
of this theorem is shown diagrammatically in \autoref{fig:reduction}.)

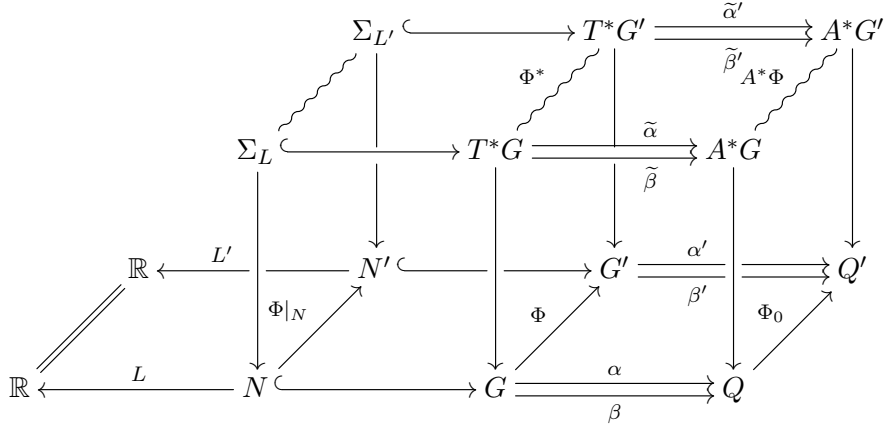
\begin{figure}
  \begin{equation*}
    \xymatrix @!=1.75pc{
      &&& \Sigma _{L ^\prime} \,\ar@{^(->}[rr] \ar'[d][dd] \ar@{~}[ld]
      &&
      T ^\ast G ^\prime \ar@<.5ex>[rr]^{ \widetilde{ \alpha } ^\prime }
      \ar@<-.5ex>[rr] _{ \widetilde{ \beta } ^\prime } \ar'[d][dd]
      \ar@{~}[ld] _{ \Phi ^\ast  }
      && A ^\ast G  ^\prime \ar[dd]
      \ar@{~}[ld] _{A ^\ast \Phi } \\
      &&\Sigma _L  \,\ar@{^(->}[rr] \ar[dd] &&
      T ^\ast G \ar@<.5ex>[rr]^(.65){\widetilde{\alpha}}
      \ar@<-.5ex>[rr]_(.65){\widetilde{\beta}}
      \ar[dd]&&
      A ^\ast G \ar[dd] \\
      &\mathbb{R} && N ^\prime \,\ar@{^(->}'[r][rr]
      \ar'[l][ll]_(.3){L ^\prime}
      && G ^\prime \ar@{-}@<.5ex>[r] ^(.7){\alpha  ^\prime }
      \ar@{-}@<-.5ex>[r]_(.7){ \beta ^\prime }  &
      \ar@<.5ex>[r] \ar@<-.5ex>[r] & Q ^\prime \\
      \mathbb{R} \ar@{=}[ur]&&N \,\ar@{^(->}[rr] \ar[ur]^{\Phi \rvert
        _N } \ar[ll]_L &&
      G \ar@<.5ex>[rr]^\alpha \ar@<-.5ex>[rr]_\beta  \ar[ur] ^\Phi
      &&  Q \ar[ur] ^{ \Phi _0 } \\
    }
  \end{equation*}
  \caption{A diagrammatic depiction of \autoref{thm:dclsMorphism},
    showing the maps and relations involved in the reduction of
    discrete constrained Lagrangian systems.\label{fig:reduction}}
\end{figure}

\begin{theorem}
  \label{thm:dclsMorphism}
  Let $ \Phi \colon G \rightarrow G ^\prime $ be a morphism of the
  discrete constrained Lagrangian systems $ \left( G, N , L \right) $
  and $ \left( G ^\prime, N ^\prime , L ^\prime \right) $, and suppose
  $ \mu \in T ^\ast G $ and $ \mu ^\prime \in T ^\ast G ^\prime $ are
  $ \Phi ^\ast $-related.  Then, the following are true:
  \begin{enumerate}
  \item if $ \mu ^\prime \in \Sigma _{ L ^\prime } $, then $ \mu \in
    \Sigma _L $;
  \item the sources $ \widetilde{ \alpha } (\mu) \in A ^\ast G $ and $
    \widetilde{ \alpha } ^\prime \left( \mu ^\prime \right) \in A
    ^\ast G ^\prime $ are $ A ^\ast \Phi $-related;
  \item the targets $ \widetilde{ \beta } (\mu) \in A ^\ast G $ and $
    \widetilde{ \beta } ^\prime \left( \mu ^\prime \right) \in A ^\ast
    G ^\prime $ are $ A ^\ast \Phi $-related.
  \end{enumerate}
\end{theorem}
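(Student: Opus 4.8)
The plan is to prove each of the three claims by unwinding definitions and using the $A\Phi$-relatedness dictionary established just before the theorem (namely, that $A\Phi\circ X = X'\circ\Phi_0$ iff $T\Phi\circ\overleftarrow{X} = \overleftarrow{X'}\circ\Phi$ iff $T\Phi\circ\overrightarrow{X} = \overrightarrow{X'}\circ\Phi$), together with Tulczyjew's characterization of $\Sigma_L$ from \autoref{thm:tulczyjew} and the formulas \eqref{eqn:cotangentSourceTarget} for $\widetilde\alpha,\widetilde\beta$ on the cotangent groupoid. Throughout, write $g = \pi_G(\mu)$ and $g' = \Phi(g) = \pi_{G'}(\mu')$, which is forced by $\Phi^\ast$-relatedness.

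For (i): assume $\mu'\in\Sigma_{L'}$. By \autoref{thm:tulczyjew}, this means $g'\in N'$ and $\langle\mu', v'\rangle = \langle\mathrm{d}L', v'\rangle$ for all $v'\in T_{g'}N'$. Since $N = \Phi^{-1}(N')$ and $g'\in N'$, we get $g\in N$. Now take any $v\in T_g N$; then $T\Phi(v)\in T_{g'}N'$ because $\Phi(N)\subset N'$, so using $\Phi^\ast$-relatedness and then $L = L'\circ\Phi$ (hence $\mathrm{d}L = \Phi^\ast\,\mathrm{d}L'$ along $N$), we compute $\langle\mu, v\rangle = \langle\mu', T\Phi(v)\rangle = \langle\mathrm{d}L'(g'), T\Phi(v)\rangle = \langle\mathrm{d}L(g), v\rangle$. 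By \autoref{thm:tulczyjew} again, $\mu\in\Sigma_L$.

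For (ii) and (iii): these do not use the constraint structure at all and hold for any $\Phi^\ast$-related covectors. To show $\widetilde\alpha(\mu)$ and $\widetilde\alpha'(\mu')$ are $A^\ast\Phi$-related, fix $w\in A_{\alpha(g)}G$ and extend it to a section $X\in\Gamma(AG)$; choose a section $X'\in\Gamma(AG')$ with $A\Phi\circ X = X'\circ\Phi_0$ (possible at least locally, which suffices since the relatedness condition is pointwise — or one can work directly with the vectors, invoking the formula $\overrightarrow{A\Phi(w)}(g) = T\Phi(\overrightarrow{w}(g))$ displayed before the theorem). Then, using \eqref{eqn:cotangentSourceTarget}, the identity $\overrightarrow{A\Phi(w)}(g) = T\Phi(\overrightarrow{w}(g))$, and $\Phi^\ast$-relatedness of $\mu,\mu'$,
\begin{equation*}
  \bigl\langle \widetilde\alpha(\mu), w\bigr\rangle
  = \bigl\langle \mu, \overrightarrow{X}(g)\bigr\rangle
  = \bigl\langle \mu', T\Phi\bigl(\overrightarrow{X}(g)\bigr)\bigr\rangle
  = \bigl\langle \mu', \overrightarrow{X'}(g')\bigr\rangle
  = \bigl\langle \widetilde\alpha'(\mu'), X'(\alpha'(g'))\bigr\rangle
  = \bigl\langle \widetilde\alpha'(\mu'), A\Phi(w)\bigr\rangle,
\end{equation*}
which is exactly $A^\ast\Phi$-relatedness of the sources. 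The argument for (iii) is identical with $\overrightarrow{\phantom{X}}$ replaced by $\overleftarrow{\phantom{X}}$, $\alpha$ by $\beta$, and $w\in A_{\alpha(g)}G$ by $v\in A_{\beta(g)}G$.

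I expect the only real subtlety to be the step of realizing a single tangent vector $w\in A_q G$ as the value of a global section $X$ that is $A\Phi$-related to some $X'$ on $G'$ — in general such a global related section need not exist. The clean way around this, which I would adopt, is to phrase the whole computation in terms of the right-/left-invariant vector field formulas $\overrightarrow{A\Phi(w)}(g) = T\Phi(\overrightarrow{w}(g))$ and $\overleftarrow{A\Phi(v)}(g) = T\Phi(\overleftarrow{v}(g))$, which are stated (with proof sketch) immediately before the theorem and hold pointwise without any global section hypothesis; this sidesteps the obstacle entirely. Everything else is a routine chase through \eqref{eqn:cotangentSourceTarget} and the definitions of $\Phi^\ast$- and $A^\ast\Phi$-relatedness.
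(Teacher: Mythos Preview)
Your proposal is correct and follows essentially the same argument as the paper: part (i) is identical, and for (ii)--(iii) the paper works directly with the pointwise identities $\overrightarrow{A\Phi(w)}(g) = T\Phi(\overrightarrow{w}(g))$ and $\overleftarrow{A\Phi(v)}(g) = T\Phi(\overleftarrow{v}(g))$ rather than passing through sections---exactly the ``clean way around'' you identify at the end. Your anticipated subtlety about $A\Phi$-related global sections is real but, as you note, is entirely avoided by the pointwise formulation, which is what the paper does from the start.
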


\begin{proof}
  To prove (1), we begin by denoting $ g = \pi _G (\mu) $ and $ g
  ^\prime = \pi _{ G ^\prime } \left( \mu ^\prime \right) $; since $
  \mu $ and $ \mu ^\prime $ are $ \Phi ^\ast $-related, we have $ \Phi
  (g) = g ^\prime $.  If $ \mu ^\prime \in \Sigma _{L ^\prime } $,
  then $ \Phi (g) = g ^\prime \in N ^\prime $, so $ g \in \Phi ^{-1}
  \left( N ^\prime \right) = N $.  Furthermore, for all $ v \in T _g N
  $,
  \begin{multline*}
    \left\langle \mu , v \right\rangle = \left\langle \mu ^\prime , T
      \Phi (v) \right\rangle =
    \left\langle \mathrm{d} L ^\prime , T \Phi (v) \right\rangle \\
    = \left\langle \Phi ^\ast \left( \mathrm{d} L ^\prime \right) , v
    \right\rangle = \left\langle \mathrm{d} \left( L ^\prime \circ
        \Phi \right) , v \right\rangle = \left\langle \mathrm{d} L , v
    \right\rangle ,
  \end{multline*}
  and thus $ \mu \in \Sigma _L $.

  To prove (2), suppose that $ v \in A _{ \alpha (g) } G $.  Then
  \begin{equation*}
    \bigl\langle \widetilde{ \alpha } (\mu) , v \bigr\rangle =
    \bigl\langle \mu , \overrightarrow{ v } \bigr\rangle =
    \bigl\langle \mu ^\prime, T \Phi ( \overrightarrow{ v }
    ) \bigr\rangle = \bigl\langle \mu ^\prime , \overrightarrow{ A
      \Phi (v) } \bigr\rangle = \bigl\langle \widetilde{ \alpha }
    ^\prime \left( \mu ^\prime \right) , A \Phi (v) \bigr\rangle ,
  \end{equation*}
  so $ \widetilde{ \alpha } (\mu) $ and $ \widetilde{ \alpha }
  ^\prime \left( \mu ^\prime \right) $ are $ A ^\ast \Phi $-related.
  Likewise, to prove (3), let $ v \in A _{ \beta (g) } G $.  Then
  \begin{equation*}
    \bigl\langle \widetilde{ \beta } (\mu) , v \bigr\rangle =
    \bigl\langle \mu , \overleftarrow{ v } \bigr\rangle =
    \bigl\langle \mu ^\prime, T \Phi ( \overleftarrow{ v }
    ) \bigr\rangle = \bigl\langle \mu ^\prime , \overleftarrow{ A
      \Phi (v) } \bigr\rangle = \bigl\langle \widetilde{ \beta }
    ^\prime \left( \mu ^\prime \right) , A \Phi (v) \bigr\rangle ,
  \end{equation*}
  and thus $ \widetilde{ \beta } (\mu) $ and $ \widetilde{ \beta }
  ^\prime \left( \mu ^\prime \right) $ are $ A ^\ast \Phi $-related.
\end{proof}

\begin{corollary}
  \label{cor:dclsMorphism}
  Let $ \Phi \colon G \rightarrow G ^\prime $ be a morphism of the
  discrete constrained Lagrangian systems $ \left( G, N , L \right) $
  and $ \left( G ^\prime, N ^\prime , L ^\prime \right) $.  If $ \mu
  _1 ^\prime , \ldots, \mu _n ^\prime \in T ^\ast G ^\prime $
  satisfies the discrete Lagrangian dynamics of $ \left( G ^\prime, N
    ^\prime , L ^\prime \right) $, then any $ \Phi ^\ast $-related
  sequence $ \mu _1, \ldots, \mu _n \in T ^\ast G $ satisfies the
  discrete Lagrangian dynamics of $ \left( G, N , L \right) $.
\end{corollary}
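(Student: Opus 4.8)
The plan is to unwind \autoref{def:discreteLagrangianDynamics} and verify its two clauses for the sequence $\mu_1,\dots,\mu_n$ one index at a time, each time transporting the corresponding fact about $\mu_1',\dots,\mu_n'$ across $\Phi$ by means of \autoref{thm:dclsMorphism}. So fix $k$ and write $g_k=\pi_G(\mu_k)$, $g_k'=\pi_{G'}(\mu_k')$; since $\mu_k$ and $\mu_k'$ are $\Phi^\ast$-related we have $g_k'=\Phi(g_k)$.

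Step one is membership in $\Sigma_L$. Because $\mu_1',\dots,\mu_n'$ satisfies the discrete Lagrangian dynamics of $(G',N',L')$, each $\mu_k'$ lies in $\Sigma_{L'}$; part~(1) of \autoref{thm:dclsMorphism} then gives $\mu_k\in\Sigma_L$ directly. Iterating over $k$ disposes of the first clause of \autoref{def:discreteLagrangianDynamics}.

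Step two is the composability condition $\widetilde\beta(\mu_k)=\widetilde\alpha(\mu_{k+1})$ in $A^\ast G$, for $k=1,\dots,n-1$. Here the idea is: apply part~(3) of \autoref{thm:dclsMorphism} to the pair $(\mu_k,\mu_k')$ to see that $\widetilde\beta(\mu_k)$ is $A^\ast\Phi$-related to $\widetilde\beta'(\mu_k')$, apply part~(2) to $(\mu_{k+1},\mu_{k+1}')$ to see that $\widetilde\alpha(\mu_{k+1})$ is $A^\ast\Phi$-related to $\widetilde\alpha'(\mu_{k+1}')$, and then invoke the hypothesis that the primed sequence is a solution, which gives $\widetilde\beta'(\mu_k')=\widetilde\alpha'(\mu_{k+1}')$. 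Thus $\widetilde\beta(\mu_k)$ and $\widetilde\alpha(\mu_{k+1})$ are both $A^\ast\Phi$-related to one and the same covector $p'\in A^\ast G'$. To finish, I would pair against an arbitrary $v\in A_qG$, where $q$ is the common base point of $\widetilde\beta(\mu_k)$ and $\widetilde\alpha(\mu_{k+1})$: both pairings equal $\langle p',A\Phi(v)\rangle$, and nondegeneracy of the pairing between $A_qG$ and $A_q^\ast G$ then yields $\widetilde\beta(\mu_k)=\widetilde\alpha(\mu_{k+1})$.

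The one place that needs care --- and the only real obstacle --- is the phrase ``where $q$ is the common base point'' in the previous step: before comparing $\widetilde\beta(\mu_k)$ and $\widetilde\alpha(\mu_{k+1})$ one must know they lie in the same fibre of $A^\ast G\to Q$, i.e.\ that $\beta(g_k)=\alpha(g_{k+1})$. From the primed solution one only gets $\beta'(g_k')=\alpha'(g_{k+1}')$, hence (by the base-map compatibility of $\Phi$) $\Phi_0(\beta(g_k))=\Phi_0(\alpha(g_{k+1}))$, which is weaker. The clean way around this is to read ``$\Phi^\ast$-related sequence'' as also requiring that $g_1,\dots,g_n$ be a composable sequence in $G$ --- a natural demand, since one is lifting a discrete trajectory in $G'$ to one in $G$; with that understood, $\beta(g_k)=\alpha(g_{k+1})=:q$ and the argument above goes through. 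Everything else is bookkeeping, and the corollary follows by running Steps one and two for every $k$.
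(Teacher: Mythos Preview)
Your argument is essentially identical to the paper's: invoke part~(1) of \autoref{thm:dclsMorphism} for membership in $\Sigma_L$, then chain parts~(2) and~(3) together with the hypothesis $\widetilde\beta'(\mu_k')=\widetilde\alpha'(\mu_{k+1}')$ and pair against arbitrary $v\in A_qG$ to conclude $\widetilde\beta(\mu_k)=\widetilde\alpha(\mu_{k+1})$.

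You are right to flag the base-point issue, and in fact you are being more careful than the paper here. The paper's proof simply writes ``for any $v\in A_{\beta(g_k)}G=A_{\alpha(g_{k+1})}G$'' without justifying the equality $\beta(g_k)=\alpha(g_{k+1})$; as you observe, the hypotheses only yield $\Phi_0(\beta(g_k))=\Phi_0(\alpha(g_{k+1}))$. Your proposed reading---that a ``$\Phi^\ast$-related sequence'' lifting a discrete trajectory should itself have composable base points---is the natural one and is implicitly what the paper intends, though it does not say so explicitly.
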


\begin{proof}
  By \autoref{thm:dclsMorphism}, $ \mu _k ^\prime \in \Sigma _{ L
    ^\prime } $ implies $ \mu _k \in \Sigma _L $ for $ k = 1, \ldots,
  n $.  Furthermore, for any $ v \in A _{ \beta \left( g _k \right) }
  G = A _{ \alpha \left( g _{ k + 1 } \right) } G $, we have
  \begin{equation*}
    \bigl\langle \widetilde{ \beta } \left( \mu _k \right) , v
    \bigr\rangle = \bigl\langle \widetilde{ \beta } ^\prime \left( \mu
      _k ^\prime \right) , A \Phi (v) \bigr\rangle = \bigl\langle
    \widetilde{ \alpha } ^\prime \left( \mu _{ k + 1 } ^\prime \right) , A
    \Phi (v) \bigr\rangle = \bigl\langle \widetilde{ \alpha } \left(
      \mu _{ k + 1 } \right) , v \bigr\rangle ,
  \end{equation*}
  so $ \widetilde{ \beta } \left( \mu _k \right) = \widetilde{ \alpha
  } \left( \mu _{ k + 1 } \right) $ for $ k = 1, \ldots, n - 1
  $. Thus, $ \mu _1, \ldots, \mu _n $ satisfies the discrete
  Lagrangian dynamics of $ \left( G, N , L \right) $.
\end{proof}

\begin{remark}
  If $ \Phi \colon G \rightarrow G ^\prime $ is a submersion, then the
  converse is true as well.  Since every $ v ^\prime \in T _{ \Phi (g)
  } N ^\prime $ can be written as $ v ^\prime = T \Phi (v) $ for $ v
  \in T _g N $, part (1) of \autoref{thm:dclsMorphism} can be
  strengthened to $ \mu \in \Sigma _L \Leftrightarrow \mu ^\prime \in
  \Sigma _{ L ^\prime } $.  Furthermore, $ A\Phi $ is a fiberwise
  surjection if (and only if) $ \Phi $ is a submersion
  (\citet[Proposition 3.5.15]{Mackenzie2005}).  Therefore, with this
  additional assumption, \autoref{cor:dclsMorphism} can be
  strengthened to say that the sequence $ \mu _1 , \ldots, \mu _n \in
  T ^\ast G $ satisfies the discrete Lagrangian dynamics if and only
  if the $ \Phi ^\ast $-related sequence $ \mu _1 ^\prime, \ldots, \mu
  _n ^\prime \in T ^\ast G ^\prime $ does.
\end{remark}

\subsection{Noether symmetries and constants of the motion} Next, we
extend the notion of discrete Noether symmetries and constants of the
motion, as defined for unconstrained systems on Lie groupoids in
\citet{MaMaMa2006}, to apply to discrete constrained Lagrangian
systems.

\begin{definition}
  A section $ X \in \Gamma \left( A G \right) $ is said to be a
  \emph{Noether symmetry} of the discrete constrained Lagrangian
  system $ \left( G, N , L \right) $ if there exists a function $ f
  \in C ^\infty (Q) $ such that
  \begin{equation*}
    \bigl\langle \widetilde{ \alpha } (\mu)
    , X \left( \alpha (g) \right) \bigr\rangle + f \left( \alpha (g)
    \right)  =     \bigl\langle \widetilde{ \beta } (\mu) , X \left( \beta (g)
    \right) \bigr\rangle + f \left( \beta (g)
    \right) 
  \end{equation*}
  for all $ \mu \in \Sigma _L $, where we denote $ g = \pi _G (\mu) $.
\end{definition}

For each Noether symmetry of a discrete constrained Lagrangian system,
there is a corresponding constant of the motion, which is preserved by
the Lagrangian dynamics; this is the discrete version of Noether's
theorem.

\begin{theorem}
  If $X \in \Gamma \left( A G \right) $ is a Noether symmetry of the
  discrete constrained Lagrangian system $ \left( G, N , L \right) $,
  then the function $ F _X \colon \Sigma _L \rightarrow \mathbb{R} $
  defined by
  \begin{equation*}
    F _X (\mu) = \left\langle \widetilde{ \alpha } (\mu) , X \left(
        \alpha (g) \right) \right\rangle + f \left( \alpha (g) \right)
    = \bigl\langle \widetilde{ \beta } (\mu) , X \left( \beta (g)
    \right) \bigr\rangle + f \left( \beta (g)
    \right) ,
  \end{equation*}
  where $ g = \pi _G (\mu) $, is a constant of the motion. That is, if
  $ \mu _1, \ldots, \mu _n \in T ^\ast G $ satisfies the discrete
  Lagrangian dynamics, then $ F _X \left( \mu _k \right) = F _X \left(
    \mu _{ k + 1 } \right) $ for $ k = 1, \ldots, n - 1 $.
\end{theorem}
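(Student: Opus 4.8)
The plan is to verify directly that the two expressions given for $F_X(\mu)$ agree, and then to use the Noether symmetry condition together with the discrete Lagrangian dynamics to chain the values along a trajectory. First I would observe that the equality of the two displayed formulas for $F_X$ is precisely the Noether symmetry condition applied at $\mu \in \Sigma_L$ with $g = \pi_G(\mu)$, so the function $F_X$ is well defined by either formula.

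Next, I would take a trajectory $\mu_1, \ldots, \mu_n \in T^\ast G$ satisfying the discrete Lagrangian dynamics of $(G, N, L)$, write $g_k = \pi_G(\mu_k)$, and fix $k \in \{1, \ldots, n-1\}$. By \autoref{def:discreteLagrangianDynamics} we have $\widetilde{\beta}(\mu_k) = \widetilde{\alpha}(\mu_{k+1})$; in particular this forces $\beta(g_k) = \alpha(g_{k+1})$, since $\widetilde{\beta}$ and $\widetilde{\alpha}$ cover $\beta$ and $\alpha$ on the base $Q$. The plan is then to evaluate $F_X(\mu_k)$ using the ``$\widetilde{\beta}$'' formula and $F_X(\mu_{k+1})$ using the ``$\widetilde{\alpha}$'' formula:
\begin{equation*}
  F_X(\mu_k) = \bigl\langle \widetilde{\beta}(\mu_k), X(\beta(g_k)) \bigr\rangle + f(\beta(g_k)),
  \qquad
  F_X(\mu_{k+1}) = \bigl\langle \widetilde{\alpha}(\mu_{k+1}), X(\alpha(g_{k+1})) \bigr\rangle + f(\alpha(g_{k+1})).
\end{equation*}
Since $\beta(g_k) = \alpha(g_{k+1}) =: q$, the vectors $X(\beta(g_k))$ and $X(\alpha(g_{k+1}))$ are the same element $X(q) \in A_q G$, and the pairings $\bigl\langle \widetilde{\beta}(\mu_k), X(q) \bigr\rangle$ and $\bigl\langle \widetilde{\alpha}(\mu_{k+1}), X(q) \bigr\rangle$ are equal because $\widetilde{\beta}(\mu_k) = \widetilde{\alpha}(\mu_{k+1})$ as elements of $A_q^\ast G$. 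The additive terms $f(\beta(g_k))$ and $f(\alpha(g_{k+1}))$ coincide for the same reason. Hence $F_X(\mu_k) = F_X(\mu_{k+1})$, and since $k$ was arbitrary this gives the claimed conservation.

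The only real subtlety—hardly an obstacle—is the bookkeeping of basepoints: one must use the ``$\beta$-version'' of the $F_X$ formula at step $k$ and the ``$\alpha$-version'' at step $k+1$, so that the matching condition $\widetilde{\beta}(\mu_k) = \widetilde{\alpha}(\mu_{k+1})$ can be invoked directly, with no residual dependence on the Noether function $f$ or on the section $X$ at mismatched points. The two formulas for $F_X$ are interchangeable precisely because of the Noether condition, which is what makes this switch legitimate; everything else is a direct substitution.
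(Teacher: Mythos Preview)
Your proof is correct and matches the paper's approach essentially line for line: use the $\widetilde{\beta}$-expression for $F_X(\mu_k)$, the $\widetilde{\alpha}$-expression for $F_X(\mu_{k+1})$, and invoke $\widetilde{\beta}(\mu_k) = \widetilde{\alpha}(\mu_{k+1})$ together with $\beta(g_k) = \alpha(g_{k+1})$. The only addition you make is the explicit remark that the well-definedness of $F_X$ is precisely the Noether condition, which the paper leaves implicit.
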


\begin{proof}
  If $ \mu _1, \ldots, \mu _n \in T ^\ast G $ satisfies the discrete
  Lagrangian dynamics, then by definition, $ \mu _k \in \Sigma _L $
  for $ k = 1, \ldots, n $ and $ \widetilde{ \beta } \left( \mu _k
  \right) = \widetilde{ \alpha } \left( \mu _{ k + 1 } \right) $ for $
  k = 1, \ldots, n - 1 $.  Therefore, 
  \begin{align*}
    F _X \left( \mu _k \right) &= \bigl\langle \widetilde{ \beta }
    \left( \mu _k \right) , X \left( \beta \left( g _k \right) \right)
    \bigr\rangle + f \left( \beta \left( g _k \right) \right) \\
    &= \bigl\langle \widetilde{ \alpha } \left( \mu _{k+1} \right) , X
    \left( \alpha \left( g _{k+1} \right) \right) \bigr\rangle + f
    \left( \alpha \left( g _{k+1} \right) \right) \\
    &= F _X \left( \mu _{ k + 1 } \right) ,
  \end{align*}
  for $ k = 1 , \ldots, n - 1 $, which completes the proof.
\end{proof}

When $ f = 0 $, we say $L$ is \emph{invariant} with respect to $X$,
and the conserved quantity is simply 
\begin{equation*}
  F _X (\mu) = \left\langle \widetilde{ \alpha } (\mu) , X \left(
      \alpha (g) \right) \right\rangle = \bigl\langle \widetilde{
    \beta } (\mu) , X \left( \beta (g) \right) \bigr\rangle =
  \bigl\langle \mathbb{F}  ^\pm L (\mu) , X \bigr\rangle .
\end{equation*}
More generally, when $ f \neq 0 $, we say $L$ is
\emph{quasi-invariant} with respect to $X$. In continuous Lagrangian
mechanics, this is analogous to the fact that Noether's theorem holds
not only for symmetries, which leave the action invariant, but also
for quasi-symmetries, which change the action by a boundary term; this
extra boundary term corresponds, here, to the function $f$.

As an illustration, consider the unconstrained case $ N = G $. From
the definition above, $ X \in \Gamma ( A G ) $ is a Noether symmetry
when
\begin{equation*}
  \bigl\langle \widetilde{ \alpha } \circ \mathrm{d} L (g) 
  , X \left( \alpha (g) \right) \bigr\rangle + f \left( \alpha (g)
  \right)  = \bigl\langle \widetilde{ \beta } \circ \mathrm{d} L (g) ,
  X \left( \beta (g) \right) \bigr\rangle + f \left( \beta (g) \right) ,
\end{equation*} 
for all $ g \in G $.  Applying the definitions of $ \widetilde{ \alpha
} $ and $ \widetilde{ \beta } $, this is equivalent to
\begin{equation*}
  \overrightarrow{ X } [L] (g) + f \bigl( \alpha (g) \bigr) =
  \overleftarrow{ X } [L] (g) + f \bigl( \beta (g) \bigr) .
\end{equation*} 
If the discrete Euler--Lagrange equations $ \overleftarrow{ X } [L] (g
_k ) = \overrightarrow{ X } [L] ( g _{ k + 1 } ) $ hold, then
\begin{align*}
  F _X \bigl( \mathrm{d} L ( g _k ) \bigr) &= \overleftarrow{ X }
  [L] (g _k ) + f \bigl( \beta  (g _k ) \bigr) \\
  &= \overrightarrow{ X } [L] ( g _{ k + 1 } ) + f \bigl( \alpha ( g
  _{ k + 1 } ) \bigr) \\
  &= F _X \bigl( \mathrm{d} L ( g _{ k + 1 } ) \bigr),
\end{align*} 
where the second line uses only the discrete Euler--Lagrange equations
and the composability of $ g _k $ and $ g _{ k + 1 } $. Thus, $ F _X $
is indeed a conserved quantity. (This is precisely the argument
appearing in \citet{MaMaMa2006} for the unconstrained case.)

\section{Variational principles}
\label{sec:variational}

Discrete constrained Lagrangian systems are a generalization of
variational integrators.  However, up to this point, we have only
treated the discrete Lagrangian $ L \colon N \subset G \rightarrow
\mathbb{R} $ as a generating function, and have not yet made any
mention of its variational interpretation.  In this section, we show
that, under certain regularity conditions, the Lagrangian dynamics of
$ \left( G, N, L \right) $ are precisely the critical points of a
discrete constrained action sum.

\subsection{Admissible trajectories and variations}
Given a fixed element $ g \in G $, a trajectory $ g _1, \ldots, g _n
\in G $ is said to be \emph{admissible} if $ g _k \in N $ for $ k = 1,
\ldots, n $, and if the elements are composable with product $ g _1
\cdots g _n = g $.  Let $ \mathcal{C} _{ g, N } ^n $ denote the set of
admissible trajectories of length $n$.  Without loss of generality, we
can take $ n = 2 $, so that the space of admissible trajectories is
\begin{equation*}
  \mathcal{C} ^2 _{ g, N } = \left\{ \left( g _1, g _2 \right) \in G
    _2 \cap \left( N \times N \right) \;\middle\vert\; g _1 g _2 = g
  \right\} = \left( m \rvert _{ G _2 \cap \left( N \times N \right) }
  \right) ^{-1} (g) .
\end{equation*}
Finally, assume that $ G _2 \cap \left( N \times N \right) $ is a
submanifold of $ N \times N $, and that the map $ m \rvert _{ G _2
  \cap \left( N \times N \right) } \colon G _2 \cap \left( N \times N
\right) \rightarrow G $ has constant rank in an open neighborhood of $
\mathcal{C} ^2 _{ g, N } $.  This is sufficient to ensure (by the
subimmersion theorem, cf.~\citet{AbMaRa1988}) that $ \mathcal{C} ^2 _{
  g, N } $ is itself a submanifold of $ G _2 \cap \left( N \times N
\right) $.

Now, a ``variation'' of an admissible trajectory $ \left( g _1, g _2
\right) \in \mathcal{C} ^2 _{ g, N } $ is an element of the tangent
space
\begin{multline*}
  T _{ \left( g _1, g _2 \right) } \mathcal{C} ^2 _{ g, N } = \bigl\{
  \left( v _{ g _1 }, v _{ g _2 } \right) \in T _{ g _1 } N \times T
  _{ g _2 } N \;\bigm\vert\; T _{ g _1 } \beta \left( v _{ g _1 }
  \right) = T _{ g _2 } \alpha \left( v _{ g _2 } \right) \\
  \text{and } T _{ \left( g _1 , g _2 \right) } m \left( v _{ g _1 },
    v _{ g _2 } \right) = 0 \bigr\} .
\end{multline*}
In fact, letting $ q = \beta \left( g _1 \right) = \alpha \left( g _2
\right) $, this tangent space is isomorphic to a vector subspace $
\left( A _q G \right) _{ \left( N, g _1, g _2 \right) } \subset A _q G
$, defined by
\begin{equation*}
  \left( A _q G \right) _{ \left( N, g _1, g _2 \right) } = \left\{  v
    \in A _q G \;\middle\vert\; \overleftarrow{ v } \left( g _1
    \right) \in T _{ g _1 } N \text{ and } \overrightarrow{ v } \left(
    g _2 \right) \in T _{ g _2 } N \right\} ,
\end{equation*}
where we recall that $ \overleftarrow{ v } \left( g _1 \right) = T _{
  \epsilon (q) } \ell _{ g _1 } (v) $ and $ \overrightarrow{ v }
\left( g _2 \right) = -T _{ \epsilon (q) } \left( r _{ g _2 } \circ i
\right) (v) $.  The following theorem establishes this isomorphism
explicitly.

\begin{theorem}
  \label{thm:variationalIsomorphism}
  The linear map $ \varphi _{ \left( N, g _1, g _2 \right) } \colon
  \left( A _q G \right) _{ \left( N, g _1 , g _2 \right) } \rightarrow
  T _{ \left( g _1, g _2 \right) } \mathcal{C} ^2 _{ g, N } $, defined
  by $ v \mapsto \left( \overleftarrow{ v } \left( g _1 \right) , -
    \overrightarrow{ v } \left( g _2 \right) \right) $, is an
  isomorphism.
\end{theorem}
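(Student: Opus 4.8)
The plan is to show that $\varphi_{(N,g_1,g_2)}$ is well-defined, linear, injective, and surjective. Linearity will be automatic once well-definedness is established, since $v \mapsto \overleftarrow{v}(g_1)$ and $v \mapsto \overrightarrow{v}(g_2)$ are linear in $v$. So the real content is three claims: (a) if $v \in (A_q G)_{(N,g_1,g_2)}$, then $(\overleftarrow{v}(g_1), -\overrightarrow{v}(g_2))$ actually lies in $T_{(g_1,g_2)}\mathcal{C}^2_{g,N}$; (b) the map is injective; (c) the map is surjective. Throughout, I would use the standard facts about the tangent to multiplication and inversion recalled earlier in the paper, in particular \eqref{eqn:tangentInversion} and the relation $T_{(g_1,g_2)}m(\overleftarrow{v}(g_1), w) = \overleftarrow{v}(g_1 g_2) + T\ell_{g_1}(w)$-type identities for composable tangent vectors (these follow from differentiating $m(\ell_{g_1}(h_1(t)), h_2(t))$ and its variants).

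For well-definedness (claim (a)): by hypothesis $\overleftarrow{v}(g_1) \in T_{g_1} N$ directly, and $-\overrightarrow{v}(g_2) \in T_{g_2} N$ since $\overrightarrow{v}(g_2) \in T_{g_2} N$ by hypothesis and $T_{g_2}N$ is a subspace. Next I must check the two defining conditions of $T_{(g_1,g_2)}\mathcal{C}^2_{g,N}$. The matching condition $T_{g_1}\beta(\overleftarrow{v}(g_1)) = T_{g_2}\alpha(-\overrightarrow{v}(g_2))$ should reduce, via the invariance properties, to $\rho(v)(q) = \rho(v)(q)$: since $\overleftarrow{v}$ is $\alpha$-vertical and left-invariant, $T\beta(\overleftarrow{v}(g_1)) = T_{\epsilon(q)}\beta(v) = \rho(v)(q)$, and similarly for the right-invariant field, using \eqref{eqn:tangentInversion}, $T\alpha(-\overrightarrow{v}(g_2)) = \rho(v)(q)$. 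For the condition $T_{(g_1,g_2)}m(\overleftarrow{v}(g_1), -\overrightarrow{v}(g_2)) = 0$: I would compute $T_{(g_1,g_2)}m(\overleftarrow{v}(g_1), -\overrightarrow{v}(g_2))$ by splitting the two slots. Differentiating $t \mapsto m(g_1 c(t), g_2)$ where $c$ is a curve in $\alpha^{-1}(q)$ with $\dot c(0)$ corresponding appropriately — more cleanly, using bilinearity of $Tm$ on the product, $T_{(g_1,g_2)}m(\overleftarrow{v}(g_1), -\overrightarrow{v}(g_2)) = T_{(g_1,g_2)}m(\overleftarrow{v}(g_1), 0) + T_{(g_1,g_2)}m(0_{g_1}, -\overrightarrow{v}(g_2))$. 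The first term equals $T r_{g_2}(\overleftarrow{v}(g_1))$, which by left-invariance of $\overleftarrow{v}$ and the cocycle identity equals $\overleftarrow{v}(g_1 g_2) = \overleftarrow{v}(g)$. The second term: writing $\overrightarrow{v}(g_2) = -T(r_{g_2}\circ i)(v)$ and using $m(\epsilon(q)^{-1}\cdot, g_2)$-type curves together with \eqref{eqn:tangentInversion}, I expect it to equal $-\overleftarrow{v}(g) + (\text{a correction term living in } T_g(\epsilon\circ\beta))$; the correction should vanish because the two base-point contributions cancel. So the sum is zero. (This is the step I flag below as the main obstacle — the bookkeeping with $Tm$ and $Ti$ is delicate, though it is exactly the kind of computation \autoref{app:bisections} is set up to handle, and it parallels the derivation of the discrete Euler--Lagrange equations in \autoref{ex:unconstrained}.)

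For injectivity (claim (b)): if $\overleftarrow{v}(g_1) = 0$, then since $\overleftarrow{v}(g_1) = T_{\epsilon(q)}\ell_{g_1}(v)$ and $\ell_{g_1}$ is a diffeomorphism from $\alpha^{-1}(\beta(g_1))$ to $\alpha^{-1}(\alpha(g_1))$, its differential is injective, forcing $v = 0$. (Alternatively one could use the right-invariant slot and that $r_{g_2}\circ i$ is a diffeomorphism.) For surjectivity (claim (c)): given $(v_{g_1}, v_{g_2}) \in T_{(g_1,g_2)}\mathcal{C}^2_{g,N}$, define $v \in A_q G$ by $v = T\ell_{g_1}^{-1}(v_{g_1}) = (T_{\epsilon(q)}\ell_{g_1})^{-1}(v_{g_1})$ — this makes sense because $v_{g_1} \in T_{g_1}N \subset T_{g_1}\alpha^{-1}(\beta(g_1))$? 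Not quite: $v_{g_1}$ need not be $\alpha$-vertical a priori. Here I would use the matching/zero conditions: the condition $T_{(g_1,g_2)}m(v_{g_1},v_{g_2}) = 0$ together with $T\beta(v_{g_1}) = T\alpha(v_{g_2})$ should force $v_{g_1}$ to be $\alpha$-vertical and $v_{g_2}$ to be $\beta$-vertical (differentiating $m(g_1,g_2) = g$ constant in directions tangent to fibers, and projecting to the base via $\alpha\circ m$ and $\beta\circ m$). Once $v_{g_1}$ is known $\alpha$-vertical at $g_1$, set $v = (T_{\epsilon(q)}\ell_{g_1})^{-1}(v_{g_1}) \in A_q G$; then $\overleftarrow{v}(g_1) = v_{g_1} \in T_{g_1}N$ by construction. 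It remains to show $\overrightarrow{v}(g_2) = -v_{g_2}$, which should follow from the zero condition $T_{(g_1,g_2)}m(v_{g_1},v_{g_2}) = 0$ rewritten using the computation from claim (a): that identity says $T_{(g_1,g_2)}m(\overleftarrow{v}(g_1), w) = 0 \iff w = -\overrightarrow{v}(g_2)$ (injectivity of the second-slot map $w \mapsto T_{(g_1,g_2)}m(\overleftarrow{v}(g_1),w)$ on $\beta$-vertical vectors, which holds since $h \mapsto g_1 h$ is a diffeomorphism on $\alpha$-fibers). Hence $v_{g_2} = -\overrightarrow{v}(g_2) \in T_{g_2}N$, so $v \in (A_q G)_{(N,g_1,g_2)}$ and $\varphi_{(N,g_1,g_2)}(v) = (v_{g_1},v_{g_2})$. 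The main obstacle, as noted, is the careful manipulation of $Tm$ in both claims (a) and (c); everything else is a routine consequence of the diffeomorphism property of left/right translations.
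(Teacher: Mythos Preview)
Your three-stage outline (well-defined, injective, surjective) and your injectivity argument coincide with the paper's. The genuine gap is in step~(a): the decomposition
\[
T_{(g_1,g_2)}m\bigl(\overleftarrow{v}(g_1),-\overrightarrow{v}(g_2)\bigr)
= T_{(g_1,g_2)}m\bigl(\overleftarrow{v}(g_1),0\bigr)
+ T_{(g_1,g_2)}m\bigl(0,-\overrightarrow{v}(g_2)\bigr)
\]
is illegitimate, because $T_{(g_1,g_2)}m$ is only defined on $T_{(g_1,g_2)}G_2=\{(v_1,v_2):T\beta(v_1)=T\alpha(v_2)\}$, and neither summand lies there unless $\rho(v)=0$: indeed $T\beta(\overleftarrow{v}(g_1))=\rho(v)\neq 0=T\alpha(0)$ in general. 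You were right to flag this as the obstacle and to point toward the appendix; the paper's fix is precisely to invoke \autoref{thm:tangentMultiplication}, substitute $\overleftarrow{v}(g_1)=T\ell_{B_1}(v)$ and $-\overrightarrow{v}(g_2)=T(r_{B_2}\circ i)(v)$, and use \eqref{eqn:tangentInversion} to reduce the expression to $T(r_{B_2}\circ\ell_{B_1})(v)-T(\ell_{B_1}\circ r_{B_2})(v)=0$. A more elementary route that bypasses bisections altogether: pick a curve $h(t)$ in $\alpha^{-1}(q)$ with $h(0)=\epsilon(q)$ and $\dot h(0)=v$; then $(g_1 h(t),\,h(t)^{-1}g_2)$ lies in $G_2$ with constant product $g$, and differentiating at $t=0$ yields $Tm(\overleftarrow{v}(g_1),-\overrightarrow{v}(g_2))=0$ directly.

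For surjectivity the paper does \emph{not} use your algebraic argument; it takes a curve $c(t)=(c_1(t),c_2(t))$ in $\mathcal{C}^2_{g,N}$ tangent to $(v_{g_1},v_{g_2})$, observes that $\alpha(c_1(t))=\alpha(g)$ is constant so $c_1(t)=g_1 h(t)$ with $h$ an $\alpha$-vertical curve, sets $v=\dot h(0)\in A_qG$, and then $c_2(t)=h(t)^{-1}g_2$ forces $v_{g_2}=-\overrightarrow{v}(g_2)$. Your alternative---deduce $T\alpha(v_{g_1})=0$ from $\alpha\circ m=\alpha\circ\mathrm{pr}_1$, set $v=(T_{\epsilon(q)}\ell_{g_1})^{-1}(v_{g_1})$, then subtract the two relations $Tm(\overleftarrow{v}(g_1),v_{g_2})=0$ and $Tm(\overleftarrow{v}(g_1),-\overrightarrow{v}(g_2))=0$ and use injectivity of $w\mapsto T\ell_{g_1}(w)$ on $\ker T\alpha$---is valid and arguably tidier, but it is circular until step~(a) is repaired, whereas the paper's curve argument for surjectivity is self-contained.
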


\begin{proof}
  The proof proceeds in three stages: first, showing that $ \varphi _{
    \left( N, g _1, g _2 \right) } $ is well-defined; second, that it
  is injective; and finally, that it is surjective.

  \emph{$ \varphi _{ \left( N, g _1, g _2 \right) } $ is well-defined:}
  Given any $ v \in \left( A _q G \right) _{ \left( N, g _1 , g _2
    \right) } $, we wish to show that $ \varphi _{ \left( N, g _1, g _2
    \right) } (v) \in T _{ \left( g _1, g _2 \right) } \mathcal{C} ^2
  _{ g, N } $.  By definition of the space $ \left( A _q G \right) _{
    \left( N, g _1 , g _2 \right) } $, we clearly have $ \left(
    \overleftarrow{ v } \left( g _1 \right) , - \overrightarrow{ v }
    \left( g _2 \right) \right) \in T _{ g _1 } N \times T _{ g _2 } N
  $.  Moreover,
  \begin{equation*}
    T \beta \left( \overleftarrow{ v } \left( g _1 \right) \right) =
    \rho (v) = T \alpha \left( - \overrightarrow{ v } \left( g _2
      \right) \right) ,
  \end{equation*}
  so all that remains is to show that $ T m \left( \overleftarrow{ v }
    \left( g _1 \right) , - \overrightarrow{ v } \left( g _2 \right)
  \right) = 0 $.  Using the results from \autoref{app:bisections}, let
  us suppose that $ B _1 $ and $ B _2 $ are local bisections of $G$
  such that $ g _1 \in B _1 $ and $ g _2 \in B _2 $, so $ \left( B _1
  \right) _\beta (q) = g _1 $ and $ \left( B _2 \right) _\alpha (q) =
  g _2 $.  Then, applying \autoref{thm:tangentMultiplication} and the
  inversion identity \eqref{eqn:tangentInversion}, we obtain
  \begin{align*}
    T m \left( \overleftarrow{ v } \left( g _1 \right) , -
      \overrightarrow{ v } \left( g _2 \right) \right) &= Tm \left( T
      \ell _{ B _1 } (v) , T \left( r _{ B _2 } \circ i \right) (v)
    \right) \\
    &= T \left( r _{ B _2 } \circ \ell _{ B _1 } \right) (v) + T
    \left( \ell _{ B _1 } \circ r _{ B _2 } \circ i \right) (v)
    \\
    &\qquad - T \left( \ell _{ B _1 } \circ r _{ B _2 } \circ \epsilon
      \circ \beta \right) (v) \\
    &= T \left( r _{ B _2 } \circ \ell _{ B _1 } \right) (v) - T
    \left( \ell _{ B _1 } \circ r _{ B _2 } \right) (v)
  \end{align*}
  which vanishes since $ r _{ B _2 } \circ \ell _{ B _1 } = \ell _{ B
    _1 } \circ r _{ B _2 } $.  Therefore, we have shown that $ \varphi _{
    \left( N, g _1, g _2 \right) } (v) \in T _{ \left( g _1, g _2
    \right) } \mathcal{C} ^2 _{ g, N } $, as claimed.

  \emph{$ \varphi _{ \left( N, g _1, g _2 \right) } $ is injective:} This
  follows immediately from the fact that the linear map $ T _{
    \epsilon (q) } \ell _{ g _1 } \colon A _q G \rightarrow T _{ g _1
  } G $, $ v \mapsto \overleftarrow{ v } \left( g _1 \right) $, is
  injective.

  \emph{$ \varphi _{ \left( N, g _1, g _2 \right) } $ is surjective:}
  Suppose that $ \left( v _{ g _1 }, v _{ g _2 } \right) \in T _{
    \left( g _1, g _2 \right) } \mathcal{C} ^2 _{ g, N } $.  Then we
  can take some curve in $ \mathcal{C} ^2 _{ g, N } $,
  \begin{equation*}
    c \colon \left( - \delta, \delta \right) \subset \mathbb{R}
    \rightarrow \mathcal{C} ^2 _{ g, N } , \quad t \mapsto c (t) =
    \left( c _1 (t) , c _2 (t) \right) ,
  \end{equation*}
  such that $ c (0) = \left( c _1 (0) , c _2 (0) \right) = \left( g
    _1, g _2 \right) $ and $ \dot{ c } (0) = \left( v _{ g _1 } , v _{
      g _2 } \right) $.

  Now, since $ \alpha \left( c _1 (t) \right) = \alpha \left( g _1
  \right) $ for all $t$, we can write
  \begin{equation*}
    c _1 (t) = g _1 h (t), 
  \end{equation*}
  where $ h (0) = \epsilon \left( \beta \left( g _1 \right) \right) $
  and $ h (t) = g _1 ^{-1} c _1 (t) \in \alpha ^{-1} \left( \beta
    \left( g _1 \right) \right) $.  Therefore, this defines a Lie
  algebroid element $ v = \dot{ h } (0) \in A _q G $, and
  \begin{equation*}
    v _{ g _1 } = \dot{ c } _1 (0) = T \ell _{ g _1 } \bigl( \dot{ h }
      (0) \bigr) = \overleftarrow{ v } \left( g _1 \right) .
  \end{equation*}
  On the other hand, since $ c _1 (t) c _2 (t) = g _1 g _2 $ for all
  $t$, it follows that $ c _2 (t) = h (t) ^{-1} g _2 $.  Thus,
  \begin{equation*}
    v _{ g _2 } = \dot{c} _2 (0) = T \left( r _{ g _2 } \circ i
    \right) \bigl( \dot{h} (0) \bigr) = - \overrightarrow{ v } \left(
      g _2 \right) .
  \end{equation*}
  Therefore, we have shown that $ v \in \left( A _q G \right) _{
    \left( N, g _1 , g _2 \right) } $ and that $ \left( v _{ g _1 } ,
    v _{ g _2 } \right) = \varphi _{ \left( N, g _1, g _2 \right) } (v)
  $, so $ \varphi _{ \left( N, g _1, g _2 \right) } $ is surjective.
  This completes the proof.
\end{proof}

\subsection{The discrete constrained action principle} Now that we
have characterized the admissible trajectories of length two, and
their variations, we introduce the \emph{discrete constrained action
  sum}
\begin{equation*}
  S \left( G, N, L \right) ^2 \colon \mathcal{C} ^2 _{ g, N }
  \rightarrow \mathbb{R}  , \quad \left( g _1, g _2 \right) \mapsto L
  \left( g _1 \right) + L \left( g _2 \right) .
\end{equation*}
A \emph{critical point} of $ S \left( G, N, L \right) ^2 $ is a
trajectory $ \left( g _1, g _2 \right) \in \mathcal{C} ^2 _{ g, N } $
such that $ \mathrm{d} S \left( G, N, L \right) ^2 \left( g _1, g _2
\right) = 0 $, i.e.,
\begin{equation*}
  0 = \bigl\langle \mathrm{d} S \left( G, N, L \right) ^2 , \left(
    \overleftarrow{ v } \left( g _1 \right) , - \overrightarrow{ v }
    \left( g _2 \right) \right) \bigr\rangle = \left\langle \mathrm{d}
    L ,\overleftarrow{ v } \left( g _1 \right) \right\rangle -
  \left\langle \mathrm{d} L, \overrightarrow{ v } \left( g _2 \right)
  \right\rangle ,
\end{equation*}
for all $ v \in \left( A _q G \right) _{ \left( N, g _1, g _2
  \right) } $, where as before $ q = \beta \left( g _1 \right) =
\alpha \left( g _2 \right) $.

To express these discrete constrained Euler--Lagrange equations in
terms of arbitrary sections $ X \in \Gamma \left( A G \right) $, as
with the earlier results, suppose now that $ \widehat{L} $ is an
extension to a neighborhood of $N \subset G$, so that $ L =
\widehat{L} \rvert _N $.  Therefore, it follows from the above that
\begin{multline}
  \label{eqn:annihilator}
  \mathrm{d} \left( \widehat{L} \circ \ell _{ g _1 } + \widehat{L}
    \circ r _{ g _2 } \circ i \right) \left( \epsilon (q) \right) \\
  = T ^\ast _{ \epsilon (q) } \ell _{ g _1 } \left( \mathrm{d}
    \widehat{L} \left( g _1 \right) \right) + T ^\ast _{ \epsilon (q)
  } \left( r _{ g _2 } \circ i \right) \left( \mathrm{d} \widehat{L}
    \left( g _2 \right) \right) \in \left( A _q G \right) _{ \left( N,
      g _1, g _2 \right) } ^0 ,
\end{multline}
where $ T ^\ast _{ \epsilon (q) } \ell _{ g _1 } $ and $ T ^\ast _{
  \epsilon (q) } \left( r _{ g _2 } \circ i \right) $ denote the
cotangent lift (i.e., the adjoint to the tangent map) of $ \ell _{ g
  _1 } $ and $ r _{ g _2 } \circ i $, respectively, and where $ \left(
  A _q G \right) _{ \left( N, g _1, g _2 \right) } ^0 \subset T ^\ast
_{\epsilon(q)} G $ is the annihilator of $ \left( A _q G \right) _{
  \left( N, g _1, g _2 \right) } $.  Now, by using the definition of $
\left( A _q G \right) _{ \left( N, g _1, g _2 \right) } $, we can
write its annihilator as
\begin{multline*}
    \left( A _q G \right) _{ \left( N, g _1, g _2 \right) } ^0 \\
    \begin{aligned}
      &= \left[ T \ell _{ g _1 } ^{-1} \left( T _{ g _1 } N \cap \ker
          T _{ g _1 } \alpha \right) \cap T \left( r _{ g _2 } \circ i
        \right) ^{-1} \left( T _{ g _2 } N \cap \ker T _{ g _2 } \beta
        \right) \right]
      ^0 \\
      &= \left[ T \ell _{ g _1 } ^{-1} \left( T _{ g _1 } N \cap \ker
          T _{ g _1 } \alpha \right) \right] ^0 + \left[ T \left( r _{
            g _2 } \circ i \right) ^{-1} \left( T _{ g _2 } N \cap
          \ker T _{ g _2 } \beta \right) \right] ^0 \\
      &= T ^\ast _{ \epsilon (q) } \ell _{ g _1 } \left( \left( T _{ g
            _1 } N \cap \ker T _{ g _1 } \alpha \right) ^0 \right) + T
      ^\ast _{ \epsilon (q) } \left( r _{ g _2 } \circ i \right)
      \left( \left( T _{ g _2 } N \cap  \ker T _{ g _2 } \beta \right)
        ^0 \right)      .
    \end{aligned}
\end{multline*}
Therefore, from \eqref{eqn:annihilator}, we obtain that
\begin{equation*}
  T ^\ast _{ \epsilon (q) } \ell _{ g _1 } \left( \mathrm{d}
    \widehat{L} \left( g _1 \right) + \widetilde{ \Lambda } _1 \right)
  = -   T ^\ast _{ \epsilon (q) } \left( r _{ g _2 } \circ i \right)
  \left( \mathrm{d} \widehat{L} \left( g _2 \right) + \widetilde{
      \Lambda } _2 \right),
\end{equation*}
where we have denoted $ \widetilde{ \Lambda } _1 \in \left( T _{ g _1
  } N \cap \ker T _{ g _1 } \alpha \right) ^0 = \left( T _{ g _1 } N
\right) ^0 + \left( \ker T _{ g _1 } \alpha \right) ^0 $ and $
\widetilde{ \Lambda } _2 \in \left( T _{ g _2 } N \cap \ker T _{ g _2
  } \beta \right) ^0 = \left( T _{ g _2 } N \right) ^0 + \left( \ker T
  _{ g _2 } \beta \right) ^0 $.  Consequently, we can write
\begin{equation*}
  \widetilde{ \Lambda } _1 = \Lambda _1 + \mathring{ \Lambda } _1 ,
  \qquad \widetilde{ \Lambda } _2 = \Lambda _2 + \mathring{ \Lambda }
  _2 ,
\end{equation*}
where $ \Lambda _1 \in \left( T _{ g _1 } N \right) ^0 = \nu ^\ast _{
  g _1 } N $, $ \mathring{ \Lambda } _1 \in \left( \ker T _{ g _1 }
  \alpha \right) ^0 $, $ \Lambda _2 \in \left( T _{ g _2 } N \right)
^0 = \nu ^\ast _{ g _2 } N $, and $ \mathring{ \Lambda } _2 \in \left(
  \ker T _{ g _2 } \beta \right) ^0 $.  Finally, since $
\mathring{\Lambda} _1 $ annihilates $\alpha$-vertical tangent vectors,
in particular it annihilates any left-invariant vector field evaluated
at $ g _1 $; likewise, $ \mathring{\Lambda } _2 $ annihilates any
right-invariant vector field evaluated at $ g _2 $. Therefore, it
follows that for any $ X \in \Gamma \left( A G \right) $, we have
\begin{equation*}
  \overleftarrow{ X } [\widehat{L}] \left( g _1 \right)  +
  \bigl\langle \Lambda _1 , \overleftarrow{ X } \left( g _1 \right)
  \bigr\rangle  =   \overrightarrow{ X } [\widehat{L}] \left( g _2
  \right)  + \bigl\langle \Lambda _2 , \overrightarrow{  X } \left( g
    _2 \right) \bigr\rangle ,
\end{equation*}
which is precisely the equation \eqref{eqn:delLambda} that we derived
earlier, in the case $ n = 2 $.

To generalize this argument for any $ n \geq 2 $, one can proceed in a
similar manner.  Given a fixed $ g \in G $, suppose that the space of
composable sequences of length $n$ in $N$,
\begin{equation*}
  N _2 ^n = \left\{ \left( g _1, \ldots, g _n \right) \in N
    \;\middle\vert\; \left( g _k , g _{ k + 1 } \right) \in G _2
    \text{ for } k = 1, \ldots, n - 1 \right\} ,
\end{equation*}
is a submanifold of $ N ^n $, and that the multiplication map
\begin{equation*}
  m ^n \rvert _{ N ^n _2  } \colon N _2 ^n \rightarrow G, \quad \left(
    g _1 , \ldots, g _n \right) \mapsto g _1 \cdots g _n ,
\end{equation*}
has constant rank in an open neighborhood of $ \mathcal{C} ^n _{ g, N
} = \left( m ^n \rvert _{ N ^n _2 } \right) ^{-1} (g) $.  Then, as
before, it follows that $ \mathcal{C} ^n _{ g, N } $ is a submanifold
of $ N _2 ^n $.  The discrete constrained action sum is then
\begin{equation*}
  S \left( G, N, L \right) ^n \colon \mathcal{C} ^n _{ g, N }
  \rightarrow \mathbb{R}  , \quad \left( g _1, \ldots, g _n \right)
  \mapsto \sum _{ k = 1 } ^n L \left( g _k \right) ,
\end{equation*}
and one may prove that when $ \left( g _1, \ldots, g _n \right) \in
\mathcal{C} ^n _{ g, N } $ is a critical point of $ S \left( G, N, L
\right) ^n $, the equations
\begin{equation*}
  \overleftarrow{ X } [\widehat{L}] \left( g _k \right)  +
  \bigl\langle \Lambda _k , \overleftarrow{ X } \left( g _k \right)
  \bigr\rangle  =   \overrightarrow{ X } [\widehat{L}] \left( g _{ k +
      1 } \right)  + \bigl\langle \Lambda _{ k + 1 } ,
  \overrightarrow{  X } \left( g _{ k + 1 } \right) \bigr\rangle ,
\end{equation*}
hold for all sections $ X \in \Gamma \left( A G \right) $ and $ k = 1,
\ldots, n - 1 $, where $ \Lambda _k \in \nu ^\ast _{ g _k } N $ for $
k = 1, \ldots, n $.  Again, this agrees precisely with the earlier
equation \eqref{eqn:delLambda}.

\begin{remark}
  As in \autoref{rmk:subgroupoid}, we note that this discrete
  constrained action principle is different from that used in
  nonholonomic mechanics, where the dynamics are generally not
  symplectic or Poisson. Rather, this more closely resembles the
  vakonomic approach used in optimal control theory, among other
  applications.
\end{remark}

\section{Examples}
\label{sec:examples}

\subsection{Constrained mechanics and optimal control on Lie groups} 
\label{sec:optimalControl}
Let $G$ be a Lie group, whose identity element is denoted by $e \in G
$, and let $\mathfrak{g}$ be the Lie algebra of $G$.  Suppose that a
submanifold $ N \subset G $ is given by the vanishing of the
constraint functions $ \phi ^a $, for $ a \in A $, defined on a
neighborhood of $N$.  Let $ L \colon N \rightarrow \mathbb{R} $ be a
discrete Lagrangian, and let $ \widehat{L} $ be an extension of $L$ to
a neighborhood of $N$, so that $ L = \widehat{L} \rvert _N $.  Then
the discrete dynamics are given by the constraint equations
\begin{equation*}
  \phi ^a \left( g _k \right) = 0 \text{ for all } a \in A, \quad k =
  1, \ldots, n ,
\end{equation*}
together with the equations
\begin{equation}
  \label{eqn:lieGroup}
  \overleftarrow{ \xi } \bigl[ \widehat{L} + \left( \lambda _k \right)
  _a \phi ^a \bigr]  \left( g _k \right)  = \overrightarrow{ \xi }
  \bigl[ \widehat{L} + \left( \lambda _{ k + 1 } \right) _a \phi ^a
  \bigr] \left( g _{ k + 1 } \right) , \quad k = 1, \ldots, n - 1 ,
\end{equation}
for every $ \xi \in \mathfrak{g} $.

Now, in addition to the usual definition $ \overleftarrow{ \xi } (g) =
T \ell _g (\xi) $, we can use the tangent inversion identity
\eqref{eqn:tangentInversion} to see that $ T i (\xi) = - \xi + T
\left( \epsilon \circ \beta \right) (\xi) = - \xi $; therefore, by the
chain rule, we simply have $ \overrightarrow{ \xi } (g) = - T \left( r
  _g \circ i \right) (\xi) = T r _g (\xi) $.  Hence,
\eqref{eqn:lieGroup} can be rewritten as
\begin{equation*}
  \ell _{ g _k } ^\ast \mathrm{d} \left( \widehat{L} + \left( \lambda _k
    \right) _a \phi ^a \right) (e) = r _{ g _{ k + 1 } } ^\ast \mathrm{d}
  \left( \widehat{L} + \left( \lambda _{k+1} \right) _a \phi ^a
  \right) (e) , \quad k = 1 , \ldots, n - 1 .
\end{equation*}
If we define $ \mu _k = r _{ g _k } ^\ast \mathrm{d} \widehat{L} (e)
\in \mathfrak{g} ^\ast $ and $ \Phi ^a _k = r _{ g _k } ^\ast
\mathrm{d} \phi ^a (e) \in \mathfrak{g} ^\ast $ for each $k$, then
this is equivalent to
\begin{equation*}
  \mu _{ k + 1 } + \left( \lambda _{ k + 1 } \right) _a \Phi ^a _{k+1} =
  \operatorname{Ad} ^\ast _{ g _k } \bigl( \mu _k + \left( \lambda _k
    \right) _a \Phi ^a _k \bigr), \quad k = 1, \ldots, n - 1 ,
\end{equation*}
where $ \operatorname{Ad} ^\ast \colon G \times \mathfrak{g}^{\ast}
\rightarrow \mathfrak{g}^{\ast} $ is the coadjoint action of $G$ on
$\mathfrak{g}^{\ast}$.  These equations will be called the
\emph{discrete constrained Lie--Poisson equations} for this system.

We now show how this framework can be used to discretize a general
family of optimal control problems on Lie groups.  Again, let $G$ be a
Lie group with Lie algebra $\mathfrak{g}$, and define some
\emph{control submanifold} $ C \subset \mathfrak{g} $.  A curve $ \xi
\colon [0,1] \rightarrow C $ is said to be a \emph{control curve} for
a trajectory $ g \colon [0,1] \rightarrow G $ if it satisfies $ \dot{
  g } (t) = T \ell _{ g (t) } \left( \xi (t) \right) $ for all $t$;
when $G$ is a matrix Lie group, we can simply write this as $ \dot{g}
(t) = g (t) \xi (t) $.  The continuous optimal control problem is
defined as follows (cf.~\citet{KoMa1997}):
\begin{quote}
  Given an initial configuration $ g _0 \in G $, a final configuration
  $ g _1 \in G $, and a function $ \mathfrak{l} \colon \mathfrak{g}
  \rightarrow \mathbb{R} $, find the control curve $ \xi \colon [0,1]
  \rightarrow C $ for a path $ g \colon [0,1] \rightarrow G
  $, satisfying $ g (0) = g _0 $ and $ g (1) = g _1 $, such that $
  \int _0 ^1 \mathfrak{l} \left( \xi (t) \right) \,\mathrm{d}t $ is
  minimized.
\end{quote}
In other words, we wish to find the most efficient (lowest-cost) way
to ``steer'' the system on a path from $ g _0 $ to $ g _1 $ by
applying controls in $C$.  (In optimal control problems, it is
generally assumed that the system is \emph{controllable}, i.e., that
there exists a control curve for every pair of initial and final
configurations.  As in \citet{KoMa1997}, we omit discussion of
controllability, since we are primarily interested in the necessary
variational conditions for optimality.)

To derive the associated discrete system, it is usually necessary to
introduce an analytic local diffeomorphism $ \tau \colon \mathfrak{g}
\rightarrow G $, which maps a neighborhood of $ 0 \in \mathfrak{g}$ to
a neighborhood of $ e \in G $, where $e$ denotes the identity element
of $G$.  As a consequence, it is possible to deduce that $ \tau (\xi)
\tau \left( - \xi \right) = e $.  This map can then be used to relate
the continuous and discrete settings (cf.~\citet{BoMa2009}).  There
are several choices, in the literature, for the map $\tau$.  For
instance, one may take $\tau$ to be the exponential map, defined by
the time-$1$ flow $ \exp (\xi) = \gamma (1) $, where $ \gamma \colon
[0,1] \rightarrow G $ is the integral curve of the vector field $
\overleftarrow{ \xi } \in \mathfrak{X} (G) $ with initial condition $
\gamma (0) = e $.  Alternatively, for quadratic matrix Lie groups
(e.g., $\operatorname{SO(3)}$, $ \operatorname{SE(2)} $, $
\operatorname{SE(3)} $), it is typical to use the Cayley map $
\operatorname{cay} \colon \mathfrak{g} \rightarrow G $, defined by $
\operatorname{cay} (\xi) = \left( I - \xi / 2 \right) ^{-1} \left( I +
  \xi / 2 \right) $, where $ I = e $ denotes the identity matrix.

Now, suppose that we start with a continuous optimal control problem,
specified by a Lagrangian $ \mathfrak{l} \colon \mathfrak{g}
\rightarrow \mathbb{R} $ and a control submanifold $C$ determined by
the vanishing of the independent constraints $ \Psi ^a \colon
\mathfrak{g} \rightarrow \mathbb{R} $, $ a \in A $.  Then, given a
choice of the map $ \tau \colon \mathfrak{g} \rightarrow G $, we may
construct the discrete Lagrangian $ \widehat{L} \colon G \rightarrow
\mathbb{R} $ and constraint functions $ \phi ^a \colon G \rightarrow
\mathbb{R} $, $ a \in A $, by
\begin{equation*}
  \widehat{L} (g) = h \mathfrak{l} \left( \frac{ \tau ^{-1} (g) }{ h }
  \right) , \qquad \phi ^a (g) = h \Psi ^a \left( \frac{ \tau ^{-1} (g) }{
      h } \right) ,
\end{equation*}
where $h$ is the time step size of the discretization.

Therefore, we can finally rewrite Equation~\eqref{eqn:lieGroup} as
\begin{multline*}
  \left( \tau ^{-1} \circ \ell _{ g _k } \right) ^\ast \left(
    \mathrm{d} \mathfrak{l} \left( \xi _k \right) + \left( \lambda _k
    \right) _a \mathrm{d} \Psi ^a \left( \xi _k \right) \right) \\
  = \left( \tau ^{-1} \circ r _{ g _{ k + 1 } }\right) ^\ast \left(
    \mathrm{d} \mathfrak{l} \left( \xi _{ k + 1 } \right) + \left(
      \lambda _{ k + 1 } \right) _a \mathrm{d} \Psi ^a \left( \xi _{ k
        + 1 } \right) \right), \quad k = 1 , \ldots, n - 1 ,
\end{multline*} 
where $ \xi _k = \tau ^{-1} \left( g _k \right) / h $.  In other
words, denoting $ \mathrm{d} \tau _\xi \colon \mathfrak{g} \rightarrow
\mathfrak{g} $ to be the left-trivialized tangent map of $\tau$,
defined by $ T _{ \xi } \tau = T _e \ell _{ \tau (\xi) } \circ
\mathrm{d} \tau _\xi $, the previous equation can be rewritten as
\begin{multline*}
  \bigl( \mathrm{d} \tau _{ h \xi _k } ^{-1} \bigr) ^\ast \left(
    \mathrm{d} \mathfrak{l} \left( \xi _k \right) + \left( \lambda _k
    \right) _a \mathrm{d} \Psi ^a \left( \xi _k \right) \right) \\
  = \bigl( \mathrm{d} \tau _{ - h \xi _{ k + 1 } } ^{-1} \bigr) ^\ast
  \left( \mathrm{d} \mathfrak{l} \left( \xi _{ k + 1 } \right) +
    \left( \lambda _{ k + 1 } \right) _a \mathrm{d} \Psi ^a \left( \xi
      _{ k + 1 } \right) \right) , \quad k = 1, \ldots, n - 1 ,
\end{multline*}
since we have $ \mathrm{d} \tau _\xi ^{-1} = T _{ \tau (\xi) } \tau
^{-1} \circ T _e \ell _{ \tau (\xi) } $.

\subsection{A discrete plate-ball system}

Consider, now, the optimal control problem for the classical example
of a homogeneous ball, rolling (without slipping) on a rotating plate.
The configuration space of this system is $ Q = \mathbb{R}^2 \times
\operatorname{SO(3)} $, parameterized by $ q = \left( x, y, g \right)
$.  Let $ \left( \omega _x, \omega _y , \omega _z \right) \in
\mathbb{R}^3 \cong \mathfrak{so}(3) $ be the angular velocity vector
of the ball with respect to an inertial frame, and let $r$ be its
radius.  Suppose that the plane rotates with constant angular velocity
$\Omega$, about the axis perpendicular to the plane through the
origin.  Then, we may formulate the following optimal control problem,
which is called the \emph{plate-ball problem}:
\begin{quote}
  Given initial and final configurations $ q _0, q _1 \in \mathbb{R}^2
  \times \operatorname{SO(3)} $, find the optimal control curves $
  \left( x (t) , y (t) \right) \in \mathbb{R}^2 $, $ t \in [0,1] $,
  that steer the system from $ q _0 $ to $ q _1 $, such that the cost
  function
  \begin{equation*}
    \int _0 ^1 \frac{1}{2} \left[ \left( \dot{x} (t) \right) ^2 +
      \left( \dot{y} (t) \right) ^2 \right] \,\mathrm{d}t 
  \end{equation*}
  is minimized, subject to the (non-integrable)
  rolling-without-slipping constraints
  \begin{equation*}
    \dot{y} + r \omega _x = \Omega x, \qquad \dot{x} - r \omega _y = -
    \Omega y , \qquad \omega _z = c , 
  \end{equation*}
  where $c$ is a constant.
\end{quote}
The continuous equations of motion for this problem are carefully
studied in \citet{KoMa1997,IgMaMaSo2008}; we now study the geometric
discretization of this system and its dynamics.

The Lie groupoid that arises for the plate-ball problem is $
\mathbb{R} ^2 \times \mathbb{R}^2 \times \operatorname{SO(3)}
\rightrightarrows \mathbb{R}^2 $, where the structure functions are
given by
\begin{align*}
  \alpha \left( x _0 , y _0, x _1, y _1 , g _1 \right) &= \left( x _0,
    y _0 \right) ,\\
  \beta \left( x _0, y _0, x _1, y _1, g _1
  \right) &= \left( x _1 , y _1 \right) ,\\
  m \left( \left( x _0 , y _0, x _1, y _1 , g _1 \right) , \left( x
      _1, y _1 , x _2, y _2, g _2 \right) \right) &= \left( x _0, y
    _0,
    x _2, y _2, g _1 g _2 \right) ,\\
  i \left( x _0, y _0, x _1, y _1, g _1 \right) &= \left( x _1, y _1 ,
    x _0, y _0, g _1 ^{-1} \right) ,\\
  \epsilon \left( x, y \right) &= \left( x , y, x, y, I \right) .
\end{align*}
Now, denote by $ \left\{ E _1, E _2, E _3 \right\} $ the standard
basis of the Lie algebra $ \mathfrak{so}(3) $,
\begin{equation*}
  E _1 =
  \begin{pmatrix*}[r]
    0&0&0\\
    0&0&-1\\
    0&1&0
  \end{pmatrix*},
  \qquad 
  E _2 =
  \begin{pmatrix*}[r]
    0&0&1\\
    0&0&0\\
    -1&0&0
  \end{pmatrix*},
  \qquad
  E _3 = 
  \begin{pmatrix*}[r]
    0&-1&0\\
    1&0&0\\
    0&0&0
  \end{pmatrix*} .
\end{equation*}
This defines the Lie algebra isomorphism $ \widehat{\phantom{n}}
\colon \mathbb{R}^3 \rightarrow \mathfrak{so}(3) $ (sometimes called
the ``hat map'' \citep{AbMaRa1988}), which takes
\begin{equation*}
  \omega = \left( \omega _x, \omega _y , \omega _z \right) \mapsto
  \widehat{\omega} = \omega _x E _1 + \omega _y E _2 + \omega _z E _3
  =
  \begin{pmatrix}
    0&-\omega _z & \omega _y \\
    \omega _z & 0 & -\omega _x \\
    - \omega _y & \omega _x & 0 
  \end{pmatrix} .
\end{equation*}
As a discretization procedure for the Lie group $ \operatorname{SO(3)}
$, it is typical to use the matrix logarithm, denoted $ \log \colon
\operatorname{SO(3)} \rightarrow \mathfrak{so}(3) $, which is the
(local) inverse of the matrix exponential map $ \exp \colon
\mathfrak{so}(3) \rightarrow \operatorname{SO(3)} $.  For simplicity,
we may use the approximation $ \exp \left( h \xi \right) \approx I + h
\xi $ to obtain
\begin{equation*}
  \xi = \frac{ \log g }{ h } \approx \frac{ g - I }{ h } .
\end{equation*}
Taking $ \widehat{\omega} = \xi $, we therefore deduce that
\begin{alignat*}{2}
  \omega _x &= - \frac{1}{2} \operatorname{tr} \left( \xi E _1 \right)
  &&\approx - \frac{ 1 }{ 2 h } \operatorname{tr} \left( g E _1
  \right)
  , \\
  \omega _y &= - \frac{1}{2} \operatorname{tr} \left( \xi E _2 \right)
  &&\approx - \frac{ 1 }{ 2 h } \operatorname{tr} \left( g E _2 \right),\\
  \omega _z &= - \frac{1}{2} \operatorname{tr} \left( \xi E _3 \right)
  &&\approx - \frac{ 1 }{ 2 h } \operatorname{tr} \left( g E _3
  \right) ,
\end{alignat*}
since $ \operatorname{tr} \left( I E _i \right) = \operatorname{tr}
\left( E _i \right) = 0 $ for $ i = 1, 2, 3 $.

Finally, we derive the following discretization of the plate-ball
system.  Let $N$ be the submanifold of $ \mathbb{R}^2  \times
\mathbb{R}^2  \times \operatorname{SO(3)} $ determined by the
vanishing of the constraint functions,
\begin{equation}
  \label{eqn:discretePlateBallConstraints}
  \begin{aligned}
    \phi ^1 \left( x _0, y _0, x _1 , y _1 , g _1 \right) &= h \left[
      \frac{ y _1 - y _0 }{ h } - \frac{ r }{ 2 h } \operatorname{tr}
      \left( g _1 E
        _1 \right) - \Omega \frac{ x _1 + x _0 }{ 2 } \right] ,\\
    \phi ^2 \left( x _0, y _0, x _1 , y _1 , g _1 \right) &= h \left[
      \frac{ x _1 - x _0 }{ h } + \frac{ r }{ 2 h } \operatorname{tr}
      \left( g _1 E
        _2 \right) + \Omega \frac{ y _1 + y _0 }{ 2 } \right] ,\\
    \phi ^3 \left( x _0, y _0, x _1 , y _1 , g _1 \right) &= h \left[
      c + \frac{ 1 }{ 2 h } \operatorname{tr} \left( g _1 E _3 \right)
    \right] ,
  \end{aligned}
\end{equation}
and take the discrete Lagrangian to be
\begin{equation*}
  \widehat{L} \left( x _0, y _0, x _1 , y _1 , g _1 \right) =
  \frac{h}{2} \left[ \left( \frac{ x _1 - x _0 }{ h } \right) ^2 +
    \left( \frac{ y _1 - y _0 }{ h } \right) ^2 \right] .
\end{equation*}
Therefore, applying the results of \autoref{sec:lagrangeMultipliers},
the discrete constrained Lagrangian dynamics are given by
\begin{multline*}
  \overleftarrow{ X } \bigl[ \widehat{L} + \left( \lambda _k \right)
  _a \phi ^a \bigr] \left( x _{k-1} , y _{k-1} , x _k , y _k , g _k
  \right) \\
  = \overrightarrow{ X } \bigl[ \widehat{L} + \left( \lambda _{ k + 1
    } \right) _a \phi ^a \bigr] \left( x _k , y _k , x _{ k + 1 } , y
    _{ k + 1 }, g _{ k + 1 } \right) , \quad k = 1, \ldots, n - 1 ,
\end{multline*}
together with the vanishing of the constraint functions
\eqref{eqn:discretePlateBallConstraints}, where $X$ is an arbitrary
section of the vector bundle $ T \mathbb{R}^2 \times \mathfrak{so}(3)
\rightarrow \mathbb{R}^2 $.  A basis of sections of this vector bundle
is given by
\begin{equation*}
  \displaystyle
  \left\{ \bigl( \tfrac{ \partial }{ \partial  x } , 0 \bigr) , \bigl(
      \tfrac{ \partial }{ \partial y } , 0 \bigr) , \left( 0, E _1
    \right) , \left( 0, E _2 \right) , \left( 0 , E _3 \right)
  \right\} .
\end{equation*}
Hence, the discrete dynamics are given by the following system of
equations:
\begin{align*}
  0 &= \frac{ x _{ k + 1 } - 2 x _k + x _{ k - 1 } }{ h ^2 } + \frac{
    \left( \lambda _{ k + 1 } \right) _2 - \left( \lambda _k \right)
    _2 }{ h } + \Omega \frac{ \left( \lambda _{k+1} \right) _1 +
    \left( \lambda _k \right) _1 }{ 2 } ,\\
  0 &= \frac{ y _{ k + 1 } - 2 y _k + y _{ k - 1 } }{ h ^2 } + \frac{
    \left( \lambda _{ k + 1 } \right) _2 - \left( \lambda _k \right)
    _2 }{ h } - \Omega \frac{ \left( \lambda _{k+1} \right) _1 +
    \left( \lambda _k \right) _1 }{ 2 } ,\\
  0 &= - r \left( \lambda _k \right) _1 \operatorname{tr} \left( g _k
    E _1 ^2 \right) + r \left( \lambda _{ k + 1 } \right) _1
  \operatorname{tr} \left( E _1 g _{ k + 1 } E _1 \right) \\
  &\qquad + r \left( \lambda _k \right) _2 \operatorname{tr} \left( g
    _k E _1 E _2 \right) - r \left( \lambda _{ k + 1 } \right) _2
  \operatorname{tr} \left( E _1 g _{ k + 1 } E _2 \right) \\
  &\qquad - \left( \lambda _k \right) _3 \operatorname{tr} \left( g _k
    E _1 E _3 \right) + \left( \lambda _{ k + 1 } \right) _3
  \operatorname{tr} \left( E _1 g _{ k + 1 } E _3 \right) ,\\
  0 &= - r \left( \lambda _k \right) _1 \operatorname{tr} \left( g _k
    E _2 E _1 \right) + r \left( \lambda _{k+1} \right) _1
  \operatorname{tr} \left( E _2 g _{ k + 1 } E _1 \right) \\
  &\qquad + r \left( \lambda _k \right) _2 \operatorname{tr} \left( g
    _k E _2 ^2 \right) - r \left( \lambda _{ k + 1 } \right) _2
  \operatorname{tr} \left( E _2 g _{ k + 1 } E _2 \right) \\
  &\qquad - \left( \lambda _k \right) _3 \operatorname{tr} \left( g _k
    E _2 E _3 \right) + \left( \lambda _{ k + 1 } \right) _3
  \operatorname{tr} \left( E _2 g _{ k + 1 } E _3 \right) ,\\
  0 &= - r \left( \lambda _k \right) _1 \operatorname{tr} \left( g _k
    E _3 E _1 \right) + r \left( \lambda _{ k + 1 } \right) _1
  \operatorname{tr} \left( E _3 g _{ k + 1 } E _1 \right) \\
  &\qquad + r \left( \lambda _k \right) _2 \operatorname{tr} \left( g
    _k E _3 E _2 \right) - r \left( \lambda _{ k + 1 } \right) _2
  \operatorname{tr} \left( E _3 g _{ k + 1 } E _2 \right) \\
  &\qquad - \left( \lambda _k \right) _3 \operatorname{tr} \left( g _k
    E _3 ^2 \right) + \left( \lambda _{ k + 1 } \right) _3
  \operatorname{tr} \left( E _3 g _{ k + 1 } E _3 \right) ,\\
  0 &= \frac{ y _{k+1} - y _k }{ h } - \frac{ r }{ 2 h }
  \operatorname{tr} \left( g _{k+1} E _1 \right) - \Omega \frac{ x
    _{k+1} + x_k }{ 2 } ,\\
  0 &= \frac{ x _{k+1} - x _k }{ h } + \frac{ r }{ 2 h }
  \operatorname{tr} \left( g _{k+1} E _2 \right) + \Omega \frac{ y
    _{k+1} + y _k }{ 2 } ,\\
  0 &= c + \frac{ 1 }{ 2 h } \operatorname{tr} \left( g _{k+1} E _3
  \right) .
\end{align*}
These are eight equations (corresponding to the five basis elements
and three constraints), and each step of the dynamics requires solving
for eight unknowns (the five degrees of freedom $x$, $y$, $ \omega _x
$, $ \omega _y $, $ \omega _z $, plus the three Lagrange multipliers).

\subsection{Time-dependent constrained mechanics}

To treat time-dependent discrete mechanics on a Lie groupoid $ G
\rightrightarrows Q $, we construct the new Lie groupoid $ G _{
  \mathbb{R} } = \mathbb{R} \times \mathbb{R} \times G
\rightrightarrows \mathbb{R} \times Q $, with structure functions
defined by
\begin{align*}
  \alpha _{ \mathbb{R} } \left( t _0, t _1 , g _1 \right) &= \left( t
    _0, \alpha \left( g _1 \right) \right) ,\\
  \beta _{ \mathbb{R}  } \left( t _0, t _1 , g _1 \right) &= \left( t
    _1, \beta \left( g _1 \right) \right) ,\\
  m _{ \mathbb{R}  } \left( \left( t _0, t _1, g _1 \right) , \left( t
      _1, t _2 , g _2 \right) \right) &= \left( t _0, t _2, g _1 g _2
  \right) ,\\
  i _{ \mathbb{R}  } \left( t _0, t _1, g _1 \right) &= \left( t _1, t
    _0, g _1 ^{-1} \right) ,\\
  \epsilon _{ \mathbb{R}  } \left( t, q \right) &= \left( t, t,
    \epsilon (q) \right) .
\end{align*}
The associated Lie algebroid is naturally identified with $ T
\mathbb{R} \times A G \rightarrow \mathbb{R} \times Q $.  Hence, the
sections of this Lie algebroid are spanned by $ \left( \frac{ \partial
  }{ \partial t } , 0 \right) $ and elements having the form $ \left(
  0, X \right) $, where $ X \in \Gamma \left( A G \right) $.
Therefore, we can express the left- and right-invariant vector fields
as follows:
\begin{alignat*}{2}
  \overleftarrow{ \left( \tfrac{ \partial }{ \partial t } , 0 \right)
  } \left( t _0, t _1 , g _1 \right) &= \left( - \tfrac{ \partial
    }{ \partial t } \bigr\rvert _{ t = t _0 } , 0 _{ t _1 } , 0 _{ g
      _1 } \right) &&\eqqcolon - \tfrac{ \partial }{ \partial t }
  \bigr\rvert _{ \left( t _0, t _1, g _1 \right)  } ,\\
  \overrightarrow{ \left( \tfrac{ \partial }{ \partial t } , 0 \right)
  } \left( t _0, t _1 , g _1 \right) &= \left( 0 _{ t _0 } ,
    \tfrac{ \partial }{ \partial t } \bigr\rvert _{ t = t _1 } , 0 _{
      g _1 } \right) &&\eqqcolon \tfrac{ \partial }{ \partial t
    ^\prime } \bigr\rvert _{ \left( t _0, t _1, g _1 \right) } ,\\
  \overleftarrow{ \left( 0, X \right) } \left( t _0, t _1 , g _1
  \right) &= \left( 0 _{ t _0 } , 0 _{ t _1 } , \overleftarrow{ X }
    \left( g _1 \right) \right) &&\eqqcolon \overleftarrow{ X } \left(
    t _0, t _1, g _1 \right) ,\\
  \overrightarrow{ \left( 0, X \right) } \left( t _0, t _1 , g _1
  \right) &= \left( 0 _{ t _0 } , 0 _{ t _1 } , \overrightarrow{ X }
    \left( g _1 \right) \right) &&\eqqcolon \overrightarrow{ X } \left(
    t _0, t _1, g _1 \right) .
\end{alignat*}

Now, suppose we are given a constraint submanifold $ N _{ \mathbb{R} }
\subset G _{ \mathbb{R} } $ and a discrete Lagrangian $ L _{
  \mathbb{R} } \colon N _{ \mathbb{R} } \rightarrow \mathbb{R} $.
Then, following the theory presented in \autoref{sec:constraints}, the
discrete dynamics correspond to the Lagrangian submanifold $ \Sigma _{
  L _{ \mathbb{R} } } $ of the cotangent groupoid $ T ^\ast G _{
  \mathbb{R} } \rightrightarrows A ^\ast G _{ \mathbb{R} } $.  More
explicitly, take an arbitrary extension $ \widehat{L} _{ \mathbb{R} }
$, and suppose that $ N _{ \mathbb{R} } $ is given by the vanishing of
constraint functions $ \phi ^a _{ \mathbb{R} } $, $ a \in A $.  Then
the discrete dynamical equations are
\begin{equation*}
  \phi _{ \mathbb{R}  } ^a \left( t _k, t _{ k + 1 } , g _{ k + 1 }
  \right) = 0 , \quad a \in A ,
\end{equation*}
and
\begin{align*}
  \tfrac{\partial }{ \partial t ^\prime } \bigl[ \widehat{L} _{
    \mathbb{R} } + \left( \lambda _k \right) _a \phi ^a _{ \mathbb{R}
  } \bigr] \left( t _{ k - 1 } , t _k , g _k \right) + \tfrac{\partial
  }{ \partial t } \bigl[ \widehat{L} _{ \mathbb{R} } + \left( \lambda
    _{k+1} \right) _a \phi ^a _{ \mathbb{R} } \bigr] \left( t
    _k , t _{ k + 1 }, g _{k+1} \right) &= 0 , \\
  \overleftarrow{ X } \bigl[ \widehat{L} _{ \mathbb{R} } + \left(
    \lambda _k \right) _a \phi ^a _{ \mathbb{R} } \bigr] \left( t _{ k
      - 1 } , t _k , g _k \right) - \overrightarrow{ X } \bigl[
  \widehat{L} _{ \mathbb{R} } + \left( \lambda _{k+1} \right) _a \phi
  ^a _{ \mathbb{R} } \bigr] \left( t
    _k , t _{ k + 1 }, g _{k+1} \right) &= 0 ,
\end{align*}
for $ k = 1, \ldots, n - 1 $.

\begin{example}
  Let us construct an integrator for the constrained optimal control
  problem on a Lie group, as in \autoref{sec:optimalControl}, but
  incorporating adaptive time-stepping, rather than a fixed time step
  $h$.  Define the time-dependent discrete Lagrangian
  \begin{equation*}
    \widehat{L} _{ \mathbb{R}  } \left( t _{ k - 1 } , t _k , g _k
    \right) = \left( t _k - t _{ k - 1 } \right) \mathfrak{l} \left(
      \frac{ \tau ^{-1} \left( g _k \right) }{ t _k - t _{ k - 1 } } 
    \right) ,
  \end{equation*}
  along with the constraint functions
  \begin{equation*}
    \phi ^a _{ \mathbb{R}  } \left( t _{ k - 1 }, t _k , g _k \right)
    = \left( t _k - t _{ k - 1 } \right) \Psi ^a \left( \frac{ \tau
        ^{-1} \left( g _k \right) }{ t _k - t _{ k - 1 } } \right) ,
    \quad a \in A .
  \end{equation*}
  Taking $ \xi _k = \tau ^{-1} \left( g _k \right) /h _k $, where $ h
  _k = t _k - t _{ k - 1 } $, the discrete dynamics are thus given by
  the equations
  \begin{align*}
    0 &= \Psi ^a \left( \xi _{k+1} \right) , \\
    0 &= \bigl( \mathrm{d} \tau _{ h _k \xi _k } ^{-1} \bigr) ^\ast
    \left( \mathrm{d} \mathfrak{l} \left( \xi _k \right) + \left(
        \lambda _k \right) _a \mathrm{d} \Psi ^a \left( \xi _k \right)
    \right) \\
    &\qquad - \bigl( \mathrm{d} \tau _{ - h _{k+1} \xi _{ k + 1 } } ^{-1}
    \bigr) ^\ast \left( \mathrm{d} \mathfrak{l} \left( \xi _{ k + 1 }
      \right) + \left( \lambda _{ k + 1 } \right) _a \mathrm{d} \Psi
      ^a \left( \xi _{ k + 1 } \right) \right) , \\
    0 &= \mathfrak{l} \left( \xi _k \right) - \left\langle \mathrm{d}
      \mathfrak{l} \left( \xi _k \right) , \xi _k \right\rangle -
    \mathfrak{l} \left( \xi _{ k + 1 } \right) + \left\langle
      \mathrm{d} \mathfrak{l} \left( \xi _{ k + 1 } \right) , \xi _{ k
        + 1 } \right\rangle .
  \end{align*}
  The last condition, corresponding to the $ \frac{ \partial
  }{ \partial t } $ terms, can be interpreted as \emph{conservation of
    energy} along the discrete evolution, as with the
  symplectic-energy-momentum preserving methods of \citet{KaMaOr1999}.
\end{example}

\begin{example}
  The previous example allowed for variable time steps, with
  constraints placed only on the Lie groupoid $G$.  On the other hand,
  we may consider time-dependent discrete Lagrangian mechanics which
  are \emph{unconstrained} on $G$, but with \emph{fixed} time step
  size $ t _{ k } - t _{ k - 1 } = h $ for $ k = 1, \ldots, n $.  As
  before, consider the groupoid $ G _{ \mathbb{R} } \rightrightarrows
  \mathbb{R} \times Q $, and define the constraint submanifold
  \begin{equation*}
    N _{ \mathbb{R}  } = \left\{ \left( t _0, t _1, g _1 \right) \in G
      _{ \mathbb{R}  } \;\middle\vert\; t _1 = t _0 + h \right\} ,
  \end{equation*}
  for some constant $h$.  This corresponds to the vanishing of the
  single constraint function
  \begin{equation*}
    \phi _{ \mathbb{R}  } \left( t _0, t _1, g _1 \right) = t _1 - t _0 - h .
  \end{equation*}
  Then, given an extended discrete Lagrangian $ \widehat{L} _{
    \mathbb{R} } \colon G _{ \mathbb{R} } \rightarrow \mathbb{R} $,
  the equations of motion are (together with the constraint equation)
  \begin{align*}
    \tfrac{\partial }{ \partial t ^\prime } \bigl[ \widehat{L} _{
      \mathbb{R} } + \lambda _k \phi _{ \mathbb{R} } \bigr] \left( t
      _{ k - 1 } , t _k , g _k \right) + \tfrac{\partial }{ \partial t
    } \bigl[ \widehat{L} _{ \mathbb{R} } + \lambda _{k+1} \phi _{
      \mathbb{R} } \bigr] \left( t
      _k , t _{ k + 1 }, g _{k+1} \right) &= 0 , \\
    \overleftarrow{ X } \bigl[ \widehat{L} _{ \mathbb{R} } +
    \lambda _k \phi _{ \mathbb{R} } \bigr] \left( t _{ k - 1 } , t _k
      , g _k \right) - \overrightarrow{ X } \bigl[ \widehat{L} _{
      \mathbb{R} } + \lambda _{k+1} \phi _{ \mathbb{R} } \bigr] \left(
      t _k , t _{ k + 1 }, g _{k+1} \right) &= 0 ,
  \end{align*}
  for $ k = 1, \ldots, n - 1 $.  However, this can be simplified
  greatly, since we observe that $ \frac{ \partial }{ \partial t
    ^\prime } \left[ \phi _{ \mathbb{R} } \right] = 1 $, $
  \frac{ \partial }{ \partial t } \left[ \phi _{ \mathbb{R} } \right]
  = - 1 $, and $ \overleftarrow{ X } \left[ \phi _{ \mathbb{R}
    }\right] = \overrightarrow{ X } \left[ \phi _{ \mathbb{R} }
  \right] = 0 $.  Therefore, the last two equations become
  \begin{align*}
    \tfrac{\partial }{ \partial t ^\prime } \bigl[ \widehat{L} _{
      \mathbb{R} } \bigr] \left( t _{ k - 1 } , t _k , g _k \right) +
    \lambda _k + \tfrac{\partial }{ \partial t } \bigl[ \widehat{L} _{
      \mathbb{R} } \bigr] \left( t _k , t _{ k + 1 }, g _{k+1}
    \right) - \lambda _{ k + 1 }  &= 0 , \\
 \overleftarrow{ X } \bigl[ \widehat{L} _{ \mathbb{R} } \bigr]
    \left( t _{ k - 1 } , t _k , g _k \right) - \overrightarrow{ X }
    \bigl[ \widehat{L} _{ \mathbb{R} } \bigr] \left( t _k , t _{ k + 1
      }, g _{k+1} \right) &= 0 .
  \end{align*}
  Finally, observe that these two equations are completely decoupled,
  so we can in fact eliminate the first equation.  Therefore, we
  obtain 
  \begin{equation*}
    \overleftarrow{ X } \bigl[ \widehat{L} _{ \mathbb{R} } \bigr]
    \left( t _{ k - 1 } , t _k , g _k \right) - \overrightarrow{ X }
    \bigl[ \widehat{L} _{ \mathbb{R} } \bigr] \left( t _k , t _{ k + 1
      }, g _{k+1} \right) = 0 ,
  \end{equation*}
  with $ t _{ k + 1 } = t _k + h $, for $ k = 1, \ldots, n - 1 $.

  This has precisely the same form as the discrete Euler--Lagrange
  equations in the time-independent case; as in the continuous theory
  of Lagrangian mechanics, no ``$ \frac{ \partial L }{ \partial t }
  $''-type terms arise from introducing the time dependency.  In the
  special case where $ \widehat{ L } _{ \mathbb{R} } $ depends only on
  the time step size $ t _{ k + 1 } - t _k $, this is equivalent to
  defining the time-independent discrete Lagrangian $ L \colon G
  \rightarrow \mathbb{R} $, and one recovers precisely the usual,
  unconstrained discrete Euler--Lagrange equations.

  Finally, we remark that an extension of this setup can be used for
  more sophisticated step size control, by taking the constraint
  function to be (for example),
  \begin{equation*}
    \phi _{ \mathbb{R}  } \left( t _0, t _1 , g _1 \right) = t _1 - t
    _0 - h \left( g  _1 \right) ,
  \end{equation*}
  where $ h \colon G \rightarrow \mathbb{R} $ is some step size
  function.  In this case, $ \overleftarrow{ X } \left[ \phi _{
      \mathbb{R} }\right] = - \overleftarrow{ X } \left[ h \right] $
  and $ \overrightarrow{ X } \left[ \phi _{ \mathbb{R} }\right] = -
  \overrightarrow{ X } \left[ h \right] $, neither of which is
  generally zero.  This differs considerably from the constant $h$
  case treated above: in general, the equations of motion no longer
  decouple, and moreover the discrete Euler--Lagrange equations
  contain additional terms arising from the time step control
  function.  This gives some insight into the delicate nature of
  implementing time step control for structure-preserving numerical
  integrators (as discussed at length in, e.g., \citet{HaLuWa2006}).
\end{example}

\section{Conclusion}
\label{sec:conclusion}

\subsection{Summary of results}

We began this paper by developing a generalized theory of discrete
Lagrangian mechanics, in terms of Lagrangian submanifolds of
symplectic groupoids, which induce Poisson relations on their base
manifolds.  We also characterized the regularity and reversibility of
these systems, providing a significant generalization of previous
results.

Applying this framework to the cotangent groupoid $ T ^\ast G
\rightrightarrows A ^\ast G $ of a groupoid $ G \rightrightarrows Q $,
we were able to formulate a new theory of discrete constrained
Lagrangian mechanics, where the discrete Lagrangian $L$ is defined on
a constraint submanifold $ N \subset G $; this reduces to the earlier
unconstrained theory in the special case $ N = G $.  This allows both
for integrable constraints, when $N$ is a subgroupoid of $G$, as well
as more general non-integrable constraints, and the distinction
between the two is consistent with the continuous theory on Lie
algebroids.  The Lagrangian submanifold $ \Sigma _L \subset T ^\ast G
$ generated by $L$ was shown to have the structure of an affine bundle
over $N$, associated to the conormal bundle of $N$.  When $N$ is
defined implicitly, by the vanishing of a family of functions $ \phi
^a $ in a neighborhood on $G$, we showed that one can obtain natural
coordinates for the fibers of $ \Sigma _L $, which correspond to
Lagrange multipliers for the constraints.  The resulting dynamics on $
\Sigma _L $, therefore, specify the evolution of configurations (on
the base) and Lagrange multipliers (on the fibers).

In this setting, we also introduced the notion of a morphism between
discrete constrained Lagrangian systems, which corresponds to a
morphism of Lie groupoids that preserves the additional structure of
the discrete Lagrangians and constraint manifolds.  With this
definition, it was then proven that morphisms of discrete constrained
Lagrangian systems allow for reduction of the dynamics.  In addition,
we studied Noether symmetries, and proved a discrete version of
Noether's theorem, for these constrained systems.

After this, we proved that, under some regularity conditions, the
dynamical equations can also be derived from a variational principle,
where the discrete action sum is constrained to trajectories lying in
$N$.  This establishes a connection between these systems and
\emph{variational integrators}.

Finally, we applied the previous results to discretize several
examples of continuous systems with constraints, including constrained
mechanics and optimal control on Lie groups, the non-integrable
plate-ball system, and time-dependent systems with either fixed or
adaptive time steps.

\subsection{Future directions} There are many interesting directions,
regarding discrete constrained Lagrangian mechanics, that remain to be
explored.  For example, it is well known that if we have a regular,
continuous Lagrangian function $ L \colon T Q \rightarrow \mathbb{R}
$, and a sufficiently small time step $ h > 0 $, then one may define a
regular discrete Lagrangian function $ L _h ^E \colon Q \times Q
\rightarrow \mathbb{R} $, called the \emph{exact discrete Lagrangian},
where $ L _h ^E \left( q _0, q _1 \right) $ is precisely the action
integral along the Euler--Lagrange path from $ q _0 $ to $ q _1 $,
over a time interval of size $h$.  Moreover, the exact Hamiltonian
flow, for time $h$, is just the pushforward (via the discrete Legendre
transformations) of the discrete flow associated with $ L _h ^E $
(cf.~\citet{MaWe2001}).  In other words, the Lagrangian submanifold $
\mathrm{d} L _h ^E \subset T ^\ast \left( Q \times Q \right) $ is
equal to the graph of the Hamiltonian flow for time $h$.  This result
is important for the error analysis of variational integrators on $ Q
\times Q $, and this analysis can also be extended to an exact
discrete Lagrangian $ L _h ^E \colon G \rightarrow \mathbb{R} $ when
the continuous Lagrangian $ L \colon A G \rightarrow \mathbb{R} $ is
defined on the Lie algebroid of $G$ (\citet{MaMaMa2012}).  It would be
interesting to extend this construction for a \emph{constrained}
Lagrangian system, with constraint distribution $ \Delta \subset A G
$, and to see if one could express such results in terms of a
Lagrangian bisection of $ T ^\ast G $, generated by an exact discrete
Lagrangian on some $ N \subset G $.

Finally, for continuous systems, \citet{YoMa2006b} showed that a
variational principle, which they call the \emph{Hamilton--Pontryagin
  principle}, is closely related to Dirac structures, and thus can be
quite useful for studying systems with nonholonomic constraints and
various degeneracies.  In \citet{Stern2010}, it was shown that a
\emph{discrete Hamilton--Pontryagin principle} describes the
relationship between the generating-function and variational
interpretations of the unconstrained discrete Lagrangian $ L \colon G
\rightarrow \mathbb{R} $.  It would be interesting to see if this
variational principle could be generalized to discrete constrained
Lagrangians $ L \colon N \subset G \rightarrow \mathbb{R} $, and how
this might be connected to the discretization of Dirac structures and
of the Courant algebroid.

\begin{acknowledgment}
  We are grateful to Eduardo Mart\'inez for providing helpful feedback
  at various stages of this research.  This work has been partially
  supported by MICINN (Spain) grants MTM2009-13383,
  MTM2010-21186-C02-01, and MTM2009-08166-E; project ACIISI
  SOLSUBC200801000238; project ``IngenioMathematica'' (i-MATH)
  No.~{CSD} 2006-00032 (Consolider-Ingenio 2010); and IRSES project
  ``Geomech-246981.''  Thanks also to the ``Research in Teams''
  program of the Trimester in Combinatorics and Control (CoCo) 2010,
  funded by NSF Award DMS 09-60589, for providing financial support
  and stimulating discussions.  In addition, A.~S. was supported by
  NSF DMS/CM Award 0715146 and PHY/PFC Award 0822283 during a
  postdoctoral fellowship at the University of California, San Diego.
\end{acknowledgment}

\appendix

\section{Bisections of Lie groupoids}
\label{app:bisections}

In this appendix, we will recall some key facts about bisections of
Lie groupoids, which provide the technical underpinning for several
concepts appearing elsewhere in this article.  Most notably, we derive
an expression for the tangent map of Lie groupoid multiplication.
This expression, which appeared in \citet{Xu1995}, is a crucial
ingredient in the proof of the \autoref{thm:variationalIsomorphism},
which we employed in the variational formulation of discrete
constrained Lagrangian mechanics.  We also apply this result to
present an alternative derivation of the multiplication map for the
cotangent groupoid.  (Our definitions largely follow those of
\citet[Chapter 15]{CaWe1999}.)

\subsection{Global and local bisections}
Let $ G \rightrightarrows Q $ be a Lie groupoid with source map $
\alpha \colon G \rightarrow Q $ and target map $ \beta \colon G
\rightarrow Q $.  A submanifold $ \mathcal{B} \subset G $ is called a
\emph{bisection} of $G$ if the restricted maps, $ \alpha \rvert _{
  \mathcal{B} } \colon \mathcal{B} \rightarrow Q $ and $ \beta \rvert
_{ \mathcal{B} } \colon \mathcal{B} \rightarrow Q $ are both
diffeomorphisms.  Consequently, for any bisection $\mathcal{B} \subset
G $, there is a corresponding $\alpha$-section $ \mathcal{B} _\alpha =
\left( \alpha \rvert _{ \mathcal{B} } \right) ^{-1} \colon Q
\rightarrow G $, where $ \beta \circ \mathcal{B} _{ \alpha } \colon Q
\rightarrow Q $ is a diffeomorphism.  (\citet{Mackenzie2005} defines a
bisection to be the map $ \mathcal{B} _\alpha $ having this property,
rather than its image $\mathcal{B}$; the definitions are equivalent.)
Likewise, there is a $\beta$-section $ \mathcal{B} _\beta = \left(
  \beta \rvert _{ \mathcal{B} } \right) ^{-1} \colon Q \rightarrow G
$, where $ \alpha \circ \mathcal{B} _\beta = \left( \beta \circ
  \mathcal{B} _\alpha \right) ^{-1} \colon Q \rightarrow Q $ is a
diffeomorphism.  Furthermore, each bisection $ \mathcal{B} \subset G $
defines a \emph{left action} $ \ell _{ \mathcal{B} } \colon G
\rightarrow G $ and a \emph{right action} $ r _{ \mathcal{B} } \colon
G \rightarrow G $ on the groupoid, given on any $ g \in G $ by
\begin{equation*}
  \ell _{ \mathcal{B} } (g) = \mathcal{B} _\beta \left( \alpha (g)
  \right) g, \qquad r _{ \mathcal{B} } (g) = g \mathcal{B} _\alpha
  \left( \beta (g) \right) .
\end{equation*}
Given two bisections $ \mathcal{B} _1 , \mathcal{B} _2 \subset G $,
one can show that the product $ \mathcal{B} _1 \mathcal{B} _2 = \ell
_{ \mathcal{B} _1 } \left( \mathcal{B} _2 \right) = r _{ \mathcal{B}
  _2 } \left( \mathcal{B} _1 \right) $ is again a bisection, and that
the bisections of $G$ in fact form a group.

More generally, $ B \subset G $ is called a \emph{local bisection} if
the restricted maps $ \alpha \rvert _B $ and $ \beta \rvert _B $ are
local diffeomorphisms onto open sets $ U , V \subset Q $,
respectively.  Analogously to the global case, there exists a local
$\alpha$-section $ B _\alpha \colon U \rightarrow B $ such that $
\beta \circ B _\alpha \colon U \rightarrow V $ is a diffeomorphism, as
well as a local $\beta$-section $ B _\beta \colon V \rightarrow B $
such that $ \alpha \circ B _\beta = \left( \beta \circ B _\alpha
\right) ^{-1} \colon V \rightarrow U $ is a diffeomorphism.  (As
before, an alternate but equivalent definition,
cf.~\citet{Mackenzie2005}, takes a local bisection to be the map $ B
_\alpha $, rather than $ B $ itself.)  Each local bisection $ B
\subset G $ defines a \emph{local left action} $ \ell _B \colon \alpha
^{-1} (V) \rightarrow \alpha ^{-1} (U) $ and a \emph{local right
  action} $ r _B \colon \beta ^{-1} (U) \rightarrow \beta ^{-1} (V) $,
given by
\begin{equation*}
  \ell _B = B _\beta \left( \alpha (g) \right) g , \qquad r _B (g) = g
  B _\alpha \left( \beta (g) \right) .
\end{equation*}
It is also possible to define multiplication of local bisections;
moreover, if $ i \colon G \rightarrow G $ denotes the inversion map on
$G$, then the inverse $ i (B) $ of a local bisection is again a local
bisection.  Thus, the local bisections of $G$ form an \emph{inverse
  semigroup}, which contains the group of global bisections as those
elements with $ U = V = Q $.

\subsection{The tangent map of Lie groupoid multiplication}

A key fact about Lie groupoids is that, for every $ g \in G $, there
exists a local bisection $ B \subset G $ such that $ g \in B $. (In
general, though, there may not exist any \emph{global} bisection
through $g$.)  Using this property, we may now state and prove the
following theorem on the tangent map of multiplication in $G$.

\begin{theorem}[\citet{Xu1995}]
  \label{thm:tangentMultiplication}
  Let $ G \rightrightarrows Q $ be a Lie groupoid, with multiplication
  map $ m \colon G _2 \rightarrow G $.  Suppose $ \left( g _1 , g _2
  \right) \in G _2 $ is a pair of composable elements, and denote $ q
  = \beta \left( g _1 \right) = \alpha \left( g _2 \right) $.  Then
  the tangent map $ T _{ \left( g _1 , g _2 \right) } m \colon T _{
    \left( g _1, g _2 \right) } G _2 \rightarrow T _{ g _1 g _2 } G $
  is given by
  \begin{equation*}
    T _{ \left( g _1 , g _2 \right) } m \left( v _{ g _1 }, v _{ g _2
      } \right) = T _{ g _1 } r _{ B _2 } \left( v _{ g _1 } \right) +
    T _{ g _2 } \ell _{ B _1 } \left( v _{ g _2 } \right) - T _q
    \left( \ell _{ B _1 } \circ r _{ B _2 } \circ \epsilon \right)
    \left( v _q \right) ,
  \end{equation*} 
  where $ B _1 $ and $ B _2 $ are local bisections such that $ g _1
  \in B _1 $, $ g _2 \in B _2 $, and where $ v _q = T _{ g _1 } \beta
  \left( v _{ g _1 } \right) = T _{ g _2 } \alpha \left( v _{ g _2 }
  \right) \in T _q Q $.
\end{theorem}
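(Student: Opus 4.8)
The plan is to split $(v_{g_1}, v_{g_2}) \in T_{(g_1,g_2)}G_2$ into a piece tangent to a curve built from the bisection $B_2$ and a piece that is $\alpha$-vertical in the second slot, and then to recognize the image of each piece under $T m$ as the derivative of a right, respectively left, translation.

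First I would record the bookkeeping identities that follow at once from the definitions: since $g_1 \in B_1$ and $\beta(g_1) = q$, the local $\beta$-section satisfies $(B_1)_\beta(q) = g_1$; since $g_2 \in B_2$ and $\alpha(g_2) = q$, the local $\alpha$-section satisfies $(B_2)_\alpha(q) = g_2$; and from the identity axiom $\epsilon(\alpha(g))\,g = g$ one obtains $(B_2)_\alpha = r_{B_2} \circ \epsilon$ on the domain of $(B_2)_\alpha$, which contains $q$ because $q = \alpha(g_2)$. These are what convert the two translation terms into the symmetric expression appearing in the statement.

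Next, writing $v_q = T_{g_1}\beta(v_{g_1}) = T_{g_2}\alpha(v_{g_2})$, I would use the decomposition
\[
  (v_{g_1}, v_{g_2}) = \bigl( v_{g_1},\ T_q (B_2)_\alpha (v_q) \bigr) + \bigl( 0,\ v_{g_2} - T_q (B_2)_\alpha (v_q) \bigr),
\]
in which both summands lie in $T_{(g_1,g_2)}G_2$. The first is the velocity at $t=0$ of $t \mapsto \bigl( c_1(t),\ (B_2)_\alpha(\beta(c_1(t))) \bigr)$ for any curve $c_1$ through $g_1$ with $\dot c_1(0) = v_{g_1}$; this curve stays in $G_2$ for $t$ near $0$, and applying $m$ gives $c_1(t)\,(B_2)_\alpha(\beta(c_1(t))) = r_{B_2}(c_1(t))$, so $Tm$ sends the first summand to $T_{g_1} r_{B_2}(v_{g_1})$. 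The second summand is $(0,u)$ with $u := v_{g_2} - T_q(B_2)_\alpha(v_q)$, which is $\alpha$-vertical at $g_2$ since $T\alpha(u) = v_q - v_q = 0$; realizing $u$ by a curve $c_2$ in $\alpha^{-1}(q)$ through $g_2$, we get $m(g_1,c_2(t)) = g_1 c_2(t) = (B_1)_\beta(q)\, c_2(t) = \ell_{B_1}(c_2(t))$, so $Tm$ sends the second summand to $T_{g_2}\ell_{B_1}(u) = T_{g_2}\ell_{B_1}(v_{g_2}) - T_q(\ell_{B_1} \circ (B_2)_\alpha)(v_q)$. Adding the two contributions and replacing $(B_2)_\alpha$ by $r_{B_2}\circ\epsilon$ in the last term yields exactly the claimed formula.

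There is no deep obstacle here; the point requiring care---the ``hard part,'' such as it is---is to check that every map occurring above is defined where it is applied, so that the two auxiliary curves genuinely take values in $G_2$ for small $t$ and the bisection identities of the first step hold verbatim. All of this reduces to the single fact that $q = \beta(g_1) = \alpha(g_2)$ lies in the domains associated with both $B_1$ and $B_2$, which is immediate from $g_1 \in B_1$ and $g_2 \in B_2$. With the domains fixed, the decomposition forces the result and no computation beyond the chain rule is needed.
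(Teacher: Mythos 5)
Your argument is correct, and it is in essence the paper's argument: both proofs hinge on choosing local bisections $B_1\ni g_1$, $B_2\ni g_2$ and using the identities $m\circ(\mathrm{id}_G,(B_2)_\alpha\circ\beta)=r_{B_2}$ and $g_1 h=\ell_{B_1}(h)$ on the $\alpha$-fiber through $q$, together with the chain rule. The only real difference is organizational: the paper decomposes $(v_{g_1},v_{g_2})$ into \emph{three} terms, $\left(v_{g_1},T_q(B_2)_\alpha(v_q)\right)+\left(T_q(B_1)_\beta(v_q),v_{g_2}\right)-\left(T_q(B_1)_\beta(v_q),T_q(B_2)_\alpha(v_q)\right)$, and pushes each forward separately, the third term producing the correction $T_q(\ell_{B_1}\circ r_{B_2}\circ\epsilon)(v_q)$ directly via $m\circ((B_1)_\beta,(B_2)_\alpha)=\ell_{B_1}\circ r_{B_2}\circ\epsilon$; you instead use a \emph{two}-term decomposition in which the residual piece $(0,\,v_{g_2}-T_q(B_2)_\alpha(v_q))$ is $\alpha$-vertical, push it forward by $\ell_{B_1}$ (which only uses the single element $g_1$ on the fiber), and recover the correction term by linearity together with the identity $(B_2)_\alpha=r_{B_2}\circ\epsilon$. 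Your version makes the verticality of the residual explicit and needs curves in the fiber (legitimate, since $\alpha$ is a submersion), while the paper's symmetric three-term split treats $v_{g_1}$ and $v_{g_2}$ on an equal footing and works purely with compositions of maps; both yield the stated formula, and your attention to domains (that $q$ lies in the relevant open sets for $B_1$, $B_2$) is exactly the point that needs checking.
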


\begin{proof}
  Denote by $ \left( B _2 \right) _\alpha $ and $ \left( B _1 \right)
  _\beta $, respectively, the local $\alpha$- and $\beta$-sections
  corresponding to $ B _2 $ and $ B _1 $.  This implies that $ g _2 =
  \left( B _2 \right) _\alpha (q) $ and $ g _1 = \left( B _1 \right)
  _\beta (q) $, so we decompose the tangent vector $ \left( v _{ g _1 }, v _{ g _2
    } \right) \in T _{ \left( g _1, g _2 \right) } G _2 $ as
  \begin{multline}
    \label{eqn:tmdecomp}
    \left( v _{ g _1 }, v _{ g _2 } \right) = \left( v _{ g _1 } , T
      _q \left( B _2 \right) _\alpha \left( v _q \right) \right) + ( T
    _q \left( B _1 \right)_ \beta \left( v _q \right) , v _{ g _2
    } ) \\
    - ( T _q \left( B _1 \right)_ \beta \left( v _q \right), T _q
    \left( B _2 \right) _\alpha \left( v _q \right) ) .
  \end{multline} 
  Applying the tangent map to each of these terms individually, we
  observe for the first term that
\begin{align*}
  T _{\left( g _1 , g _2 \right) }m \left( v _{ g _1 } , T _q \left( B
      _2 \right) _\alpha \left( v _q \right) \right) &= T _{ \left( g
      _1 , g _2 \right) } m \left( T _{ g _1 } \mathrm{id} _G \left( v
      _{ g _1 } \right) , T _{ g _1 } \left( \left( B _2 \right)
      _\alpha \circ \beta \right) \left( v _{ g _1 } \right)
  \right) \\
  &= T _{g _1} \left( m \circ \left( \mathrm{id} _G , \left( B _2
      \right) _\alpha \circ \beta \right) \right) \left( v _{ g _1 }
  \right).
\end{align*} 
Now, since
\begin{equation*}
  \left( m \circ \left( \mathrm{id} _G , \left( B _2 \right) _\alpha
      \circ \beta \right) \right) (g) = m \left( g, \left( B _2 \right)
    _\alpha \beta (g) \right) = g \left( B _2 \right) _\alpha \beta
  (g) = r _{ B _2 } (g) ,
\end{equation*} 
we conclude that 
\begin{equation}\label{eqn:tm1}
  T _{\left( g _1 , g _2 \right) }m \left( v _{ g _1 } , T _q \left( B
      _2 \right) _\alpha \left( v _q \right) \right) = T _{ g _1 } r
  _{ B _2 } \left( v _{ g _1 } \right) .
\end{equation} 
Similarly, applying the tangent map to the second term of
\eqref{eqn:tmdecomp}, it follows that
\begin{equation}\label{eqn:tm2}
  T _{\left( g _1 , g _2 \right) }m ( T
    _q \left( B _1 \right)_ \beta \left( v _q \right) , v _{ g _2
    } ) = T _{ g _2 } \ell _{ B _1 } \left( v _{ g _2 } \right) .
\end{equation}
On the other hand, for the third term of \eqref{eqn:tmdecomp}, we have
\begin{equation*}
  T _{\left( g _1 , g _2 \right) }m ( T _q \left( B _1 \right)_ \beta
  \left( v _q \right), T _q \left( B _2 \right) _\alpha \left( v _q
  \right) ) =   T _q \left( m \circ ( ( B _1 ) _\beta , (B _2 )
    _\alpha ) \right)  (v _q ) ,
\end{equation*}
and since
\begin{equation*}
  \left( m \circ ( ( B _1 ) _\beta , (B _2 ) _\alpha ) \right) (q) = 
  \left( B _1 \right) _\beta (q) \left( B _2 \right) _\alpha
  (q) = \left( \ell _{ B _1 } \circ r _{ B _2 } \circ \epsilon \right)
  (q) ,
\end{equation*} 
we deduce that
\begin{equation}\label{eqn:tm3}
  T _{\left( g _1 , g _2 \right) }m ( T _q \left( B _1 \right)_ \beta
  \left( v _q \right), T _q \left( B _2 \right) _\alpha \left( v _q
  \right) ) = T _q \left( \ell _{ B _1 } \circ r _{ B _2 } \circ
    \epsilon \right) \left( v _q \right) .
\end{equation} 
Finally, substituting the expressions \eqref{eqn:tm1},
\eqref{eqn:tm2}, \eqref{eqn:tm3} for the respective terms of
\eqref{eqn:tmdecomp} yields the result.
\end{proof}

\subsection{Multiplication in the cotangent groupoid}

Given a Lie groupoid $ G \rightrightarrows Q $ with Lie algebroid $ A
G \rightarrow Q $, we have already discussed the source and target
maps, respectively denoted $ \widetilde{ \alpha } $ and $ \widetilde{
  \beta } $, of the cotangent groupoid $ T ^\ast G \rightrightarrows A
^\ast G $.  In this subsection, we discuss the multiplication map $
\widetilde{ m } $, whose formulation depends on the properties of
bisections.  In particular, we derive an expression for this
multiplication map using \autoref{thm:tangentMultiplication}.

The multiplication map $ \widetilde{ m } $ on $ T ^\ast G $ can be
characterized by the following two conditions
(cf.~\citet{Mackenzie2005}):
\begin{enumerate}
\itemsep\topsep
\item if $ \left( \mu _{ g _1 }, \mu _{g _2 } \right) \in T ^\ast _{ g
    _1 } G \times T ^\ast _{ g _2 } G $ is any composable pair, i.e.,
  $ \widetilde{ \beta } \left( \mu _{ g _1 } \right) = \widetilde{
    \alpha } \left( \mu _{ g _2 } \right) $, then the product $
  \widetilde{ m } \left( \mu _{ g _1 }, \mu _{ g _2 } \right) $ lies
  in $ T ^\ast _{ g _1 g _2 } G $;

\item if $ \left( v _{ g _1 }, v _{ g _2 } \right) \in T _{ \left( g
      _1, g _2 \right) } G _2 $, then $
  \left\langle \widetilde{ m } \left( \mu _{ g _1 }, \mu _{ g _2 }
    \right) , T _{ \left( g _1, g _2 \right) } m \left( v _{ g _1 }, v
      _{ g _2 } \right) \right\rangle = \left\langle \mu _{ g _1 }, v
    _{ g _1 } \right\rangle + \left\langle \mu _{ g _2 }, v _{ g _2 }
  \right\rangle $. \label{condition:cotangentII}
\end{enumerate}
Using these properties, we can now obtain the following explicit
formula for the cotangent multiplication map.

\begin{theorem}
  Let $ \left( \mu _{ g _1 } , \mu _{ g _2 } \right) \in T ^\ast _{ g
    _1 } G \times T ^\ast _{ g _2 } G $ be a composable pair, with $
  \beta \left( g _1 \right) = \alpha \left( g _2 \right) = q \in Q $
  and $ \widetilde{ \beta } \left( \mu _{ g _1 } \right) = \widetilde{
    \alpha } \left( \mu _{ g _2 } \right) = \mu _q \in A ^\ast _q G $.
  Define the linear epimorphism
  \begin{equation*}
    \pi _q \colon T _{ \epsilon (q) } G \rightarrow A _q G , \quad  v
    _{ \epsilon (q) } \mapsto v _{ \epsilon (q) } - T _{ \epsilon (q)
    } \left( \epsilon \circ \alpha \right) \left( v _{ \epsilon (q) }
    \right) .
  \end{equation*} 
  Then the cotangent multiplication map $ \widetilde{ m } $ is given by
    \begin{multline*}
      \widetilde{ m } \left( \mu _{ g _1 } ,\mu _{ g _2 } \right) \\
      = T ^\ast _{ g _1 g _2 } \ell _{ i \left( B _1 \right) } \left(
        \mu _{ g _2 } \right) + T ^\ast _{ g _1 g _2 } r _{ i \left( B
          _2 \right) } \left( \mu _{ g _1 } \right) - \left( T ^\ast
        _{ g _1 g _2 } \left( \ell _{ i \left( B _1 \right) } \circ r
          _{ i \left( B _2 \right) } \right) \circ \pi _q ^\ast
      \right) \left( \mu _q \right),
    \end{multline*} 
    where $ B _1 $ and $ B _2 $ are local bisections of $G$ such that
    $ g _1 \in B _1 $, $ g _2 \in B _2 $.
\end{theorem}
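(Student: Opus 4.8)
The strategy is to verify that the proposed expression, which I will call $\nu = \nu(\mu_{g_1},\mu_{g_2})$, satisfies the two conditions (1) and (2) characterizing $\widetilde m$. These conditions pin $\widetilde m(\mu_{g_1},\mu_{g_2})$ down uniquely: since $m$ is a submersion, $T_{(g_1,g_2)}m$ is onto $T_{g_1g_2}G$, so a covector at $g_1g_2$ is determined by its pairings with all vectors $T_{(g_1,g_2)}m(v_{g_1},v_{g_2})$. Hence it suffices to prove (a) $\nu\in T^\ast_{g_1g_2}G$, and (b) $\langle\nu,T_{(g_1,g_2)}m(v_{g_1},v_{g_2})\rangle = \langle\mu_{g_1},v_{g_1}\rangle+\langle\mu_{g_2},v_{g_2}\rangle$ for all $(v_{g_1},v_{g_2})\in T_{(g_1,g_2)}G_2$. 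As a byproduct, this makes the independence of $\nu$ from the auxiliary bisections $B_1,B_2$ automatic.

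Part (a) is short: because $g_1\in B_1$ and $g_2\in B_2$, the local bisections $i(B_1),i(B_2)$ satisfy $\ell_{i(B_1)}(g_1g_2)=g_2$ and $r_{i(B_2)}(g_1g_2)=g_1$, hence $(\ell_{i(B_1)}\circ r_{i(B_2)})(g_1g_2)=\epsilon(q)$. Therefore the three cotangent lifts in $\nu$ carry $T^\ast_{g_2}G$, $T^\ast_{g_1}G$, and $T^\ast_{\epsilon(q)}G$, respectively, into $T^\ast_{g_1g_2}G$, and since $\pi^\ast_q\mu_q\in T^\ast_{\epsilon(q)}G$ we get $\nu\in T^\ast_{g_1g_2}G$.

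For part (b) I substitute the formula for $T_{(g_1,g_2)}m$ from \autoref{thm:tangentMultiplication}, which writes it as the sum of $T_{g_1}r_{B_2}(v_{g_1})$, $T_{g_2}\ell_{B_1}(v_{g_2})$, and $-T_q(\ell_{B_1}\circ r_{B_2}\circ\epsilon)(v_q)$, where $v_q=T\beta(v_{g_1})=T\alpha(v_{g_2})$. As in the proof of that theorem (cf.\ the decomposition \eqref{eqn:tmdecomp}), $T_{(g_1,g_2)}G_2$ is spanned by three families of ``building-block'' vectors on which $T_{(g_1,g_2)}m$ reduces to exactly one of these three terms; since both sides of the identity in (b) are linear in $(v_{g_1},v_{g_2})$, it is enough to check it on each family. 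For each, I transfer $\nu$ across the tangent map by taking adjoints, then collapse the resulting composition of (local) bisection actions using the inverse-semigroup identities $\ell_{i(B_1)}\circ\ell_{B_1}=\mathrm{id}$, $r_{i(B_2)}\circ r_{B_2}=\mathrm{id}$ (locally) and the commutativity $\ell_B\circ r_{B'}=r_{B'}\circ\ell_B$. On the family corresponding to $-T_q(\ell_{B_1}\circ r_{B_2}\circ\epsilon)(v_q)$ everything collapses to $\epsilon$, and the correction term contributes $T^\ast_q\epsilon(\pi^\ast_q\mu_q)=0$ because $\pi_q$ annihilates $\operatorname{im}T_q\epsilon$; the remaining two terms give exactly $\langle\mu_{g_1},T_q(B_1)_\beta(v_q)\rangle+\langle\mu_{g_2},T_q(B_2)_\alpha(v_q)\rangle$, as required. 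On the other two families the collapse leaves the $\pi_q$-term as a contribution at $\epsilon(q)$, and the crux is the identity $\overrightarrow{\pi_q(v)}(g_2)=T_{\epsilon(q)}r_{B_2}(v)-T_q(B_2)_\alpha(T_{\epsilon(q)}\beta(v))$ for $v\in T_{\epsilon(q)}G$ (and its left-handed analogue for $\overleftarrow{\pi_q(v)}(g_1)$), which I derive from the definition of $\pi_q$, the relation $r_{B_2}\circ\epsilon=(B_2)_\alpha$, and the tangent-inversion formula \eqref{eqn:tangentInversion}. Substituting this and invoking the hypothesis $\widetilde\alpha(\mu_{g_2})=\widetilde\beta(\mu_{g_1})=\mu_q$---i.e., $\langle\mu_q,w\rangle=\langle\mu_{g_2},\overrightarrow w(g_2)\rangle=\langle\mu_{g_1},\overleftarrow w(g_1)\rangle$ for $w\in A_qG$---yields the desired equality on each family, and hence (b).

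The step I expect to be the main obstacle is the bookkeeping in (b): tracking the nine pairings produced by feeding the three-term formula for $T_{(g_1,g_2)}m$ into the three-term expression $\nu$, and, above all, seeing why $\pi_q$---rather than the identity on $T_{\epsilon(q)}G$, or the projection along $\epsilon\circ\beta$---is the correct normalization. The conceptual key is that $\pi_q$ is exactly the projection of $T_{\epsilon(q)}G=A_qG\oplus\operatorname{im}T_q\epsilon$ onto $A_qG$ along $\operatorname{im}T_q\epsilon$, so $\pi^\ast_q$ embeds $A^\ast_qG$ as the annihilator of $\operatorname{im}T_q\epsilon$; this is precisely what forces the $\epsilon$-term to vanish in one family, and what (via \eqref{eqn:tangentInversion}) absorbs the identity-section discrepancy in the other two. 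Once that is recognized, the remaining manipulations are routine applications of the chain rule together with the defining properties of $\widetilde\alpha$, $\widetilde\beta$, and bisections.
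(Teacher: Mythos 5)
Your proposal is correct and follows essentially the same route as the paper: both verify the characterizing condition (ii) (uniqueness coming from $m$ being a submersion, so $T_{(g_1,g_2)}m$ is onto) by feeding the three-term formula of \autoref{thm:tangentMultiplication} into the candidate covector and collapsing via the bisection-action identities, the tangent-inversion formula \eqref{eqn:tangentInversion}, and the defining relations for $\widetilde{\alpha}$ and $\widetilde{\beta}$; your ``crux identity'' for $\overrightarrow{\pi_q(v)}(g_2)$ is precisely the manipulation carried out in the paper's middle display. The only difference is organizational---you check the pairing termwise on the spanning families of the decomposition \eqref{eqn:tmdecomp}, whereas the paper runs a single chained computation starting from $\left\langle \mu_{g_1}, v_{g_1} \right\rangle + \left\langle \mu_{g_2}, v_{g_2} \right\rangle$---so it is not a genuinely different argument.
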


\begin{proof}
  First, we calculate
  \begin{align*}
    \left\langle \mu _{ g _2 } , v _{ g _2 } \right\rangle &=
    \left\langle \mu _{ g _2 } , T _{ g _2 } \left( \ell _{ i \left( B
            _1 \right) } \circ \ell _{ B _1 } \right) \left( v _{ g _2
        } \right) \right\rangle \\
    &= \left\langle T ^\ast _{ g _1 g _2 } \ell _{ i \left( B _1
        \right) } \left( \mu _{ g _2 } \right) , T _{ g _2 } \ell _{ B
        _1 } \left(
        v _{ g _2 } \right) \right\rangle \\
    &=\!
    \begin{multlined}[t]
      \langle T ^\ast _{ g _1 g _2 } \ell _{ i \left( B _1 \right) }
      \left( \mu _{ g _2 } \right) , T _{ \left( g _1, g _2 \right) }
      m \left( v _{ g _1 } , v _{ g _2 } \right) - T
      _{ g _1 } r _{ B _2 } \left( v _{ g _1 } \right) \\
      + T _q \left( \ell _{ B _1 } \circ r _{ B _2 } \circ \epsilon
      \right) \left( v _q \right) \rangle ,
    \end{multlined}
  \end{align*}
  where the last equality is obtained by applying
  \autoref{thm:tangentMultiplication}.  Observe that
  \begin{multline*}
    \left\langle T ^\ast _{ g _1 g _2 } \ell _{ i \left( B _1 \right)
      } \left( \mu _{ g _2 } \right), T _{ g _1 } r _{ B _2 } \left( v
        _{ g _1 } \right) - T _q \left( \ell _{ B _1 } \circ r _{ B _2
        } \circ \epsilon \right) \left( v _q \right) \right\rangle \\
    \begin{aligned}
      &= \left\langle \mu _{ g _2 } , T _{ g _1 } \left( \ell _{ i
            \left( B _1 \right) } \circ r _{ B _2 } \right) \left( v
          _{ g _1 } \right) - T _q \left( r _{ B _2 } \circ \epsilon
        \right) \left( v _q \right) \right\rangle \\
      &= \left\langle \mu _{ g _2 } , T _{ \epsilon (q) } r _{ B _2 }
        \left( T _{ g _1 } \ell _{ i \left( B _1 \right) } \left( v _{
              g _1 } \right) - T _{ \epsilon (q) } \left( \epsilon
            \circ \beta \right) \left( T _{ g _1 } \ell _{ i \left( B
                _1 \right) } \left( v _{ g _1 } \right) \right)
        \right) \right\rangle \\
      &= \left\langle \mu _{ g _2 } , - T _{ \epsilon (q) } \left( r
          _{ B _2 } \circ i \right) \left( \pi _q \left( T _{ g _1 }
            \ell _{ i \left( B _1 \right) } \left( v _{ g _1 } \right)
          \right) \right) \right\rangle \\
      &= \left\langle \pi _q ^\ast \widetilde{ \alpha } \left( \mu _{
            g _2 } \right) , T _{ g _1 } \ell _{ i \left( B _1 \right)
        } \left( v _{ g _1 } \right) \right\rangle ,
    \end{aligned}
  \end{multline*}
  so altogether, we have
  \begin{multline*}
    \left\langle \mu _{ g _2 } , v _{ g _2 } \right\rangle =
    \left\langle T ^\ast _{ g _1 g _2 } \ell _{ i \left( B _1 \right)
      } \left( \mu _{ g _2 } \right) , T _{ \left( g _1, g _2 \right)
      } m \left( v _{ g _1 } , v _{ g _2 } \right)
    \right\rangle \\
    - \left\langle \pi _q ^\ast \left( \mu _q \right) , T _{ g _1 }
      \ell _{ i \left( B _1 \right)} \left( v _{ g _1 } \right)
    \right\rangle .
  \end{multline*} 
Following essentially the same procedure, one also obtains
\begin{multline*}
  \left\langle \mu _{ g _1 } , v _{ g _1 } \right\rangle =
  \left\langle T ^\ast _{ g _1 g _2 } r _{ i \left( B _2 \right) }
    \left( \mu _{ g _1 } \right) , T _{ \left( g _1, g _2 \right) }
    m \left( v _{ g _1 } , v _{ g _2 } \right)
  \right\rangle \\
  - \left\langle \pi _q ^\ast \left( \mu _q \right) , T _{ g _2 } r
    _{ i \left( B _2 \right)} \left( v _{ g _2 } \right)
  \right\rangle ,
\end{multline*} 
so adding these together,
\begin{multline*}
  \left\langle \mu _{ g _1 } , v _{ g _1 } \right\rangle +
  \left\langle \mu _{ g _2 } , v _{ g _2 } \right\rangle \\
  = \left\langle T ^\ast _{ g _1 g _2 } \ell _{ i \left( B _1 \right)
    } \left( \mu _{ g _2 } \right) + T ^\ast _{ g _1 g _2 } r _{ i
      \left( B _2 \right) } \left( \mu _{ g _1 } \right) , T _{ \left(
        g _1, g _2 \right) } m \left( v _{ g _1 } , v _{ g _2 }
    \right) \right\rangle \\
  - \left\langle \pi _q ^\ast \left( \mu _q \right) , T _{ g _1 } \ell
    _{ i \left( B _1 \right) } \left( v _{ g _1 } \right) + T _{ g _2
    } r _{ i \left( B _2 \right)} \left( v _{ g _2 } \right)
  \right\rangle .
\end{multline*}
However, for this last term, we can use
\autoref{thm:tangentMultiplication} again to write
\begin{multline*}
  T _{ g _1 } \ell _{ i \left( B _1 \right) } \left( v _{ g _1 }
  \right) + T _{ g _2 } r _{ i \left( B _2 \right)} \left( v _{ g _2 }
  \right) \\
  \begin{aligned}
    &= T _{ g _1 g _2 } \left( \ell _{ i \left( B _1 \right) } \circ r
      _{ i \left( B _2 \right) } \right) \left( T _{ g _1 } r _{ B _2
      } \left( v _1 \right) + T _{ g _2 }
      \ell _{ B _1 } \left( v _2 \right) \right) \\
    &= T _{ g _1 g _2 } \left( \ell _{ i \left( B _1 \right) } \circ r
      _{ i \left( B _2 \right) } \right) \left( 
      T _{ \left( g _1, g _2 \right) } m \left( v _{ g _1 } , v _{ g
          _2 } \right) + T _q \left( \ell _{ B _1 } \circ r _{ B _2 }
        \circ \epsilon \right) \left( v _q \right) \right) \\
    &= T _{ g _1 g _2 } \left( \ell _{ i \left( B _1 \right) } \circ r
      _{ i \left( B _2 \right) } \right) \left( 
      T _{ \left( g _1, g _2 \right) } m \left( v _{ g _1 } , v _{ g
          _2 } \right) \right) + T _q \epsilon  \left( v _q \right) .
  \end{aligned}
\end{multline*}
But since 
\begin{equation*}
  \left( \pi _q \circ T _q \epsilon \right) \left( v _q
  \right) = T _q \epsilon \left( v _q \right) - T _q\left( \epsilon \circ
    \alpha \circ \epsilon \right) \left( v _q \right) = T _q \epsilon
  \left( v _q \right) - T _q \epsilon \left( v _q \right) = 0 ,
\end{equation*} 
the $ T _q \epsilon $ term vanishes.  Finally, we are left with
\begin{multline*}
  \left\langle \mu _{ g _1 } , v _{ g _1 } \right\rangle +
  \left\langle \mu _{ g _2 } , v _{ g _2 } \right\rangle 
  = \bigl\langle T ^\ast _{ g _1 g _2 } \ell _{ i \left( B _1 \right)
    } \left( \mu _{ g _2 } \right) + T ^\ast _{ g _1 g _2 } r _{ i
      \left( B _2 \right) } \left( \mu _{ g _1 } \right) \\ 
  - \left( T ^\ast _{ g _1 g _2 } \left( \ell _{ i \left(
            B _1 \right) } \circ r _{ i \left( B _2 \right) } \right)
      \circ \pi _q ^\ast \right) \left( \mu _q \right) , T _{ \left(
        g _1, g _2 \right) } m \left( v _{ g _1 } , v _{ g _2 }
    \right) \bigr\rangle ,
\end{multline*} 
and since this equals $ \left\langle \widetilde{ m } \left( \mu _{ g
      _1 }, \mu _{ g _2 } \right) , T _{ \left( g _1, g _2 \right) } m
  \left( v _{ g _1 } , v _{ g _2 } \right) \right\rangle $, by
condition \hyperref[condition:cotangentII]{(ii)}, above, this
completes the proof.
\end{proof}

\end{document}